\newtheorem{theorem}{Theorem}
\newtheorem*{rep@theorem}{\rep@title}
\newcommand{\newreptheorem}[2]{%
\newenvironment{rep#1}[1]{%
 \def\rep@title{#2 \ref{##1}}%
 \begin{rep@theorem}}%
 {\end{rep@theorem}}}
\newtheorem{conj}[theorem]{Conjecture}
\newtheorem{lemma}[theorem]{Lemma}
\theoremstyle{remark}
\newtheorem{remark}{Remark}
\newcommand{\dist}{\text{dist }}
\newcommand{\Q}{B}
\newcommand{\R}{\mathbb{R}}
\newcommand{\N}{\mathbb{N}}
\newcommand{\C}{\mathbb{C}}
\newcommand{\vn}[1]{\lVert#1\rVert}
\newcommand{\SL}{\mathcal{L}}
\newcommand{\cn}{\operatorname{cn}}
\newcommand{\sn}{\operatorname{sn}}
\newcommand{\dn}{\operatorname{dn}}
\newcommand{\AM}{\operatorname{AM}}
\newcommand{\rot}{\text{rot }}
\renewcommand{\k}{\ensuremath{\kappa}}
\newcommand{\E}{\mathcal{E}}
\newcommand{\myL}{\mathcal{L}}
\newcommand{\siE}{\hat{\mathcal{E}}}
\newcommand{\mykappa}{u}
\newcommand{\myu}{\rho}
\newcommand{\mya}{{b}}
\newcommand{\normal}{{\nu}}
\newcommand{\tangent}{{\tau}}
\renewcommand{\SS}{\mathcal{S}}
\newcommand{\SX}{\mathcal{X}}
\newcommand{\gamgrad}{\Gamma}
\title{On the basin of attraction for the free boundary free elastic flow}
\author[K.~Deckelnick]{Klaus Deckelnick}
\address{Institut f\"ur Analysis und Numerik,
Otto-von-Guericke-Universit\"at Magdeburg, Postfach 4120, 39016 Magdeburg, Germany}
\email{klaus.deckelnick@ovgu.de}
\author[H.-Ch.~Grunau]{Hans-Christoph Grunau}
\address{Institut f\"ur Analysis und Numerik,
Otto-von-Guericke-Universit\"at Magdeburg, Postfach 4120, 39016 Magdeburg, Germany}
\email{hans-christoph.grunau@ovgu.de}
\author[R.~N\"urnberg]{Robert N\"urnberg}
\address{Dipartimento di Mathematica, Universit\`a di Trento,
38123 Trento, Italy}
\email{robert.nurnberg@unitn.it}
\author[G.~Wheeler]{Glen Wheeler}
\address{Institute for Mathematics and its Applications, University of
Wollongong, Northfields Avenue, Wollongong, NSW, 2522, Australia}
\email{glenw@uow.edu.au}
\author[V.-M.~Wheeler]{Valentina-Mira Wheeler}
\address{Institute for Mathematics and its Applications, University of
Wollongong, Northfields Avenue, Wollongong, NSW, 2522, Australia}
\email{vwheeler@uow.edu.au}
\date{\today}
\begin{document}

\begin{abstract}
The free boundary free elastic flow is the steepest descent gradient flow for the elastic energy of curves meeting parallel lines perpendicularly.
In this article we prove that the {straight} line has, measured in {Euler's} scale-invariant {bending} energy, a basin of attraction at least to the level $1.9615\, \pi$.
We show that {our} method of proof cannot be pushed to the {previously} conjectured level $2\pi$, and in addition present numerical evidence that this conjecture may in fact {be false}.
\end{abstract}

\maketitle

\section{Introduction}

To each smooth immersed curve $\gamma:[-1,1]\to\mathbb{R}^2$ we assign $\E[\gamma]$ and $\siE[\gamma]$, Euler's bending (or elastic) energy and Euler's scale-invariant bending energy respectively, defined as follows:
\[
\E[\gamma] := \frac12\int_{-1}^{1} \k^2\,ds\,,\quad\text{and}\quad
\siE[\gamma] := \myL[\gamma]\E[\gamma] = \frac12\int_{-1}^{1} \,ds\, \int_{-1}^{1} \k^2\,ds\,.
\]
Above, $s(\myu) = \int_{-1}^\myu |\gamma'(\myu)|\,d\myu$ is the arclength (with 
$ds = |\gamma'(\myu)|\,d\myu$ the arclength measure and $\partial_s = \frac{1}{|\gamma'(\myu)|}\, \partial_{\myu}$ the arclength derivative), $\myL[\gamma]$ is the length of $\gamma$, and $\k = \partial^2_s\gamma\cdot \normal$ is the scalar curvature.
We use $\normal$ and $\tangent$ for the normal and tangent vectors respectively.
We often use subscripts to denote differentiation.

Set $\eta_{\pm1}:\R\to\R^2$ to be parallel vertical lines with $\eta_{\pm1}(\myu) = (\pm1,\myu)$.\footnote{This choice of supporting lines is not essential; see Lemma \ref{lem:scaling}, {below}.}
The sets $\eta_{\pm1}(\R) = \{\eta_{\pm1}(\myu)\,:\,\myu\in\R\}$ are the admissible spatial location of the endpoints of $\gamma$.
We impose further that the tangent vector of $\gamma$ at the endpoints is horizontal, and that the derivative of the scalar curvature vanishes.
Under these boundary conditions, we  consider the steepest descent $L^2(ds)$-gradient flow for $\E$.
Formally this evolution equation is posed on the space  
\[
\SX = \{\gamma:[-1,1]\to\R^2\,:\,
    \gamma\text{ smooth, immersed},\ 
    \gamma(\pm1) \in \eta_{\pm1}(\R),\ 
    \normal(\pm1)\cdot e_1 = 0,\ 
    \k_s(\pm1) = 0
    \}
\,.
\]
This is the \emph{free boundary free elastic flow}:
a one-parameter family $\gamma:[-1,1]\times[0,T)\to\R^2$ satisfying
\begin{equation}
\label{FEF}
\tag{FEF}
\begin{cases}
{\partial_t\gamma} = -\left( \k_{ss} + \frac12\k^3\right)\normal&\text{ in }(-1,1)\times(0,T)\,,\hspace{-1cm}
\\
\gamma(\cdot,t) \in \SX&\text{ for all }t\in[0,T)
\,.
\end{cases}
\end{equation}

\begin{figure}[t]
\centering
\begin{tikzpicture}[scale=0.8]

\begin{scope}[xshift=0cm]
  \draw (0,1.5) -- (0,5.5);
  \draw (4,1.5) -- (4,5.5);
  \node[above] at (0,5.5) {\small $\eta_{-1}$};
  \node[above] at (4,5.5) {\small $\eta_{+1}$};

  \draw[thick]
    (0,2.0)
      to[out=0,in=180]   (1.0,2.0)
      to[out=0,in=180]   (3.0,5.0)
      to[out=0,in=180]   (4.0,5.0);

  \node at (1.6,4.4) {$\gamma(\cdot,0)$};
\end{scope}

\draw[densely dotted,->] (5.25,3.5) -- (7.0,3.5);
  \node at (6.0,3.85) {$t\to\infty$};

\begin{scope}[xshift=8cm]
  \draw (0,1.5) -- (0,5.5);
  \draw (4,1.5) -- (4,5.5);
  \node[above] at (0,5.5) {\small $\eta_{-1}$};
  \node[above] at (4,5.5) {\small $\eta_{+1}$};

  \draw[thick] (0,3.5) -- (4,3.5);

  \node at (2.1,4.4) {$\gamma(\cdot,\infty)$};
\end{scope}

\end{tikzpicture}
\caption{A visualisation of the statement of Theorem \ref{TMmain}. Initial data $\gamma_0=\gamma(\cdot,0)$ satisfying \eqref{EQmtic} are driven by the flow to the horizontal line $\gamma(\cdot,\infty) := \lim_{t\to\infty}\gamma(\cdot,t)$.}
\end{figure}
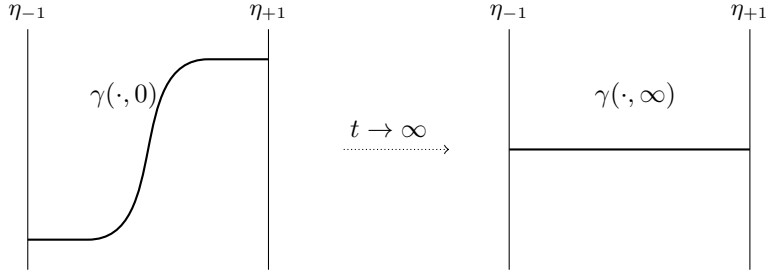

The main result of this paper is the following theorem.

\begin{theorem}
\label{TMmain}
The free boundary free elastic flow
$\gamma:[-1,1]\times[0,T)\to\R^2$ with initial data {$\gamma(\cdot,0)=\gamma_0\in\SX$} satisfying
\begin{equation}
\siE[\gamma_0] \le 1.9615\ \pi
\label{EQmtic}
\end{equation}
exists for all time, converging exponentially fast in the smooth topology as $t\to\infty$ to a straight line.
\end{theorem}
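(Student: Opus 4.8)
The plan is a continuity argument in time, resting on a sharp scale-invariant geometric threshold that prevents the evolving curve from ever developing a vertical tangent, together with higher-order a priori estimates that upgrade graphicality to global existence and exponential convergence. As a first, routine step I would record the basic energy identity: differentiating $\E$ along \eqref{FEF} and integrating by parts twice, the boundary terms produced at $\myu=\pm1$ all vanish on account of the structural conditions $\k_s(\pm1)=0$ and $\normal(\pm1)\cdot e_1=0$ encoded in $\SX$, leaving
\[
\frac{d}{dt}\,\E[\gamma(\cdot,t)] = -\int_{-1}^{1}\Bigl(\k_{ss}+\tfrac12\k^3\Bigr)^2\,ds \le 0,
\]
so that $\E[\gamma(\cdot,t)]\le\E[\gamma_0]$ and the total dissipation $\int_0^T\!\int(\k_{ss}+\tfrac12\k^3)^2\,ds\,dt$ is bounded by $\E[\gamma_0]$ uniformly in $T$.

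The heart of the matter, and the step I expect to be the main obstacle, is the geometric threshold: every admissible $\gamma\in\SX$ that fails to be a graph over the $e_1$-axis --- equivalently, whose tangent angle $\theta$ (with $\k=\theta_s$) attains $\pm\tfrac{\pi}{2}$ somewhere --- must satisfy $\siE[\gamma]\ge c^\ast\,\pi$ for a universal constant $c^\ast\ge1.9615$. Since this statement is scale invariant, by the scaling Lemma~\ref{lem:scaling} it suffices to analyse it at a fixed line separation, and I would attack it variationally: minimise $\siE$ subject to the boundary conditions together with the side constraint $\theta(s_0)=\tfrac{\pi}{2}$ at some interior point. The Euler--Lagrange equation for the minimisers is the elastica equation $\k_{ss}+\tfrac12\k^3=\lambda\k$, whose solutions are expressible through the Jacobi elliptic functions $\sn,\cn,\dn$ and the arithmetic--geometric mean $\AM$, and the threshold $c^\ast$ then emerges from an explicit evaluation of $\myL\,\E$ on this family of critical profiles. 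The naive estimate $\bigl(\int|\k|\,ds\bigr)^2\le 2\,\siE$ from Cauchy--Schwarz only forces graphicality below $\siE=\tfrac{\pi^2}{8}$ and is far too crude; the difficulty is to extract the \emph{sharp} constant, and the gap between the provable $1.9615\,\pi$ and the conjectured $2\pi$ is precisely the slack remaining in this optimisation.

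With the threshold in hand I would run the continuity argument. Let $[0,T^\ast)$ be the maximal interval on which $\gamma(\cdot,t)$ is a graph with $\siE<c^\ast\pi$. On this interval the graph representation gives uniform control of the geometry, and I would propagate the subcritical bound by combining the monotonicity of $\E$ with a bound on the length $\myL$; here one must be careful, since
\[
\frac{d}{dt}\myL=-\int_{-1}^{1}\k_s^2\,ds+\tfrac12\int_{-1}^{1}\k^4\,ds
\]
is not signed, so $\siE=\myL\,\E$ need not be monotone and the length must be estimated through the graph structure and the curvature bounds rather than by a cheap monotonicity. Standard interpolation and energy estimates in the style of Dziuk--Kuwert--Sch\"atzle then yield uniform bounds on $\k$ and all its arclength derivatives on $[0,T^\ast)$, which both rule out finite-time singularities and show that the subcritical-graph condition persists; hence $T^\ast=\infty$.

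Finally, for convergence I would use that $\E$ decreases to a limit while the dissipation is integrable in time, so that $\partial_t\gamma\to0$ and, along a subsequence, $\gamma(\cdot,t_k)$ converges smoothly to a critical point of $\E$ in $\SX$. Among subcritical graphs the only such critical point is the straight line $\k\equiv0$, so the limit is the line; full convergence and the exponential rate would then follow either from a \L{}ojasiewicz--Simon gradient inequality for the analytic energy $\E$, or, more directly, by linearising \eqref{FEF} about the line and exploiting the spectral gap of the resulting fourth-order operator under the boundary conditions defining $\SX$.
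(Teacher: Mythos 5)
There is a genuine gap, and it sits at the exact centre of your argument: the claimed geometric threshold is false. You assert that any $\gamma\in\SX$ whose tangent angle attains $\pm\tfrac\pi2$ must satisfy $\siE[\gamma]\ge c^\ast\pi$ with $c^\ast\ge 1.9615$. But the infimum of $\siE$ over curves in $\SX$ with a vertical tangent is $\tfrac{\pi^2}{2}\approx 1.5708\,\pi$, strictly below $1.9615\,\pi$: take the tangent angle $\theta$ (with $\k=\theta_s$) to be a smoothed tent profile going $0\to\tfrac\pi2\to 0$ on $[0,L]$; the boundary conditions force total turning variation at least $\pi$, Cauchy--Schwarz gives $\pi^2\le L\int\k^2\,ds=2\,\siE$, and the near-piecewise-linear $\theta$ (with $|\theta_s|$ essentially constant) saturates this bound, while the spanning condition $\gamma(\pm1)\in\eta_{\pm1}(\R)$ costs nothing since $\siE$ is scale invariant and $\int\cos\theta\,ds>0$ for this profile. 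So subcritical curves in the sense of \eqref{EQmtic} can fail to be graphs, your continuity argument cannot get started at the stated threshold, and your variational optimisation over elastica profiles would return $\tfrac{\pi^2}{2}$, not $1.9615\,\pi$. The constant $1.9615$ has nothing to do with graphicality: it is chosen so that $1.9615\,\pi<1/C_0$, where $C_0=0.162278$ is the (numerically certified) constant in the sharp interpolation inequality of Theorem \ref{TMcritineqintro}, $\int u^4\,ds\le C_0 L\bigl(\int u^2\,ds\bigr)\bigl(\int (u')^2\,ds\bigr)$ for mean-zero $u$ with Neumann conditions --- an inequality your proposal never invokes.

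A second, related defect: even if graphicality held, it gives no bound on $\myL[\gamma(\cdot,t)]$ (a graph spanning the strip can be arbitrarily long if steep), and you correctly note that $\tfrac{d}{dt}\myL$ is not signed but then leave the length bound to hand-waving. In the paper the length bound \emph{is} the crux: applying Theorem \ref{TMcritineqintro} with $u=\k$ (legitimate since \eqref{EQmtic} forces turning number zero, hence $\int\k\,ds=0$) yields
\begin{equation*}
\frac{d}{dt}\myL[\gamma(\cdot,t)] \le -\bigl(1-C_0\,\siE[\gamma(\cdot,t)]\bigr)\int\k_s^2\,ds \le 0,
\end{equation*}
which makes both $\myL$ and $\siE$ monotone and so preserves \eqref{EQmtic}; global existence is in fact unconditional (Theorem \ref{TMlte}), so no graph-based continuation is needed at all. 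With the length bound, Theorem \ref{TMsubconv} gives smooth subconvergence to an equilibrium, the energy level below $2\pi$ excludes all rectangular elastica, and once $\siE[\gamma(\cdot,t_N)]\le\pi$ the cited result \cite[Theorem 1.4]{WW24} supplies the full exponential convergence --- playing the role you assigned to a \L ojasiewicz--Simon inequality. Your opening energy identity and your endgame (subconvergence to a critical point, then a spectral/gradient-inequality argument) are sound in outline, but without the interpolation inequality replacing the false graphicality lemma, the proof does not close.
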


Note that any curve in $\SX$ satisfying \eqref{EQmtic} must have turning number $\omega$ zero, since $\omega^2 = \left(\frac1{2\pi}\int \k\,ds\right)^2\le \frac1{2\pi^2}\siE<0.3122<\frac12$, and curves in $\SX$ have turning number  quantised at multiples of $\frac12$.

In \cite{WW24}, Theorem \ref{TMmain} was established with the initial condition \eqref{EQmtic} replaced by $\siE[\gamma_0] \le \frac\pi2$, and conjectured to hold with $\siE[\gamma_0] < 2\pi$.
This is motivated by the fact that the space of stationary solutions $\SS$ consists precisely of:
\begin{enumerate}
\item the horizontal line with scale-invariant energy $\siE$ equal to zero;
\item $m$ half-periods of Euler's rectangular elastica (see Figure \ref{figrect}), with scale-invariant energy $\siE$ equal to $2m\pi$,
\end{enumerate}
and their images under vertical translation.
In terms of the functional $\siE$, the next threshold after zero is precisely $2\pi$.
In order to see this, it suffices to solve the Euler-Lagrange equation $\k_{ss} + \tfrac12\k^3=0$ for $\k$, and then determine which candidates satisfy the boundary conditions. 
This is classical, following Euler's famous investigation.
For a historical account, we refer the interested reader to \cite{Levien}.

The flow \eqref{FEF} is not a gradient flow for $\siE$.
The key difficulty is to establish monotonicity of $t\mapsto\siE[\gamma(\cdot,t)]$.
The time derivative of $\siE$ is
\begin{equation}
\frac{d}{dt}\siE[\gamma(\cdot,t)]
=  -\E[(\gamma(\cdot,t)]\left(\int \k_s^2\,ds
- \frac12\int \k^4\,ds\right)
  -\myL[(\gamma(\cdot,t)]\int \left(\k_{ss} + \frac12\k^3\right)^2\,ds 
  \,,
\label{EQsied}
\end{equation}
obtained by combining \eqref{EQenergy} and \eqref{EQlengthevo}, {below}.
As in \cite{WW24} we focus on the first term, specifically the quantity $\int \k_s^2\,ds
- \frac12\int \k^4\,ds$, which is the derivative of  $t\mapsto-\myL[\gamma(\cdot,t)]$.
A sufficient condition guaranteeing monotonicity of $\siE[(\gamma(\cdot,t)]$ is that this term is non-negative, that is
\begin{equation}
\int \k^4\,ds \le 2\int \k_s^2\,ds
\,.
\label{EQsiesuff}
\end{equation}
Our goal is thus to discover conditions on $\siE[\gamma_0]$ that imply \eqref{EQsiesuff}.
(This will then be preserved by the flow due to the monotonicity so obtained.)
To this end we prove the following sharp interpolation inequality, which is of independent interest.

\begin{theorem}
\label{TMcritineqintro}
Let $L>0$. Then for any sufficiently smooth function $u:[0,L]\to \mathbb{R}$ satisfying $u'(0)=u'(L)=0$ and
\[
\int^L_0 u(s)\, ds=0
\]
one has
\begin{equation}
\label{eq:1.1}
\int^L_0 u^4(s)\, ds \le C_0\cdot L  \left( \int^L_0 u^2(s)\, ds \right)  
\left( \int^L_0 (u'(s))^2\, ds \right)
\end{equation}	
with $C_0=0.162278$. 
\end{theorem}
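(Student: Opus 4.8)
\emph{Proof plan.} The inequality is invariant under the scalings $u\mapsto cu$ and $s\mapsto\lambda s$ (rescaling the domain), so it suffices to identify the sharp constant
\[
C_0 = \sup\left\{ \frac{\int_0^L u^4\,ds}{L\left(\int_0^L u^2\,ds\right)\left(\int_0^L (u')^2\,ds\right)}\ :\ u'(0)=u'(L)=0,\ \int_0^L u\,ds=0,\ u\not\equiv 0\right\}
\]
and to show it equals $0.162278$. A standard Gagliardo--Nirenberg estimate bounds the quotient, and the direct method in $H^1(0,L)$ (normalising $\int u^2=\int(u')^2=1$, then using weak compactness, the compact embedding $H^1\hookrightarrow L^4$, and lower semicontinuity of the Dirichlet energy) produces a maximiser $u$; the mean-zero and Neumann conditions are stable under the limit, so the supremum is attained.

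Next I would derive the Euler--Lagrange equation. Introducing multipliers $\lambda,\mu$ for $\int u^2,\int(u')^2$ and $\nu$ for the mean-zero constraint, the maximiser satisfies, after integrating by parts and using $u'(0)=u'(L)=0$,
\[
\mu\, u'' = -2u^3 + \lambda u + \tfrac{\nu}{2}\qquad\text{on }(0,L).
\]
Integrating this identity over $[0,L]$ and invoking the Neumann conditions gives $\nu=\tfrac{4}{L}\int_0^L u^3\,ds$, so $\nu$ vanishes precisely when $u$ is odd about the midpoint $s=L/2$. Since the reflection $u(s)\mapsto-u(L-s)$ preserves all three integrals, this motivates seeking the extremal within that symmetry class. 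The ODE has the first integral $\tfrac{\mu}{2}(u')^2=-\tfrac12 u^4+\tfrac{\lambda}{2}u^2+\tfrac{\nu}{2}u+\mathrm{const}$ and is therefore solved explicitly by Jacobi elliptic functions.

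In the symmetric case $\nu=0$ the equation reduces to $\mu u''=-2u^3+\lambda u$, whose bounded solutions with $u'(0)=0$ are $u(s)=A\,\cn(\omega s,k)$ for a modulus $k\in(0,1)$; the second Neumann condition forces the quarter-period relation $\omega L=2K(k)$, recovering the linear profile $A\cos(\pi s/L)$ as $k\to0$. Higher multiples $\omega L=2nK(k)$ merely rescale the quotient by the factor $n^{-2}$ and are suboptimal, so it remains to substitute $u=A\,\cn(2K(k)s/L,\,k)$ and compute
\[
Q(k)=\frac{\int_0^{2K}\cn^4\,dx}{2K(k)\left(\int_0^{2K}\cn^2\,dx\right)\left(\int_0^{2K}(\partial_x\cn)^2\,dx\right)},
\]
with all integrals expressible through the complete elliptic integrals $E(k)$ and $K(k)$. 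One checks $Q(0)=\tfrac{3}{2\pi^2}\approx0.152$ and $Q(k)\to0$ as $k\to1$, so $Q$ attains an interior maximum; solving $Q'(k)=0$ pins the optimal modulus and yields $\max_k Q(k)=0.162278$.

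I expect the principal obstacles to be twofold. First, the rigorous classification of critical points: one must exclude the asymmetric ($\nu\neq0$) branch --- governed by the full cubic and solved by shifted elliptic functions such as $\dn$ --- by showing it cannot exceed the symmetric value, thereby justifying the restriction to the $\cn$ family. Second, the transcendental one-parameter optimisation: establishing that $Q$ has a unique critical point in $(0,1)$ and evaluating the elliptic integrals there to sufficient precision to certify the stated constant $0.162278$. Both steps are delicate but elementary once the elliptic-function representation is in hand.
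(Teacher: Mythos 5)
Your plan follows the paper's route almost exactly: constrained maximisation, an Euler--Lagrange equation of curvature type, explicit Jacobi-elliptic solutions, reduction to a one-parameter quotient $Q(k)$ in the complete elliptic integrals $E(k),K(k)$, and a final numerical optimisation. Even your checkpoints match the paper's ($Q(0)=\tfrac{3}{2\pi^2}$, the $n^{-2}$ loss for multiple half-periods, the optimum $0.1622778\ldots$ at modulus $k\approx 0.8803$, rounded up to the certified bound $0.162278$). However, there is a genuine gap in your classification step, and it is not the one you flag. You assert that once $\nu=0$ the bounded Neumann solutions of $\mu u''=-2u^3+\lambda u$ are exactly the profiles $A\cn(\omega s,k)$. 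That holds only for one sign pattern of the multipliers: testing the Euler--Lagrange equation against $u$ gives $\int_0^L (u')^2\,ds=\tfrac12\lambda\int_0^L u^4\,ds+\mu\int_0^L u^2\,ds>0$, which excludes only the case that \emph{both} multipliers are nonpositive. The surviving sign pattern normalising to $u''-\tfrac12 u^3+u=0$ has bounded Neumann, mean-zero solutions given by odd $\sn$-profiles --- the paper's case (e), with quotient \eqref{eq:2.3.7}. These lie \emph{inside} the symmetric class you restrict to (they are odd about the midpoint, hence have $\int u^3\,ds=0$ and $\nu=0$), so your symmetry reduction does not dispose of them; they must be evaluated and shown suboptimal, which the paper does (see Figure~\ref{figure:1}). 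Note also that ``$\nu$ vanishes precisely when $u$ is odd about the midpoint'' is only an implication, not an equivalence: $\int u^3\,ds=0$ does not force odd symmetry. By contrast, the asymmetric $\nu\neq0$ branch you single out as the principal obstacle is one the paper itself treats lightly --- its Euler--Lagrange equation \eqref{eq:2.2.1} is written without a multiplier for the mean-zero constraint, with the classification delegated to the cited elastica literature --- so your caution there matches rather than falls short of the paper.

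Your second acknowledged obstacle, certifying the one-dimensional optimisation, is resolved in the paper essentially as you anticipate: on the relevant interval $(1/\sqrt2,1)$ one writes $1/Q=F_1\cdot F_2$, proves $F_1'>0$ and $F_2'<0$, reduces $Q'(k)=0$ to the transcendental equation \eqref{eq:2.3.8}, and solves it numerically; no closed form exists, and the safety margin comes from rounding $\mathscr{C}_0=0.162277833\ldots$ up to $0.162278$. So, in summary: your proposal is a faithful skeleton of the paper's argument, but as it stands it is a plan with a concrete classification error (the missing $\sn$ branch) and two self-declared open steps, not a complete proof.
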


\begin{figure}[t]
\centering
\begin{subfigure}[t]{0.32\textwidth}
  \centering
  \includegraphics[width=\linewidth,trim={4cm 10cm 4cm 10cm},clip]{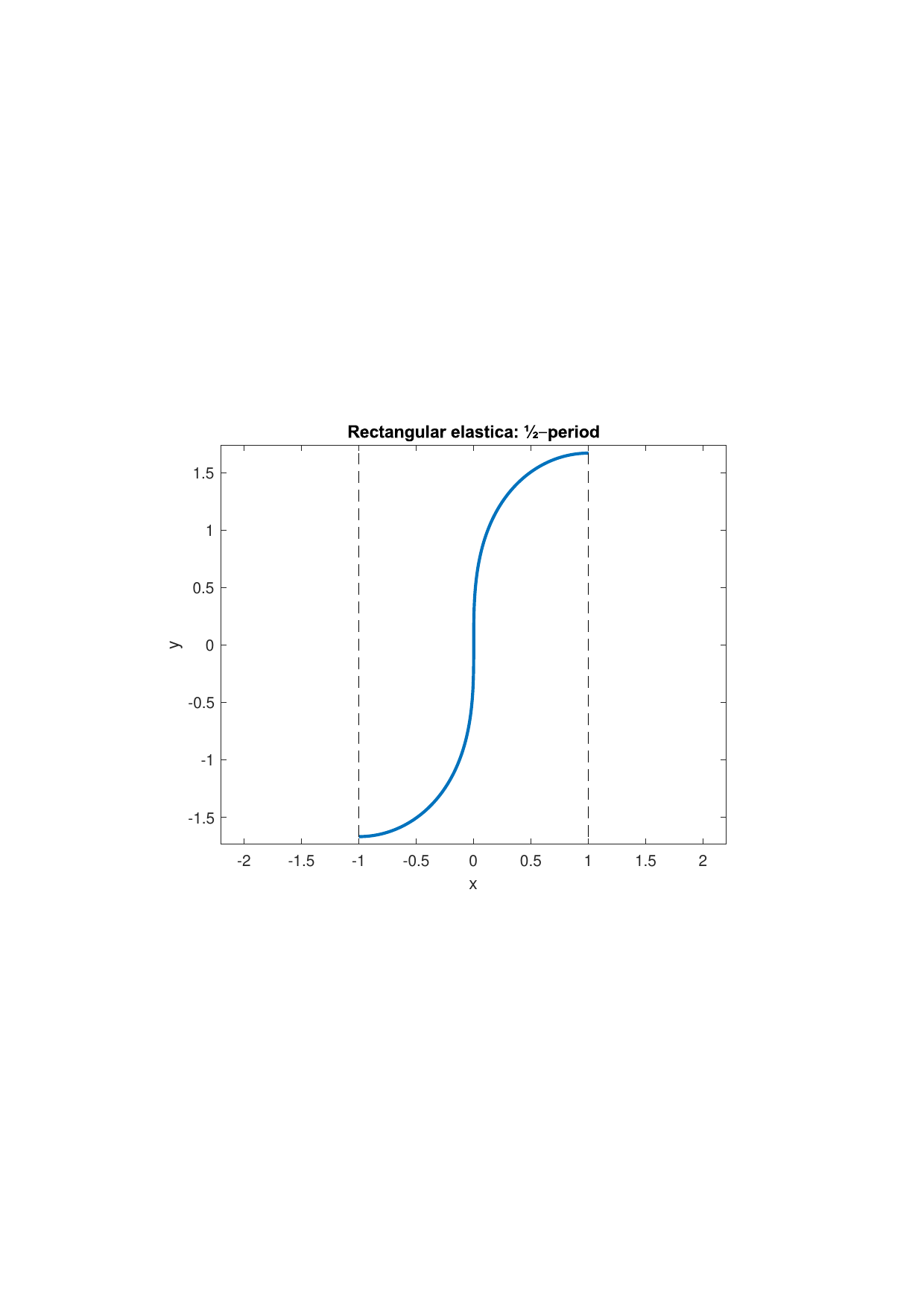}
  \subcaption{One half-period.}
\end{subfigure}\hfill
\begin{subfigure}[t]{0.32\textwidth}
  \centering
  \includegraphics[width=\linewidth,trim={4cm 10cm 4cm 10cm},clip]{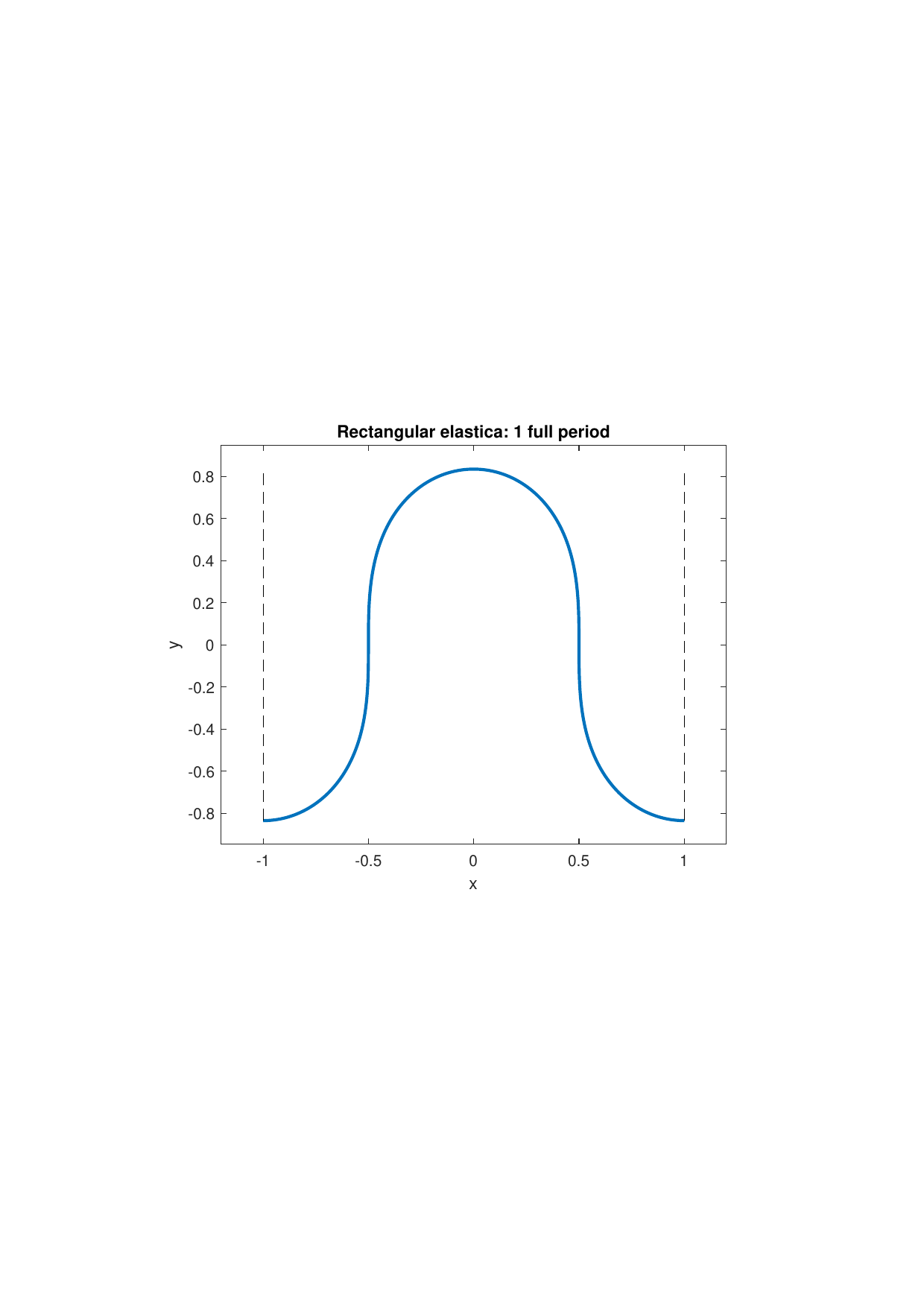}
  \subcaption{A full period.}
\end{subfigure}\hfill
\begin{subfigure}[t]{0.32\textwidth}
  \centering
  \includegraphics[width=\linewidth,trim={4cm 10cm 4cm 10cm},clip]{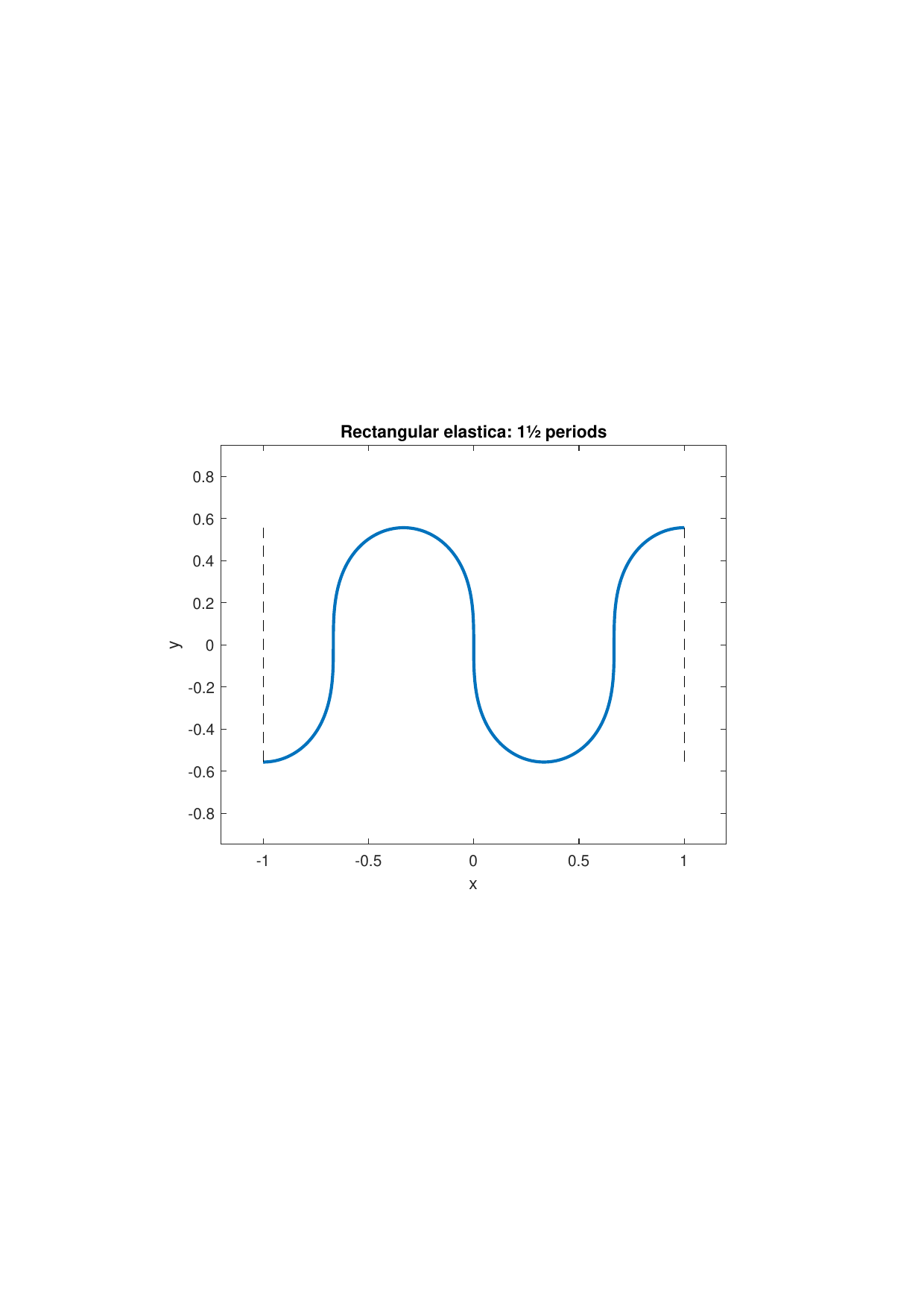}
  \subcaption{Three half-periods.}
\end{subfigure}

\caption{Euler's rectangular elastica in $\SX$ with one, two and three half-periods.
With modulus \(k=1/{\sqrt2}\), an arclength parametrisation $\gamma=(\gamma^1,\gamma^2):[0,L]\to\R^2$ is
\(
\gamma^1(s)=\frac{1}{\mya}\big(2E(\AM(\mya s,k),k)-\mya s\big)\,,
\gamma^2(s)=-\frac{\sqrt2}{\mya}\,\cn(\mya s,k),
\)
{where $\mya=m \Gamma(\tfrac34)^2/\sqrt{\pi}$ depends on the number of half-periods $m$, and where
each} curve has the same length \(L=\tfrac12\big(\Gamma(\tfrac14)/\Gamma(\tfrac34)\big)^2\approx 4.37688.\)
{See Appendix~\ref{AA} for the precise details.}
{For the scale-invariant energy it holds that $\siE[\gamma]=2\pi m^2$. In}
particular, the left image has {$\siE[\gamma]=2\pi$, and so} more energy than is allowed by the hypothesis \eqref{EQmtic} of Theorem \ref{TMmain}.
}
\label{figrect}
\end{figure}

The hypothesis \eqref{EQmtic} involves the mysterious number $1.9615$.
Theorem \ref{TMcritineqintro} reveals the meaning behind this figure.
It is chosen such that its product with $\pi$ is below $\frac{1}{C_0}$ (from \eqref{eq:1.1}).

The optimal value for $C_0$ in \eqref{eq:1.1} is smaller than $0.162278$ and given in \eqref{eq:2.3.9}, below. But as one can see there, this requires one to solve a complicated transcendental equation.
The solution is not available in closed form and a numerical approximation is required.
We take  $C_0 = 0.162278$ as a reliable upper bound for the optimal  constant in \eqref{eq:1.1}.
Supposing \eqref{EQmtic}, Theorem \ref{TMcritineqintro} applied with $u = \k$ yields \eqref{EQsiesuff}, and decay of $t\mapsto\siE[\gamma(\cdot,t)]$ follows.
This is the most important step in the proof of Theorem \ref{TMmain}.

The constant $C_0$ cannot be improved to $1/2\pi\simeq0.159155$ (if it were, this would imply the conjecture from \cite{WW24}).
Through careful analysis of the inequality \eqref{eq:1.1} and its maximising function, we are able to give explicitly an initial curve $\gamma_c$ such that
\begin{itemize}
\item the scale-invariant energy satisfies $\siE[\gamma_c] < 2\pi$, yet 
\item the scale-invariant energy increases at least initially (the right-hand side of \eqref{EQsied} evaluated at $\gamma_c$ is positive).
\end{itemize}
This profile is visually indistinguishable from a half-period of Euler's rectangular elastica (which has scale-invariant energy equal to $2\pi$).
Numerical evidence indicates that the free boundary free elastic flow with initial data equal to $\gamma_c$ diverges as $t\to\infty$.
Since the profile in question is very close to the basin of attraction, {obtaining reliable numerical results is a challenging task}.
We refer the reader to Section \ref{sec:numerics} for further details.

The free boundary free elastic flow does not have any stable family of equilibria apart from the horizontal lines (see Appendix~\ref{AB}).
It typically inflates curves in its desire to minimise the elastic energy $\E$.
{On} rescaling the flow by setting $\hat\gamma(\cdot,t) = \frac{\gamma(\cdot,t)-\overline{\gamma}(t)}{\myL[\gamma(\cdot,t)]}$, where $\overline{\gamma}(t)$ is the barycentre of $\gamma(\cdot,t)$, {it is possible} to study the eventual qualitative shape of the flow.
{Observe that when} $\myL[\gamma(\cdot,t)]\to\infty$, this rescaling brings the {two supporting} lines together.
{This motivates the study of the following limiting case of the free boundary free elastic flow:
\begin{align} \label{eq:limitFEF}
& \text{\eqref{FEF} with $\SX$ replaced by} \nonumber \\ & \quad \{\gamma:[-1,1]\to\R^2\,:\,
    \gamma\text{ smooth, immersed},\ 
    \gamma(\pm1) \cdot e_1 = 0,\ 
    \normal(\pm1)\cdot e_1 = 0,\ 
    \k_s(\pm1) = 0
    \}\,.
\end{align}
}
There are two known families of expanders for the free elastic flow of closed curves, and these give two families of clear candidates for the asymptotic shape of {solutions to the flow \eqref{eq:limitFEF}:}
\begin{itemize}
\item {semicircles, circles and their multiple coverings}; and
\item {half-periods, full periods (and their multiple coverings) of} the Lemniscate of Bernoulli.
\end{itemize}
Note that both are self-similar expanders for the flow \eqref{eq:limitFEF}.
We make the following conjecture on the global behaviour of the free boundary free elastic flow.

\begin{conj}\label{conj:3}
Let $\gamma:[-1,1]\times[0,\infty)\to\R^2$ be a free boundary free elastic flow.
Set $\hat\gamma(\cdot,t) = \frac{\gamma(\cdot,t)-\overline{\gamma}(t)}{\myL[\gamma(\cdot,t)]}$.
Then $\hat\gamma(\cdot,t)$ converges as $t\to\infty$ exponentially fast in the smooth topology to a limit $\gamma_\infty$.
Furthermore, $\gamma_\infty$ is either
{
\begin{enumerate}
\item the horizontal line $[-1,1]\times\{0\}$; or
\item a rescaled Euler's rectangular elastica (with $m$ half-periods); or
\item a self-similar expander for the flow \eqref{eq:limitFEF}.
\end{enumerate}
}
\end{conj}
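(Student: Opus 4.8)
The plan is to argue by a dichotomy on the behaviour of the length $\myL[\gamma(\cdot,t)]$, using the elastic energy $\E$ as a Lyapunov functional. Since \eqref{FEF} is the $L^2(ds)$ gradient flow for $\E$, the map $t\mapsto\E[\gamma(\cdot,t)]$ is non-increasing and converges to some $\E_\infty\ge 0$, with the dissipation integrable in time, $\int_0^\infty\int(\k_{ss}+\tfrac12\k^3)^2\,ds\,dt<\infty$. The endpoints of $\gamma(\cdot,t)$ lie on the two lines $\eta_{\pm1}$, so $\myL[\gamma(\cdot,t)]\ge 2$ for all $t$; the essential alternative is therefore whether $\limsup_{t\to\infty}\myL[\gamma(\cdot,t)]$ is finite or infinite.

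In the bounded-length case I would first combine the bound on $\myL$ with the monotonicity of $\E$ to control $\siE$, and then bootstrap the (fourth-order) parabolic interior and boundary estimates to obtain uniform $C^\infty$ bounds on $\gamma(\cdot,t)$ for $t$ large. Standard compactness then yields subconvergence, along a time sequence, to a smooth stationary solution $\gamma_\ast$, which by the classification of the stationary set $\SS$ recalled in the introduction must be either the horizontal line or $m$ half-periods of Euler's rectangular elastica --- cases (1) and (2). To upgrade subconvergence to genuine exponential convergence in the smooth topology I would invoke a {\L}ojasiewicz--Simon gradient inequality for $\E$ in a neighbourhood of $\gamma_\ast$, using analyticity of the elastic energy in suitable H\"older (or Sobolev) spaces together with the free-boundary conditions; the normalisation $\hat\gamma(\cdot,t)=(\gamma(\cdot,t)-\overline{\gamma}(t))/\myL[\gamma(\cdot,t)]$ then inherits the same convergence. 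By the instability recorded in Appendix~\ref{AB}, case (2) is expected to be non-generic.

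In the unbounded-length case, $\myL[\gamma(\cdot,t)]\to\infty$, I would pass to the rescaled curve $\hat\gamma$ and derive its evolution equation. After rescaling, the two supporting lines sit at $\{x_1=\pm\myL[\gamma(\cdot,t)]^{-1}\}$ and collapse onto the single line $\{x_1=0\}$, so in the limit $\hat\gamma$ satisfies the boundary conditions of the limiting flow \eqref{eq:limitFEF}. The goal is then to show that $\hat\gamma$ converges to a self-similar expander of \eqref{eq:limitFEF}, case (3). The natural route is a blow-down analysis: introduce the expander time variable, establish a Huisken-type monotonicity formula adapted to this fourth-order flow, and extract a self-similarly expanding limit; the admissible expanders are finally classified by solving the self-similar ODE under the collapsed boundary conditions, recovering exactly the semicircles/circles and the Lemniscate-of-Bernoulli families listed before the conjecture.

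The hard part will be twofold. First, because \eqref{FEF} is a fourth-order flow there is no maximum or comparison principle, so every a priori estimate, compactness statement and blow-down argument must be carried by energy (integral) methods alone; this is substantially more delicate than the second-order (curve-shortening) analogue. Second, and most seriously, the scale-invariant energy $\siE$ is \emph{not} monotone along the flow --- this is precisely the obstruction that forces the sharp interpolation inequality of Theorem \ref{TMcritineqintro} and the energy threshold of Theorem \ref{TMmain} --- so in the unbounded-length case one must manufacture a genuinely monotone quantity for the rescaled flow in order to drive convergence to an expander, and must separately rule out drift or rotation of $\hat\gamma$ so as to pin down a unique limit rather than mere subsequential convergence.
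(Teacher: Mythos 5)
You should be aware at the outset that the statement you were asked to prove is Conjecture \ref{conj:3}: the paper itself offers \emph{no proof}, explicitly leaves the analysis ``for future research'', and supports the statement only with the subconvergence result Theorem \ref{TMsubconv} (bounded-length case) and the numerical experiments of Section \ref{sec:numerics} (where a half-lemniscate emerges as the rescaled limit). So your proposal cannot be compared against a proof in the paper; it can only be judged as a program, and as a program it contains several steps that are genuinely open, not merely technical. The most serious is your unbounded-length case: you invoke ``a Huisken-type monotonicity formula adapted to this fourth-order flow'', but no such formula is known for any fourth-order geometric flow --- the Gaussian-weight computation that drives Huisken's monotonicity relies structurally on the second-order heat operator, and its absence is precisely why blow-up/blow-down analysis for elastic-type flows cannot currently be run by localised monotone quantities. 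You concede this yourself in your final paragraph (``one must manufacture a genuinely monotone quantity''), which means the central step of case (3) is a wish rather than an argument. Likewise, your closing classification step assumes that the self-similar expanders of \eqref{eq:limitFEF} are exactly the circle and lemniscate families; the paper only calls these the two \emph{known} families and ``clear candidates'', and no classification is claimed or available.

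Two further gaps deserve naming. First, your dichotomy is not exhaustive as stated: between ``$\myL$ bounded'' and ``$\myL[\gamma(\cdot,t)]\to\infty$'' lies the oscillating regime $\liminf\myL<\limsup\myL=\infty$, and nothing in your argument (nor in the paper, whose Theorem \ref{TMsubconv} assumes a uniform length bound) excludes it; some monotonicity or quantisation of length would be needed, and none is available without the hypothesis \eqref{EQmtic} of Theorem \ref{TMmain}. Second, in the bounded-length case a {\L}ojasiewicz--Simon inequality would indeed upgrade subconvergence to full convergence, but it yields an \emph{exponential} rate only when the {\L}ojasiewicz exponent is $\tfrac12$, i.e.\ when the second variation at the limit is nondegenerate modulo the translation symmetry; near the unstable elastica of Appendix \ref{AB} this must be verified, and in general {\L}ojasiewicz--Simon gives only algebraic rates, so the conjectured exponential convergence in case (2) does not follow from your argument even granting compactness. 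In short: your bounded-length analysis is a plausible and standard route (and consistent with how the paper handles the sub-threshold regime via Theorem \ref{TMcritineqintro} and \cite[Theorem 1.4]{WW24}), but the unbounded case and the exponential rate rest on tools that do not yet exist, which is exactly why the statement is a conjecture.
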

{%
We observe that the first case is guaranteed under the condition \eqref{EQmtic} from our main theorem.
We leave the analysis and the detailed numerical investigation of the above conjecture for future research. But we direct the reader to Section~\ref{sec:numerics}, where a half-period of the Lemniscate of Bernoulli is observed as the limiting shape $\gamma_\infty$ in several numerical experiments.}

The paper is organised as follows.
Section~\ref{Stwo} contains the basic properties of the flow, including local existence and evolution equations.
Section~\ref{Sthree} is concerned with the proof of the critical inequality Theorem \ref{TMcritineqintro}, and related issues such as its sharpness and the construction of $\gamma_c$.
In Section~\ref{Sconv} we conduct global analysis of the flow, including showing that long-time existence is generic and that an a-priori length bound is the main obstacle to convergence.
These are important ingredients for the proof of Theorem \ref{TMmain}, which is also in Section ~\ref{Sconv}.
In Section~\ref{sec:numerics} we state the numerical scheme for approximating the flow, and additionally present several visualisations of simulations of the flow with a variety of initial configurations.
Finally, we include an appendix, detailing Euler's rectangular elastica (Appendix~\ref{AA}) and the linearisation of the flow (Appendix~\ref{AB}).

\section{Analysis of the flow}
\label{Stwo}

\subsection{On the choice of supporting lines.}

It is not essential that the supporting lines are precisely $\eta_{\pm1}$; they may be any pair of parallel lines.
To see this, we present the following lemma, whose proof is a straightforward scaling argument.

\begin{lemma}
\label{lem:scaling}
Let $\ell_{-},\ell_{+}\subset\R^2$ be two parallel lines with
\(
\dist(\ell_{-},\ell_{+}) = {2\lambda}>0\,,
\)
perpendicular to the vector $e\in\R^2$, 
and consider the space
\[
\hat\SX = \{\gamma:[-1,1]\to\R^2\,:\,
    \gamma\text{ smooth, immersed},\ 
    \gamma(\pm1) \in \ell_{\pm},\ 
    \normal(\pm1)\cdot e = 0,\ 
    \k_s(\pm1) = 0
    \}
\,.
\]
Set $\Phi:\R^2\to\R^2$ to be the similarity transformation {$\Phi(x)=\frac1\lambda Qx + x_0$, where $Q\in\mathrm{SO}(2)$} and $x_0\in\R^2$,
determined by $\Phi(\ell_{\pm}) = \eta_{\pm1}(\R)$.

A one-parameter family $\gamma:[-1,1]\times[0,T)\to\R^2$ is a solution to \eqref{FEF}, {with $\SX$ replaced by $\hat\SX$}, if and only if the family {$\tilde\gamma:[-1,1]\times[0, T/\lambda^4)\to\R^2$, defined via $\tilde\gamma(\rho,t):=\Phi(\gamma(\rho,\lambda^4 t))$,} is a solution to \eqref{FEF}.
{Moreover, $\E[\gamma(\cdot,0)] = \frac1\lambda\E[\tilde\gamma(\cdot,0]$ and $\siE[\gamma(\cdot,0)] = \siE[\tilde\gamma(\cdot,0)]$.}
\end{lemma}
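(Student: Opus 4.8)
The plan is to track how each geometric quantity appearing in \eqref{FEF} transforms under the similarity $\Phi$ and the time rescaling $t\mapsto\lambda^4 t$, and then to check that every defining condition of $\hat\SX$ and every term of the evolution equation transforms consistently. Throughout write $\tilde\gamma(\rho,t)=\Phi(\gamma(\rho,\lambda^4 t))$, and recall that the linear part of $\Phi$ is $\frac1\lambda Q$ with $Q\in\mathrm{SO}(2)$.

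First I would record the elementary scalings. Since $\partial_\rho\tilde\gamma=\frac1\lambda Q\partial_\rho\gamma$, the speed satisfies $|\partial_\rho\tilde\gamma|=\frac1\lambda|\partial_\rho\gamma|$, so $d\tilde s=\frac1\lambda\,ds$ and hence $\partial_{\tilde s}=\lambda\partial_s$. The unit tangent and normal merely rotate, $\tilde\tau=Q\tau$ and $\tilde\nu=Q\nu$, and differentiating $\partial_{\tilde s}\tilde\gamma=\tilde\tau$ once more gives $\tilde\k=\lambda\k$ (curvature has the dimension of inverse length). Combining these, $\tilde\k_{\tilde s\tilde s}+\frac12\tilde\k^3=\lambda^3(\k_{ss}+\frac12\k^3)$, so the fourth-order spatial operator scales by $\lambda^3$.

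Next I would verify the equation. By the chain rule $\partial_t\tilde\gamma=\frac1\lambda Q\cdot\lambda^4(\partial_t\gamma)(\cdot,\lambda^4 t)=\lambda^3 Q\,\partial_t\gamma$; the factor $\lambda^4$ in the time variable is exactly what balances the $\lambda^3$ from the operator against the $\frac1\lambda$ from the linear part of $\Phi$. If $\gamma$ solves \eqref{FEF} on $\hat\SX$, then $\partial_t\gamma=-(\k_{ss}+\frac12\k^3)\nu$, whence $\partial_t\tilde\gamma=-\lambda^3(\k_{ss}+\frac12\k^3)Q\nu=-(\tilde\k_{\tilde s\tilde s}+\frac12\tilde\k^3)\tilde\nu$, which is precisely \eqref{FEF} for $\tilde\gamma$. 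For the boundary conditions I would use that $\Phi(\ell_\pm)=\eta_{\pm1}(\R)$ forces $Q$ to map the direction $e$ to $\pm e_1$; then $\tilde\gamma(\pm1)=\Phi(\gamma(\pm1))\in\eta_{\pm1}(\R)$, while $\tilde\nu(\pm1)\cdot e_1=\nu(\pm1)\cdot Q^{-1}e_1=\pm\,\nu(\pm1)\cdot e=0$ and $\tilde\k_{\tilde s}(\pm1)=\lambda^2\k_s(\pm1)=0$, so $\tilde\gamma(\cdot,t)\in\SX$. The reverse implication is the identical computation applied to $\Phi^{-1}$, which is again a similarity (dilation factor $\lambda$), with the time rescaling $t\mapsto t/\lambda^4$; this yields the stated equivalence.

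Finally, the energy identities drop out of the same scalings: $\E[\tilde\gamma]=\frac12\int\tilde\k^2\,d\tilde s=\frac12\int(\lambda\k)^2\frac1\lambda\,ds=\lambda\,\E[\gamma]$, equivalently $\E[\gamma]=\frac1\lambda\E[\tilde\gamma]$, and $\myL[\tilde\gamma]=\int d\tilde s=\frac1\lambda\myL[\gamma]$, so $\siE[\tilde\gamma]=\myL[\tilde\gamma]\,\E[\tilde\gamma]=\myL[\gamma]\,\E[\gamma]=\siE[\gamma]$ is invariant, as asserted. There is no genuine obstacle here: the argument is purely mechanical. The only points that call for a little care are the bookkeeping of the rotation $Q$ in the boundary conditions — in particular verifying that $\Phi(\ell_\pm)=\eta_{\pm1}(\R)$ pins down $Qe=\pm e_1$, so that perpendicularity to $e$ becomes perpendicularity to $e_1$ — and matching the power of $\lambda$ in the time variable to the fourth-order nature of the flow.
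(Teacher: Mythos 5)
Your proposal is correct and is precisely the ``straightforward scaling argument'' the paper invokes (the paper omits the details): you track the scalings $d\tilde s=\tfrac1\lambda ds$, $\tilde\k=\lambda\k$, balance $\partial_t$ against the fourth-order operator via the factor $\lambda^4$, and check the boundary conditions and the energy identities, exactly as intended. All the computations, including the sign bookkeeping $Qe=\pm e_1$ and the exponent matching $\tilde\k_{\tilde s}=\lambda^2\k_s$, are accurate.
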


\subsection{Local well-posedness.}

Local existence for the free boundary free elastic flow can be established via the following general procedure.
Let $\gamma_0\in\SX$ be arbitrary.
Now consider the $\delta$-tubular neighbourhood around $\gamma_0$, parameterised by coordinates $(\myu,\sigma)\mapsto\gamma_0(\myu)+\sigma \normal_0(\myu)$, where $\myu\in[-\myL_0/2,\myL_0/2]$ here is the arclength parameter on $\gamma_0$, $\normal_0$ is the normal to $\gamma_0$ and $\sigma\in(-\delta,\delta)$.
We may convert the free boundary free elastic flow to a scalar parabolic equation for a normal section $v:[-\myL_0/2,\myL_0/2]\times[0,T_\delta)\to\R$ for a short time $T_\delta = T_\delta(\gamma_0,\delta)$, via the requirement that $\gamma(\myu,t) = \gamma_0(\myu) + v(\myu,t)\normal_0(\myu)$.

\begin{lemma}
\label{LMnormalgraph}
Let $\gamma_0\in\SX$, and set ${\normal}_0$, $\k_0$, $\myL_0$ to be its {normal}, scalar curvature and length.
There exists a smooth function $\tilde F:\R^8\to\R$ with the following property.
Suppose there exists a solution $v:[-\myL_0/2,\myL_0/2]\times[0,T)\to\R$ to the PDE
\begin{equation}
\label{EQstepde}
\begin{cases}
    \partial_t v =
    -\frac{1}{((1-v\k_0)^2+v_\myu^2)^{2}}\,v_{\myu\myu\myu\myu}
  - \tilde F(\k_0,(\k_0)_\myu, 
  (\k_0)_{\myu\myu},
  (\k_0)_{\myu\myu\myu},
  v,
  v_\myu,
  v_{\myu\myu},
  v_{\myu\myu\myu})\,,
    \\
    v_{\myu}(\pm\myL_0/2,t) = v_{\myu\myu\myu}(\pm\myL_0/2,t) = 0\,,
    \\
    v(\myu,0) = 0\,,
\end{cases}
\end{equation}
for all $\myu\in(-\myL_0/2,\myL_0/2)$ and $t\in[0,T)$, where $T>0$ is a parameter.
Then $\gamma(\myu,t) := \gamma_0(\myu) + v(\myu,t)\normal_0(\myu)$ solves $\partial_t\gamma\cdot \normal = -(\kappa_{ss} + \frac12\kappa^3)$. After reparametrisation we obtain a free boundary free elastic flow with initial data given by $\gamma_0$.
\end{lemma}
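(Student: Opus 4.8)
The plan is a direct differential-geometric computation: expand every quantity in the flow in terms of $v$ and its $\myu$-derivatives, read off the quasilinear structure (thereby exhibiting $\tilde F$), translate the geometric side conditions defining $\SX$ into conditions on $v$, and finally remove the spurious tangential motion by reparametrisation. Since $\myu$ is arclength on $\gamma_0$, the Frenet relations $\partial_\myu\tangent_0=\k_0\normal_0$ and $\partial_\myu\normal_0=-\k_0\tangent_0$ give $\gamma_\myu=(1-v\k_0)\tangent_0+v_\myu\normal_0$, whence the metric factor is $g:=|\gamma_\myu|^2=(1-v\k_0)^2+v_\myu^2$ and
\[
\tangent=g^{-1/2}\big[(1-v\k_0)\tangent_0+v_\myu\normal_0\big],\qquad \normal=g^{-1/2}\big[-v_\myu\tangent_0+(1-v\k_0)\normal_0\big].
\]
Inside the $\delta$-tube one has $1-v\k_0>0$, so $g>0$ and $\gamma(\cdot,t)$ stays immersed.

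Next I would compute the curvature using $\partial_s=g^{-1/2}\partial_\myu$ and $\gamma_\myu\cdot\normal=0$, which give $\k=\gamma_{ss}\cdot\normal=g^{-1}\gamma_{\myu\myu}\cdot\normal$. Expanding $\gamma_{\myu\myu}$ with the Frenet relations yields
\[
\k=g^{-3/2}\big[(1-v\k_0)v_{\myu\myu}+(1-v\k_0)^2\k_0+2v_\myu^2\k_0+v v_\myu(\k_0)_\myu\big],
\]
so $\k$ is smooth in $(v,v_\myu,v_{\myu\myu})$ and in $(\k_0,(\k_0)_\myu)$, and linear in $v_{\myu\myu}$ with coefficient $g^{-3/2}(1-v\k_0)$. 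Differentiating twice more in $\myu$ and accounting for the factors $g^{-1/2}$ produces $\k_{ss}=g^{-5/2}(1-v\k_0)\,v_{\myu\myu\myu\myu}+R$, where $R$ collects only terms involving $v$ through at most $v_{\myu\myu\myu}$ and $\k_0$ through at most $(\k_0)_{\myu\myu\myu}$. Projecting the flow onto the evolving normal via $\partial_t\gamma=v_t\normal_0$ and $\normal_0\cdot\normal=g^{-1/2}(1-v\k_0)$, the equation $\partial_t\gamma\cdot\normal=-(\k_{ss}+\tfrac12\k^3)$ becomes $v_t=-\frac{g^{1/2}}{1-v\k_0}\big(\k_{ss}+\tfrac12\k^3\big)$. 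Substituting the expansion of $\k_{ss}$, the top-order contribution is exactly $-g^{-2}v_{\myu\myu\myu\myu}=-((1-v\k_0)^2+v_\myu^2)^{-2}v_{\myu\myu\myu\myu}$, matching the leading term of \eqref{EQstepde}; everything else is a function of $(\k_0,(\k_0)_\myu,(\k_0)_{\myu\myu},(\k_0)_{\myu\myu\myu},v,v_\myu,v_{\myu\myu},v_{\myu\myu\myu})$ whose only denominators are positive powers of $g>0$. This quantity \emph{is} the sought $\tilde F$, smooth on the relevant region of $\R^8$.

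For the boundary conditions I would argue as follows. Because $\gamma_0\in\SX$, at the endpoints $\normal_0$ is vertical and $\gamma_0(\pm1)\in\eta_{\pm1}(\R)$, so $\gamma(\pm\myL_0/2,t)=\gamma_0(\pm\myL_0/2)+v(\pm\myL_0/2,t)\normal_0(\pm\myL_0/2)$ remains on $\eta_{\pm1}(\R)$ for \emph{every} value of $v(\pm\myL_0/2,t)$; the spatial constraint is automatic and imposes no Dirichlet condition (the genuinely free boundary). From the formula for $\tangent$, the perpendicularity condition $\normal\cdot e_1=0$ is equivalent to the vanishing of the $\normal_0$-component of $\tangent$, i.e.\ to $v_\myu(\pm\myL_0/2,t)=0$. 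Using $\k_s=g^{-1/2}\k_\myu$ together with $v_\myu=0$ and the fact that $\gamma_0$ satisfies $(\k_0)_\myu=0$ at its endpoints (as $\gamma_0\in\SX$), the remaining condition $\k_s=0$ reduces to $v_{\myu\myu\myu}(\pm\myL_0/2,t)=0$. These are precisely the Neumann-type conditions in \eqref{EQstepde}, and $v(\cdot,0)=0$ gives $\gamma(\cdot,0)=\gamma_0$. Finally, the family $\gamma_0+v\normal_0$ has the correct normal velocity but a tangential velocity $\partial_t\gamma\cdot\tangent=g^{-1/2}v_\myu\,v_t$ that vanishes at the endpoints (again because $v_\myu=0$ there); invoking the invariance of the geometric flow under tangential reparametrisation, I would solve the associated endpoint-fixing ODE for a time-dependent diffeomorphism of $[-1,1]$ to remove it, yielding an honest solution of \eqref{FEF} in $\SX$ with initial datum $\gamma_0$.

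The main obstacle is the bookkeeping in the curvature expansion: one must verify that $v_{\myu\myu\myu\myu}$ enters $\k_{ss}$ only linearly and with exactly the coefficient producing $g^{-2}$ after the projection, and that no fourth-order term survives in the remainder, so that $\tilde F$ genuinely depends on $v$ only through derivatives up to third order (and on $\k_0$ up to third order). This is not conceptually deep, but it is the step where a stray top-order term or an arithmetic slip would destroy the claimed quasilinear structure. The boundary-condition translation and the reparametrisation are then routine, provided the immersion condition $g>0$ is maintained throughout the $\delta$-tube.
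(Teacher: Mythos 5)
Your proposal is correct and follows essentially the same route as the paper: the same expansions $\tangent, \normal$, the identical curvature formula $\k=\gamgrad^{-3}\bigl[(1-v\k_0)v_{\myu\myu}+\k_0(1-v\k_0)^2+2\k_0 v_\myu^2+v\,v_\myu(\k_0)_\myu\bigr]$, the same identification of the leading coefficient $\gamgrad^{-4}=((1-v\k_0)^2+v_\myu^2)^{-2}$ after projecting $v_t\normal_0$ onto $\normal$ (which is exactly how the paper defines $\tilde F=\frac{\gamgrad}{1-v\k_0}F_2$), the same translation of the free boundary conditions into $v_\myu=v_{\myu\myu\myu}=0$ using $(\k_0)_\myu(\pm\myL_0/2)=0$, and the same removal of the tangential velocity (vanishing at the endpoints since $v_\myu=0$ there) by solving a transport ODE for a reparametrising diffeomorphism. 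The only difference is one of completeness rather than substance: the paper explicitly carries out the boundary bookkeeping you flag as the main obstacle, verifying $\partial_\myu\gamgrad=0$ and the vanishing of each remainder term in $\k_\myu$ at the endpoints, whereas you state these steps in outline.
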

\begin{proof}
Take $\myu$ to be the arclength parameter on $\gamma_0$ as above so that it is parameterised from $[-\myL_0/2,\myL_0/2]$.
The tangent and normal vectors along the family $\gamma(\cdot,t)$ are given by
\[
\tangent = \frac{(1-v\k_0)\tangent_0 + v_\myu \normal_0}{|(1-v\k_0)\tangent_0 + v_\myu \normal_0|}\,,\quad
\normal = \gamgrad^{-1}\rot ((1-v\k_0)\tangent_0 + v_\myu \normal_0)
= \gamgrad^{-1}((1-v\k_0)\normal_0 - v_\myu \tangent_0)
\,,
\]
where $\gamgrad := |(1-v\k_0)\tangent_0 + v_\myu \normal_0| = \sqrt{(1-v\k_0)^2+v_\myu^2}$, and $\rot (X,Y) = (-Y,X)$.
The curvature vector is
\begin{align*}
K &= \gamgrad^{-1}
\left(
\gamgrad^{-1}((1-v\k_0)\tangent_0 + v_\myu \normal_0)
\right)_\myu
\\&= \gamgrad^{-2}
\left(
(\k_0 - v\k_0^2 + v_{\myu\myu})\normal_0 - (2v_\myu\k_0+v(\k_0)_\myu) \tangent_0
\right)
+ (\gamgrad^{-1})_\myu \tangent
\,.
\end{align*}
So for the {scalar curvature} we have
\begin{align*}
\k &= K\cdot \normal
\\ &= \gamgrad^{-3}
\left(
(\k_0 - v\k_0^2 + v_{\myu\myu})\normal_0 - (2v_\myu\k_0+v(\k_0)_\myu) \tangent_0
\right)
\cdot((1-v\k_0)\normal_0 - v_\myu \tangent_0)
\\ &= \gamgrad^{-3}
\left(
(\k_0 - v\k_0^2 + v_{\myu\myu})(1-v\k_0)
+ (2v_\myu\k_0+v(\k_0)_\myu)v_\myu
\right)
\\ &= 
 \gamgrad^{-3}(1 - v\k_0)v_{\myu\myu}
+ \gamgrad^{-3}\left(
    2\k_0 v_\myu
    + v(\k_0)_\myu
\right)v_{\myu}
+ \gamgrad^{-3}\left(
     - 2\k_0^2 + v\k_0^3
\right) v
 + \gamgrad^{-3}\k_0\,,
\end{align*}
which we write as
\[
\k =  \gamgrad^{-3}(1 - v\k_0)v_{\myu\myu}
    + F_0(\k_0,(\k_0)_\myu,v,v_\myu)
\,,
\]
where $F_0$ is smooth (recall that we are working in a small tubular neighbourhood of $\gamma_0$ where the coordinate map is a diffeomorphism).
We further restrict the diameter of the tubular neighbourhood now as follows.
As $\gamma_0$ is smooth there exist $\delta, \varepsilon >0$ such that $1 - \delta\k_0 > \varepsilon > 0$ on $[-\myL_0/2,\myL_0/2]$.
Restrict the diameter of the domain (and thus the image $v([-\myL_0/2,\myL_0/2],t)$) to this $\delta$.
Note that this implies $\gamgrad = \sqrt{(1-v\k_0)^2 + v_\myu^2} < C(\k_0,v,v_\myu)$, so we obtain uniform strict positivity of the coefficient of the highest-order term on bounded subsets.

Returning to our calculation, we find
\[
\k_s = \gamgrad^{-1}\k_\myu
 = \gamgrad^{-4}(1 - v\k_0)v_{\myu\myu\myu}
    + F_1(\k_0,(\k_0)_\myu,(\k_0)_{\myu\myu},v,v_\myu,v_{\myu\myu})\,,
\]
where $F_1 =  v_{\myu\myu}\partial_\myu(\gamgrad^{-3}(1 - v\k_0))
+ \gamgrad^{-1}\partial_\myu(F_0(\k_0,(\k_0)_\myu,v,v_\myu))$ is smooth.

Now let us check that the boundary conditions given to $v$ imply that $\gamma$ satisfies the free boundary conditions.
First, as $\gamma_0\in\SX$, and $v$ is a normal displacement, the contact condition $\gamma(\pm\myL_0/2,t)\in\eta_{\pm1}(\R)$ is satisfied.
Second, since $v_\myu(\pm \myL_0/2,t) = 0$ (and using again $\gamma_0\in\SX$), we have
\begin{align*}
\normal(\pm\myL_0/2,t)\cdot e_1
&= \gamgrad^{-1}(\pm\myL_0/2,t)(\gamgrad^{-1}((1-v\k_0)\normal_0 - v_\myu \tangent_0))(\pm\myL_0/2,t)\cdot e_1
\\
&= \gamgrad^{-1}(\pm\myL_0/2,t)( - v_\myu(\pm\myL_0/2,t)) = 0
\end{align*}
as required.

The remaining boundary condition is $\k_s(\pm\myL_0/2,t)=0$.
To prepare, let us calculate
\begin{align*}
\partial_\myu\gamgrad(\pm\myL_0/2,t)
&= \tangent(\pm\myL_0/2,t)\cdot((-2v_\myu\k_0 - v(\k_0)_\myu)\tangent_0 + (\k_0-v\k_0^2+v_{\myu\myu})\normal_0)(\pm\myL_0/2,t)
= 0
\,.
\end{align*}
Above we used the fact that $v_\myu(\pm\myL_0/2,t) = 0$ and $\gamma_0\in\SX$.
Note also that $\partial_\myu((1 - v\k_0)v_{\myu\myu})(\pm\myL_0/2,t) = 0$, which uses additionally $v_{\myu\myu\myu}(\pm\myL_0/2,t) = 0$. 
We shall also need that
\[
 \partial_\myu\left(
    (2\k_0 v_\myu
    + v(\k_0)_\myu)v_{\myu}
\right)(\pm\myL_0/2,t) = 0\quad\text{ and }\quad
 \partial_\myu\left(
     (- 2\k_0^2 + v\k_0^3)v
\right)(\pm\myL_0/2,t) = 0\,.
\]
Using all of these, we calculate:
\begin{align*}
\k_s(\pm\myL_0/2,t)
 &= \gamgrad^{-1}\partial_\myu\Big(
 \gamgrad^{-3}(1 - v\k_0)v_{\myu\myu}
+ \gamgrad^{-3}\left(
    2\k_0 v_\myu
    + v(\k_0)_\myu
\right)v_{\myu}
\\ &\quad
+ \gamgrad^{-3}\left(
     - 2\k_0^2 + v\k_0^3
\right) v
 + \gamgrad^{-3}\k_0\Big)(\pm\myL_0/2,t)
\\
 &= \gamgrad^{-4}\partial_\myu\big(
  \k_0\big)(\pm\myL_0/2,t)
  = 0\,.
\end{align*}
Thus the family $\gamma(\cdot,t)$ is contained in $\SX$.

Computing further, we find
\[
\k_{ss} + \frac12\k^3
 = \gamgrad^{-5}(1 - v\k_0)v_{\myu\myu\myu\myu}
    + F_2(\k_0,(\k_0)_\myu, 
  (\k_0)_{\myu\myu},
  (\k_0)_{\myu\myu\myu},
  v,
  v_\myu,
  v_{\myu\myu},
  v_{\myu\myu\myu})\,,
\]
where again $F_2$ is smooth.
Finally, set
\begin{align*}
\tilde F(\k_0,(\k_0)_\myu, 
  (\k_0)_{\myu\myu},&
  (\k_0)_{\myu\myu\myu},
  v,
  v_\myu,
  v_{\myu\myu},
  v_{\myu\myu\myu}) \\&= \frac{\sqrt{(1-v\k_0)^2+v_\myu^2}}{1-v\k_0}
F_2(\k_0,(\k_0)_\myu, 
  (\k_0)_{\myu\myu},
  (\k_0)_{\myu\myu\myu},
  v,
  v_\myu,
  v_{\myu\myu},
  v_{\myu\myu\myu})
\,.
\end{align*}
Then, if $v$ satisfies the PDE \eqref{EQstepde}, we have
\[
\partial_t\gamma\cdot \normal
    = -\k_{ss} - \tfrac12\k^3.
\]
At this stage the velocity of $\gamma$ may still contain a tangential component.
Indeed, decomposing
\[
\partial_t\gamma = (\partial_t\gamma\cdot \normal)\normal + (\partial_t\gamma\cdot \tangent)\tangent
                 =: V_n\,\normal + {V_t}\,\tangent,
\]
the  free boundary free elastic flow prescribes only the normal speed 
\(V_n=-\k_{ss}-\tfrac12\k^3\),
while the tangential term~${V_t}$ corresponds to a reparametrisation of the curve.
Such tangential motions are admissible only if they vanish at the boundary (since otherwise the boundary points would move orthogonal to the supports).

To obtain a purely normal evolution while preserving the free boundary conditions,
we reparameterise the curve by solving for a family of diffeomorphisms
\[
\phi(\cdot,t):[-\myL_0/2,\myL_0/2]\to[-1,1],\qquad
\phi(\myu,0)=\frac{2}{\myL_0}\myu,\qquad
\phi(\pm\myL_0/2,t)=\pm 1,
\]
satisfying the transport equation
\begin{equation}\label{EQ:phi-transport}
\partial_t\phi(\myu,t)
   = -\,\frac{{V_t}(\phi(\myu,t),t)}{|\gamma_\myu(\phi(\myu,t),t)|}.
\end{equation}
Defining the reparameterised curve
\(\widetilde\gamma(\myu,t):=\gamma(\phi(\myu,t),t)\),
we compute
\[
{\partial_t\widetilde\gamma(\myu,t)
   = \partial_t\gamma(\phi(\myu,t),t)
     +\gamma_\myu(\phi(\myu,t),t)\,\partial_t\phi(\myu,t)
   = V_n(\phi(\myu,t),t)\,\normal(\phi(\myu,t),t),}
\]
so that $\partial_t\widetilde\gamma$ is purely normal.
Because $V_t/|\gamma_\myu|$ is continuous and bounded
and the initial and boundary data for~\eqref{EQ:phi-transport} fix the endpoints,
standard ODE theory gives a unique $\phi\in C^{1}$ in~$t$
with $\phi(\cdot,t)$ a diffeomorphism for each~$t$ small enough, because $\gamma_\myu(\myu,0)=1$.
Thus the reparameterised family $\widetilde\gamma$ satisfies
\[
\partial_t\widetilde\gamma\cdot \normal
   = -\k_{ss}-\tfrac12\k^3,
\]
with zero tangential velocity and fixed boundary.
Consequently $\widetilde\gamma$ (and hence $\gamma$ up to reparametrisation)
is a free boundary free elastic flow.
\end{proof}

From Lemma \ref{LMnormalgraph} and standard theory, local well-posedness follows.
We use the setting of maximal $L^p$-regularity, following \cite{PS16}.

\begin{remark}[Trace space for maximal $L^p$-regularity]
\label{rmk:trace-space}
Let $1<p<\infty$, $T>0$, $I=[-\myL_0/2,\myL_0/2]$, and set
\begin{align*}
E_p^1(0,T)&:=W^{1,p}(0,T;X_0)\cap L^p(0,T;X_1),\qquad
X_0:=L^p(I),\\
X_1&:=\{v\in W^{4,p}(I)\,:\,v'(\pm\myL_0/2)=v'''(\pm\myL_0/2)=0\}.
\end{align*}
For abstract maximal $L^p$-regularity problems, the initial trace space is
\[
\mathrm{tr}_{t=0}\,E_p^1(0,T)\;=\;(X_0,X_1)_{1-1/p,p},
\]
with continuous, surjective trace map and a bounded right inverse independent of $T$.
In our setting,
\[
(X_0,X_1)_{1-1/p,p}
\;=\;B^{\,4-4/p}_{p,p}(I)\cap\{v\,:\,v'(\pm\myL_0/2)=v'''(\pm\myL_0/2)=0\}.
\]
Since $4-4/p\notin\mathbb N$ for $p>5$, we have the identification
$B^{\,4-4/p}_{p,p}(I)=W^{4-4/p,p}(I)$ on the bounded interval~$I$,
and the boundary conditions are meaningful in the trace sense because
\[
4-\tfrac{4}{p} \;>\; 1+\tfrac{1}{p}\quad\text{and}\quad
4-\tfrac{4}{p} \;>\; 3+\tfrac{1}{p}
\qquad(\text{equivalently }p>5),
\]
which is exactly our threshold on~$p$.
Consequently,
\[
\mathrm{tr}_{t=0}\,E_p^1(0,T)
\;=\;W^{4-4/p,p}(I)\cap\{v\,:\,v'(\pm\myL_0/2)=v'''(\pm\myL_0/2)=0\},
\]
and our choice \(v(0)=0\in(X_0,X_1)_{1-1/p,p}\) is the natural (optimal) initial space for
Theorem~\ref{TMste}.  The same identification holds uniformly for frozen operators
$A(w)$ in a small neighbourhood, as used in the quasilinear fixed-point step (Step 4) of the proof of Theorem \ref{TMste}.
\end{remark}

\begin{theorem}
\label{TMste}
Let $\gamma_0\in\SX$ with curvature $\k_0$ and length $\myL_0$.
Fix $p>5$, set $I=[-\myL_0/2,\myL_0/2]$, and define
\[
X_0:=L^p(I),\qquad 
X_1:=\{v\in W^{4,p}(I)\,:\,v'(\pm\myL_0/2)=v'''(\pm\myL_0/2)=0\}.
\]
Write
\[
\gamgrad = \gamgrad(w) = \sqrt{(1-w\k_0)^2+w_\myu^2},\qquad
a=a(w)=\frac{1}{\gamgrad^4}=\frac{1}{\big((1-w\k_0)^2+w_\myu^2\big)^2}.
\]
Consider
\begin{equation}\label{E:PDE}
\begin{cases}
\partial_t v + A(v)v = G(v) & \text{in } I\times(0,T),\\[1mm]
v'(\pm\myL_0/2,t)=v'''(\pm\myL_0/2,t)=0&\text{on } (0,T),\\[1mm]
v(\myu,0)=0,
\end{cases}
\end{equation}
with
\[
A(w):=a(w)\,\partial_\myu^4,\qquad
G(w):=-\tilde F\big(\k_0,
(\k_0)_\myu,
(\k_0)_{\myu\myu},
(\k_0)_{\myu\myu\myu},
w,w_\myu,w_{\myu\myu},w_{\myu\myu\myu}\big),
\]
where $\tilde F$ is the smooth function from Lemma~\ref{LMnormalgraph}.
Then:

\begin{enumerate}[label=(\roman*)]
\item \textup{(Existence and uniqueness)}  There exists $T>0$ such that \eqref{E:PDE}
admits a unique solution
\[
v\in W^{1,p}(0,T;X_0)\cap L^p(0,T;X_1)\,.
\]

\item \textup{(Parabolic smoothing)} We have $v\in C^\infty(I\times(0,T])$.
Smoothness up to $t=0$ holds provided the full hierarchy of boundary
compatibility conditions generated from \eqref{E:PDE} 
and $v'(\pm\myL_0/2,t)=v'''(\pm\myL_0/2,t)=0$ 
is satisfied at $t=0$.

\item \textup{(Free elastic flow)} The curve
$\gamma(\myu,t):=\gamma_0(\myu)+v(\myu,t)\normal_0(\myu)$ defines, after reparametrisation, 
a free boundary free elastic flow on $[0,T)$.
\end{enumerate}
\end{theorem}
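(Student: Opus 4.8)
The plan is to cast \eqref{E:PDE} as an abstract quasilinear parabolic problem and apply the maximal $L^p$-regularity theory of \cite{PS16}. The backbone is the observation that, because the prescribed initial datum is $v_0=0$, the operator frozen at $t=0$ is the \emph{constant-coefficient} operator $A_0:=a(0)\,\partial_\myu^4=\partial_\myu^4$ (indeed $\gamgrad(0)=\sqrt{(1-0)^2+0}=1$). It therefore suffices to establish maximal $L^p$-regularity for the single linear operator $A_0$ on $X_0=L^p(I)$ with domain $X_1$, and then to treat both the quasilinear defect $(A_0-A(v))v=(1-a(v))\,\partial_\myu^4 v$ and the lower-order term $G(v)$ as perturbations handled by a contraction in the maximal-regularity space $E_p^1(0,T)$ of Remark~\ref{rmk:trace-space}.

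For the linear step I would exploit the structure of the boundary conditions. The operator $\partial_\myu^4$ with $v'(\pm\myL_0/2)=v'''(\pm\myL_0/2)=0$ is exactly the square $(-\Delta_N)^2$ of the Neumann Laplacian on $I$: its natural domain forces $v'=0$ and $(-v'')'=-v'''=0$ at the endpoints. Since the Neumann Laplacian on a bounded interval is a nonnegative self-adjoint operator admitting a bounded $H^\infty$-calculus on $L^p(I)$ for every $1<p<\infty$, so does its square, and bounded $H^\infty$-calculus yields $R$-sectoriality and hence maximal $L^p$-regularity. Equivalently, one verifies the Lopatinskii--Shapiro condition directly: the resolvent model problem $u^{(4)}=\lambda u$ on the half-line $x>0$ with $u\to0$ at $+\infty$ has a two-dimensional space of decaying solutions spanned by $e^{\mu_1 x},e^{\mu_2 x}$ with $\operatorname{Re}\mu_j<0$, and the boundary functionals $u'(0),u'''(0)$ form a $2\times2$ system whose determinant $\mu_1\mu_2(\mu_1^2-\mu_2^2)$ is nonzero for $\lambda$ in a sector avoiding $\R_{\ge0}$. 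Parameter-ellipticity of the principal symbol $a(0)|\xi|^4$ is immediate from $a(0)=1>0$. This places us within the hypotheses of the linear maximal-regularity theorem of \cite{PS16}, furnishing a bounded solution operator $S$ for $\partial_t v+A_0 v=f$, $v(0)=0$, with the stated boundary conditions, uniform on $(0,T)$.

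With the linear theory in hand I would close part~(i) by a fixed point. Writing $N(v):=(1-a(v))\,\partial_\myu^4 v+G(v)$, a solution of \eqref{E:PDE} is a fixed point of $v\mapsto SN(v)$. The decisive estimate is that $N$ is locally Lipschitz from $E_p^1(0,T)$ into $L^p(0,T;X_0)$ with Lipschitz constant that tends to $0$ as $T\downarrow0$ on a small ball around the null state. For the lower-order term $G(v)$ this follows from the smoothness of $\tilde F$ together with the mixed-derivative embeddings of the maximal-regularity class: for $p>5$ one has $E_p^1(0,T)\hookrightarrow C([0,T];(X_0,X_1)_{1-1/p,p})$ and, by Remark~\ref{rmk:trace-space}, $(X_0,X_1)_{1-1/p,p}\hookrightarrow C^3(\bar I)$, so that $v,v_\myu,v_{\myu\myu},v_{\myu\myu\myu}$ are controlled in $L^\infty$, while the $L^p$-in-time integration contributes a positive power of $T$. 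For the top-order defect $(1-a(v))\partial_\myu^4 v$ one uses that $a$ is smooth with $a(0)=1$, so $\|1-a(v)\|_{L^\infty(I\times(0,T))}$ is small on a small ball about the null state, and this smallness beats the unbounded factor $\partial_\myu^4 v$, which is itself controlled in $L^p(0,T;L^p(I))$ by the $E_p^1$-norm. The datum $v_0=0$ lies in the trace space and trivially satisfies the boundary compatibility $v'=v'''=0$, so the contraction runs on a full ball, and uniqueness follows from the same Lipschitz bounds.

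Parts~(ii) and~(iii) are then standard. For the parabolic smoothing I would bootstrap by the parameter/translation trick: time-difference quotients together with repeated application of maximal regularity to the differentiated equations and interior Sobolev embeddings upgrade $v$ to $C^\infty(I\times(0,T])$; smoothness up to $t=0$ requires the full hierarchy of compatibility conditions obtained by differentiating \eqref{E:PDE} and the boundary relations at $t=0$, as stated. For part~(iii) there is nothing new to prove: Lemma~\ref{LMnormalgraph} already shows that the normal graph $\gamma(\myu,t)=\gamma_0(\myu)+v(\myu,t)\normal_0(\myu)$ satisfies $\partial_t\gamma\cdot\normal=-(\k_{ss}+\tfrac12\k^3)$ together with the free boundary conditions, and that a tangential reparametrisation produces a genuine free boundary free elastic flow. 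I expect the principal obstacle to be the linear step, namely securing maximal $L^p$-regularity (equivalently, the Lopatinskii--Shapiro condition) for the fourth-order operator under the mixed Neumann-type conditions $v'=v'''=0$; this is precisely where the threshold $p>5$ and the admissibility of the trace space enter, and once it is established the quasilinear fixed point and the bootstrapping are routine.
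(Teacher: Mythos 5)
Your proposal is correct, and it reaches the same destination as the paper's proof through the same general framework (maximal $L^p$-regularity in the sense of Pr\"uss--Simonett, Lopatinskii--Shapiro for the boundary pair $v'=v'''=0$, a fixed point in $E_p^1(0,T)$, bootstrapping for smoothing, and Lemma~\ref{LMnormalgraph} for reconstruction), but the decomposition at the quasilinear step is genuinely different. The paper verifies maximal regularity for the whole family of \emph{frozen variable-coefficient} operators $A(w)=a(w)\partial_\myu^4$, $w$ ranging over a small ball $\mathcal U\subset W^{4-4/p,p}(I)$ --- this needs $a(w,\cdot)\in C^{0,\alpha}$, which is where $p>5$ enters for the paper --- and then invokes the abstract quasilinear theorem \cite[Theorem 5.1.1]{PS16} as a black box. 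You instead exploit the special feature $v_0=0$, so the operator frozen at the initial state is the \emph{constant-coefficient} operator $\partial_\myu^4$; your identification of it as $(-\Delta_N)^2$, the square of the Neumann Laplacian, gives maximal regularity via self-adjointness and bounded $H^\infty$-calculus without any variable-coefficient elliptic theory, and you then absorb the full quasilinear defect $(1-a(v))\partial_\myu^4 v$ by a hand-rolled contraction using smallness of the ball radius. Your route is more elementary and self-contained (it essentially re-derives the relevant case of \cite[Theorem 5.1.1]{PS16} rather than citing it); the paper's route is more robust, since freezing at a general $w\in\mathcal U$ is exactly what is needed to relax the regularity of $\gamma_0$ (cf.\ Remark~\ref{RMreg}), where the initial datum is small but nonzero and the frozen operator no longer has constant coefficients.

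Two points in your write-up should be tightened, though neither breaks the argument. First, the $H^\infty$-calculus step: the $H^\infty$-angle of $(-\Delta_N)^2$ is \emph{twice} that of $-\Delta_N$, so "so does its square" needs the angle of the Neumann Laplacian on $L^p(I)$ to be strictly less than $\pi/4$; this is true (by even reflection one diagonalises $-\Delta_N$ by cosine series and the Mikhlin theorem gives angle $0$), but it must be said --- alternatively your parallel Lopatinskii--Shapiro verification, which coincides with the paper's Step 2, bypasses the issue entirely (note the paper also treats the degenerate case $\lambda=0$ separately, via the fundamental system $\{1,t,t^2,t^3\}$, which your sketch omits). Second, your smallness mechanism for the top-order defect requires the radius $R$ of the ball in $E_p^1(0,T)$ to be taken small (not merely $T$ small): since the trace-space embedding constant is $T$-independent only gives $\sup_t\|v(t)\|_{C^3}\le CR$, you need small $R$ to make $\|1-a(v)\|_{L^\infty}\le CR$ small, and then small $T$ so that $\|S\,G(0)\|_{E_p^1}\lesssim T^{1/p}$ lands the iteration map inside that ball ($G(0)$ is in general nonzero, as $\gamma_0$ need not be stationary). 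With these caveats made explicit, your smallness also automatically yields $|v\k_0|<1$, so the tubular-neighbourhood condition needed for part (iii) (the paper's Step 5) is satisfied without further shrinking of $T$.
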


\begin{proof}
\emph{Step 1: Abstract setup and regularity thresholds.}
For $w\in W^{4-4/p,p}(I)$ we have
$w_{\myu\myu\myu}\in C^{0,\alpha}$ and $w_\myu\in L^\infty$ provided $p>5$, with
$\alpha=1-\tfrac{5}{p}\in(0,1)$.
This is the chief reason for the restriction $p>5$.

Hence $a(w,\cdot)\in C^{0,\alpha}\cap L^\infty$, and on a sufficiently small ball
$\mathcal U\subset W^{4-4/p,p}$ about $0$ we have uniform bounds
\[
0<a_* \le a(w,\myu)\le a^*<\infty,\qquad
\inf_{(\myu,w)\in I\times\mathcal U}\big(1-w\k_0(\myu)\big)\;>\;0,
\]
so $G:\mathcal U\to X_0$ is $C^\infty$ (the factor $(1-w\k_0)^{-1}$ is harmless on $\mathcal U$)
and $(i\xi)^4 a(w,\myu)$ is parameter-elliptic with any angle $\theta\in(0,\pi/2)$ (see \cite[Definition~6.1.1]{PS16}).

\emph{Step 2: Lopatinskii-Shapiro (LS) condition.}
The boundary operators are
$B_1v=v'$ and $B_2v=v'''$.
In one dimension the LS condition reduces to showing that the only
decaying solution of the model half‑line problem
\[
\lambda v + a_0\,\partial_y^{4}v = 0,\qquad y>0,\qquad
v'(0)=v'''(0)=0,
\tag{LS}
\]
where $\lambda\in\C$ with $\Re\lambda\ge0$ (and  $a_0>0$ is the frozen principal coefficient) is the trivial one.

Introduce
\(
\mu:=-\dfrac{\lambda}{a_0}
\)
and write the characteristic equation as
\begin{equation}\label{E:char}
\rho^{4}=\mu .
\end{equation}
If $\lambda=0$, the fundamental system consists of $\{1,t,t^2,t^3\}$, so the only decaying solution is $u\equiv0$, the trivial solution, which establishes LS in that case.

So, let $\rho_0\ne0$ be any fourth root of $\mu\ne0$ and set 
\(
\rho_k:=\rho_0\,e^{i\pi k/2}\;(k=0,1,2,3).
\)
Then $\bigl\{e^{-\rho_k y}\bigr\}_{k=0}^{3}$ is a fundamental system for the ODE, so
\begin{equation}\label{E:gensol}
v(y)=\sum_{k=0}^3 c_k\,e^{-\rho_k y},\qquad c_k\in\C .
\end{equation}
The assumption that the solution decays manifests as follows.
Since $\mu$ has nonpositive real part  (in particular, $\mu\notin(0,\infty)$),
none of its fourth roots lie on the imaginary axis.
Consequently, among the four roots $\{\rho_k\}_{k=0}^3$ there are exactly two with strictly positive real part
and exactly two with strictly negative real part.
Denote the two roots with $\Re\rho>0$ by $\rho_{+}$ and $\rho_{-}$.

A solution \eqref{E:gensol} decays as $y\to\infty$ iff $c_k=0$ for every root with $\Re\rho_k\le0$.
Hence the general decaying solution is given by
\begin{equation}\label{E:decay}
v(y)=c_{+}\,e^{-\rho_{+}y}+c_{-}\,e^{-\rho_{-}y}.
\end{equation}
We apply the boundary conditions $v'(0)=v'''(0)=0$ to \eqref{E:decay}:
\[
\begin{pmatrix}
\rho_{+} & \rho_{-}\\
\rho_{+}^{3} & \rho_{-}^{3}
\end{pmatrix}
\begin{pmatrix}
c_{+}\\ c_{-}
\end{pmatrix}=0.
\]
The determinant is
\(
\rho_{+}\rho_{-}(\rho_{-}^{2}-\rho_{+}^{2})\neq0
\)
because $\rho_{+}\neq\rho_{-}$ and neither is zero.  
Thus $c_{+}=c_{-}=0$ and $u\equiv0$.
Hence $(A(w),B_1,B_2)$ satisfies the hypotheses of
\cite[Theorem 6.3.2]{PS16}.

\emph{Step 3: Maximal $L^p$-regularity.}
We apply \cite[Theorem 6.3.2]{PS16} to obtain that
\begin{align*}
(\partial_t+A(w),\,B_1,\,B_2):
&W^{1,p}(0,T;X_0)\cap L^p(0,T;X_1)
\\&\longrightarrow
L^p(0,T;X_0)\times
L^p\bigl(0,T;W^{3-1/p,p}(\partial I)\bigr)\times
L^p\bigl(0,T;W^{1-1/p,p}(\partial I)\bigr)
\end{align*}
is an isomorphism for every frozen $w\in\mathcal U$, with a time $T>0$
independent of $w$.
Since $\partial I$ consists of two points, the boundary trace spaces above are canonically
isomorphic to $(\C^2)^2$, so the codomain can equivalently be written as
$L^p(0,T;X_0\times(\C^2)^2)$; that is, 
\[
(\partial_t+ A(w), B_1,B_2)
: W^{1,p}(0,T;X_0)\cap L^p(0,T;X_1)
  \longrightarrow L^p(0,T;X_0\times
                 (\C^2)^2)
                 \,.
\]
This setup is compatible with the nonlinearity $G$. 
Set 
\[
E_T:=W^{1,p}(0,T;X_0)\cap L^p(0,T;X_1),
\quad X_0=L^p(I),\ X_1=\{v\in W^{4,p}(I)\,:\,v'(\pm\tfrac{\myL_0}{2})=v'''(\pm\tfrac{\myL_0}{2})=0\}.
\]
By maximal $L^p$-regularity,
\[
E_T \hookrightarrow {C^0}\bigl([0,T];(X_0,X_1)_{1-1/p,p}\bigr)
= {C^0}\bigl([0,T];W^{4-4/p,p}(I)\bigr).
\]
Since $\tilde F\in C^\infty(\R^5)$ and $p>5$ (so $W^{4-4/p,p}(I)\hookrightarrow C^{3,\alpha}(I)$),
the associated map
\[
G:W^{4-4/p,p}(I)\to X_0,\qquad
G(w)=-\tilde F\big(\k_0,
(\k_0)_\myu,
(\k_0)_{\myu\myu},
(\k_0)_{\myu\myu\myu},
w,w_\myu,w_{\myu\myu},w_{\myu\myu\myu}\big),
\]
is $C^\infty$ and locally Lipschitz on bounded sets. Hence, for every $v\in E_T$,
$G(v(\cdot,t))\in X_0$ for a.e.\ $t$, and $G(v)\in L^p(0,T;X_0)$. Moreover, for each $R>0$
there exists $C_R$ such that
\[
\|G(v)-G(w)\|_{L^p(0,T;X_0)}
\ \le\ C_R\,\|v-w\|_{L^p(0,T;X_1)}
\qquad\text{whenever }\ \|v\|_{E_T},\|w\|_{E_T}\le R.
\]
Thus the right-hand side in \eqref{E:PDE} lies in the data space of the isomorphism above.
This underlies the fixed point  argument used in \cite[Theorem 5.1.1]{PS16}, which is itself used in Step 4 below.

\emph{Step 4: Quasilinear fixed point.}
The mappings 
\[
w\mapsto A(w)\in\mathcal L(X_1,X_0)\quad\text{ and }\quad w\mapsto G(w)\in X_0
\]
are $C^\infty$.
With maximal $L^p$-regularity at hand and initial datum $v(0)=0\in (X_0,X_1)_{1-1/p,p}=W^{4-4/p,p}(I)$
satisfying $B_1v(0)=B_2v(0)=0$, \cite[Theorem 5.1.1]{PS16} yields a unique solution
$v\in W^{1,p}(0,T;X_0)\cap L^p(0,T;X_1)$ of \eqref{E:PDE}.
Parabolic bootstrapping then implies (ii), with smoothness up to $t=0$ under the full
compatibility hierarchy; otherwise smoothing holds for $t>0$.
This establishes (i) and (ii).

\emph{Step 5: Normal‑graph reconstruction.}
The solution $v$ is initially zero and so, taking $T>0$ smaller if needed, $v$ will satisfy $|v\k_0| < 1$ for all $t\in[0,T)$.
Lemma \ref{LMnormalgraph} then applies, yielding that the map $w\mapsto \gamma_0+w\normal_0$ produces a solution to the free boundary free elastic flow.
Thus the theorem is proved.
\end{proof}

\begin{remark}
\label{RMreg}
Following the strategy in \cite{PS16}, if $\gamma_0$ were less regular, we could use a smooth curve $\tilde\gamma$ that is arbitrarily close to $\gamma_0$ in an appropriate norm to write the scalar parabolic PDE.
The initial data would be small but not zero.
Then, the regularity of $\gamma_0$ may be dramatically relaxed.
We have not pursued this here.
\end{remark}

Now we obtain local well-posedness for the flow.
Note that the statement below is only existence and uniqueness, however, smoothing and continuous dependence on initial data also clearly hold.

\begin{theorem}
\label{TMste2}
For every $\gamma_0\in\SX$ there exists a maximal time
\[
T_{\max}=T_{\max}(\gamma_0)\in(0,\infty]
\]
and a unique family of curves
\[
\gamma\;\in\;C^\infty\bigl([-1,1]\times(0,T_{\max})\bigr)\cap
{C^0}\bigl([-1,1]\times[0,T_{\max})\bigr)
\]
such that {$\gamma:[-1,1]\times[0,T_{\max})\to\R^2$ is a free boundary free elastic flow with
initial data $\gamma(\cdot,0)=\gamma_0$.}
Moreover, if $T_{\max}<\infty$ then
\begin{equation}
\label{EQcontcrit}
\limsup_{t\uparrow T_{\max}}
\Bigl(||\gamma||_{L^\infty}+\|\,\partial_s^p\k\|_{L^\infty}\Bigr)
      =\infty\,,
\end{equation}
for some $p\in\N_0$.
\end{theorem}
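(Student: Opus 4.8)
The plan is to promote the local theory of Theorem~\ref{TMste} to a maximal smooth flow and then to deduce the continuation criterion \eqref{EQcontcrit} by a compactness-and-restart argument. First I would record local existence, uniqueness and smoothing. Given $\gamma_0\in\SX$, Theorem~\ref{TMste} produces, after reparametrisation, a smooth free boundary free elastic flow on some interval $[0,T)$ depending only on the geometry of $\gamma_0$, and Theorem~\ref{TMste}(ii) gives the instantaneous smoothing for $t>0$. For geometric uniqueness I would argue as follows: any two flows emanating from $\gamma_0$ can, on a short common time interval, each be written as a normal graph $v^{(1)},v^{(2)}$ over $\gamma_0$ once the tangential gauge is fixed as in Lemma~\ref{LMnormalgraph} (purely normal velocity, endpoints sliding along $\eta_{\pm1}$). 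Both normal sections then solve the same quasilinear problem \eqref{E:PDE} with the same zero initial datum, so the uniqueness in Theorem~\ref{TMste}(i) forces $v^{(1)}=v^{(2)}$; an open--closed argument on the set of times where the two flows coincide upgrades this to uniqueness on the whole common interval. Defining $T_{\max}$ as the supremum of existence times, uniqueness lets me glue the local solutions into a single maximal flow with the stated regularity.

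The heart of the matter is the continuation criterion, which I would prove by contradiction. Suppose $T_{\max}<\infty$ but $\|\gamma\|_{L^\infty}+\|\partial_s^p\kappa\|_{L^\infty}$ stays bounded on $[0,T_{\max})$ for every $p\in\N_0$. Working in a convenient gauge --- either an arclength-proportional parametrisation or a fixed normal-graph chart over $\gamma(\cdot,t_0)$ with $t_0$ close to $T_{\max}$ --- I would convert the uniform bounds on $\kappa$ and all its arclength derivatives, together with the position bound, into uniform bounds for $\gamma(\cdot,t)$ in $C^k([-1,1])$ for every $k$; here one must also secure a uniform lower bound $|\gamma_\rho|\ge c>0$ so that the immersion does not degenerate. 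The evolution equation $\partial_t\gamma=-(\kappa_{ss}+\tfrac12\kappa^3)\normal$ then bounds $\partial_t\gamma$ in every $C^k$ norm, yielding uniform H\"older equicontinuity in time. By Arzel\`a--Ascoli the curves converge in $C^\infty$ to a smooth immersion $\gamma(\cdot,T_{\max}):=\lim_{t\uparrow T_{\max}}\gamma(\cdot,t)$. The boundary conditions defining $\SX$ (endpoints on $\eta_{\pm1}$, horizontal tangents, $\kappa_s=0$ at the endpoints) are closed under $C^\infty$-convergence, so $\gamma(\cdot,T_{\max})\in\SX$. Applying Theorem~\ref{TMste} with this curve as new initial data produces a smooth flow on $[T_{\max},T_{\max}+\varepsilon)$, which by uniqueness extends the original flow past $T_{\max}$, contradicting maximality. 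Hence \eqref{EQcontcrit} must hold.

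The \emph{main obstacle} is the compactness step. Translating intrinsic, arclength-derivative bounds on $\kappa$ into bounds in a fixed parametrisation requires controlling the parametrisation itself, and in particular ruling out degeneration of the immersion by maintaining $|\gamma_\rho|\ge c>0$; this is where the tangential gauge and the behaviour of the length $\myL[\gamma(\cdot,t)]$ must be handled with care. Once the uniform $C^\infty$ limit in $\SX$ is secured, the remaining steps --- gluing, passing the boundary conditions to the limit, and restarting the flow --- are routine.
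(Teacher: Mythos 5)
Your proposal is correct and follows essentially the same route as the paper: local existence, smoothing and uniqueness from Theorem~\ref{TMste} via the normal-graph problem \eqref{E:PDE}, a restart procedure to define $T_{\max}$, and the criterion \eqref{EQcontcrit} from the observation that the local theory can only fail through curvature blow-up (tube width), escape of $\|\gamma\|_{L^\infty}$, or loss of regularity $\|\partial_s^p\k\|_{L^\infty}\to\infty$. The only difference is presentational: where the paper directly enumerates these failure modes of the iterated local existence times $T_i$, you phrase the same fact as a contrapositive compactness-and-restart argument (with the non-degeneracy $|\gamma_\rho|\ge c>0$ correctly flagged, and obtainable from $\partial_t\log|\gamma_\myu| = \k(\k_{ss}+\tfrac12\k^3)$ in Lemma~\ref{LMevos}), which is a legitimate fleshing-out of the paper's sketch rather than a different proof.
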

\begin{proof}
Theorem~\ref{TMste} gives a time
$T_1>0$ and a solution
$\gamma:[-1,1]\times[0,T_1)\to\R^2$ to the
 free boundary free elastic flow.
Let $\widetilde\gamma$ be any other, distinct, free elastic flow with the same initial data.  
Because $\gamma_0$ is the common reference curve and both families stay in the same normal tube of radius $\delta$ (from step 5 of the proof of Theorem \ref{TMste}), each can be written as a normal
graph \(\gamma_0+v\normal_0\) and \(\gamma_0+\widetilde v\normal_0\).
Both $v$ and $\widetilde v$ solve the same quasilinear
parabolic problem with the same initial datum~$0$; hence
$v=\widetilde v$ by uniqueness in Theorem~\ref{TMste}.
Consequently $\gamma=\widetilde\gamma$ as long as both exist, establishing uniqueness.

Assume the solution has been constructed on $[0,T_1)$
and satisfies
\(
\|v(\cdot,t)\|_{L^\infty}\le\delta/2
\)
for all $t<T_1$.
Letting $\gamma_1:=\gamma(\,\cdot,T_1/2)$
play the role of a new initial curve, we invoke
Theorem~\ref{TMste} once more to obtain a fresh existence time
interval $[T_1/2,T_1/2+T_2)$ on which the solution
remains a normal graph over $\gamma_{1}$.
Repeating this procedure as much as possible yields a maximal time of existence $T_{\max}\in(0,\infty]$.

If $T_{\max}<\infty$ then the sum of the $T_i$ must be bounded.
The two ways this can happen are
\begin{enumerate}[label=(\roman*)]
\item \emph{tubular neighbourhood width goes to zero:}          $\|\k(\cdot,t)\|_{L^\infty}\to\infty$;
\item \emph{vanishing at infinity:}
$\|\gamma(\cdot,t)\|_{L^\infty}\to\infty$;
\item \emph{loss of regularity:} $\|\partial_s^p\k(\cdot,t)\|_{L^\infty}\to\infty$ for some $p\in\N_0$.
\end{enumerate}
Observing that the criterion \eqref{EQcontcrit} contains all three of these possibilities finishes the proof.
\end{proof}

\begin{remark}
The continuation criterion can be substantially strengthened to
\[
T_{\max}<\infty
\quad\Longrightarrow\quad\limsup_{t\uparrow T_{\max}}
\Bigl(||\gamma||_{L^\infty}+\|\k\|_{L^\infty}\Bigr)
      =\infty\,,
\]
by following the regularity improvement outlined in Remark \ref{RMreg}.
There is possibly scope to go beyond this, however it would require a more subtle approach.
We will not need any improvements to the straightforward condition \eqref{EQcontcrit} here.
\end{remark}

\subsection{Evolution equations}
\label{Sevols}

The fundamental evolution equations are as follows.
Their proof can mostly be found in \cite[Lemmata 2.1 and 2.2]{WW24}, so we omit it.

\begin{lemma}
Let $\gamma:[-1,1]\times[0,T)\to\R^2$ be a free boundary free elastic flow.
Then
\begin{align*}
\partial_t \log |\gamma_\myu|
 &= \k\left(\k_{ss}+\frac12\k^3\right)\,,
\\
[\partial_t,\partial_s]
 &= \partial_t\partial_s - \partial_s\partial_t = -\k\left(\k_{ss}+\frac12\k^3\right)\partial_s\,,
\\
\partial_t\tangent & 
 = \left(-\k_{sss} - \frac32\k_s\k^2\right)\normal\,,
\\
\partial_t\normal &= \left(\k_{sss} + \frac32\k_s\k^2\right)\tangent\,,
\\
\partial_t\k &= -\k_{ssss} - \frac52\k_{ss}\k^2
- 3\k_{s}^2\k
- \frac12\k^5\,,
\\
\partial_t\k_s &= 
- \k_{s^5} 
- \frac52\k_{sss}\k^2
- 12\k_{ss}\k_s\k
- 3\k_{s}^3
- 3\k_s\k^4\,,
\end{align*}
and, {more generally for $\ell\in\N$},
\begin{align*}
\partial_t\k_{s^\ell}
 &= -\k_{s^{(\ell+4)}}
 + \sum_{q+r+u=\ell}\left(c^1_{qru}
    \k_{s^{(q+2)}}
    \k_{s^r}
    \k_{s^u}
    + c^2_{qru}
    \k_{s^{(q+1)}}
    \k_{s^{(r+1)}}
    \k_{s^u}\right)
\\
 &\qquad\qquad+ \sum_{q+r+u+v+w=\ell}c_{qruvw}
    \k_{s^q}
    \k_{s^r}
    \k_{s^u}
    \k_{s^v}
    \k_{s^w}
    \,,
\end{align*}
where $q,r,u,v,w\in\N_0$, {and where} $c^i_{qru}$ and $c_{qruvw}$ are suitable constants.
\label{LMevos}
\end{lemma}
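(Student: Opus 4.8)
The plan is to regard the first six identities as established—they are the content of \cite[Lemmata 2.1 and 2.2]{WW24}—and to concentrate on the general formula for $\partial_t\k_{s^\ell}$, which I would prove by induction on $\ell\in\N$. The base case $\ell=1$ is precisely the displayed expression for $\partial_t\k_s$; comparing it with the claimed form fixes the constants $c^1_{qru},c^2_{qru},c_{qruvw}$ at $\ell=1$, and the analogous comparison for $\partial_t\k$ handles the formally included case $\ell=0$.

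For the step from $\ell$ to $\ell+1$, I would write $\k_{s^{(\ell+1)}}=\partial_s\k_{s^\ell}$ and commute $\partial_t$ past $\partial_s$ using the bracket identity $[\partial_t,\partial_s]=-\k(\k_{ss}+\tfrac12\k^3)\partial_s$ listed above, giving
\[
\partial_t\k_{s^{(\ell+1)}}
=\partial_s\bigl(\partial_t\k_{s^\ell}\bigr)
-\k\Bigl(\k_{ss}+\tfrac12\k^3\Bigr)\k_{s^{(\ell+1)}}.
\]
Substituting the inductive hypothesis into the first term and applying $\partial_s$, the leading term $-\k_{s^{(\ell+4)}}$ differentiates to $-\k_{s^{(\ell+5)}}$, exactly the principal term at level $\ell+1$; the commutator term I expand as $-\k\k_{ss}\k_{s^{(\ell+1)}}-\tfrac12\k^4\k_{s^{(\ell+1)}}$.

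The crux is to check that every remaining contribution lands inside the prescribed families. The decisive structural fact is that these families are closed under $\partial_s$: differentiating a first-type cubic monomial $\k_{s^{(q+2)}}\k_{s^r}\k_{s^u}$ (with $q+r+u=\ell$) by the product rule yields $\k_{s^{(q+3)}}\k_{s^r}\k_{s^u}$ and $\k_{s^{(q+2)}}\k_{s^r}\k_{s^{(u+1)}}$, again of the first type, together with $\k_{s^{(q+2)}}\k_{s^{(r+1)}}\k_{s^u}$, which is of the second type after the relabelling $q\mapsto q+1$—and in each case the index sum is now $\ell+1$. Differentiating a second-type monomial stays second type, and differentiating a quintic monomial stays quintic, each time raising the index sum by one. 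The two commutator pieces fit as well: $-\k\k_{ss}\k_{s^{(\ell+1)}}$ is a first-type cubic term with $(q,r,u)=(0,0,\ell+1)$, and $-\tfrac12\k^4\k_{s^{(\ell+1)}}$ is a quintic term with $(q,r,u,v,w)=(0,0,0,0,\ell+1)$. Gathering like monomials and renaming their coefficients as new suitable constants closes the induction.

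The only genuine difficulty is bookkeeping—ensuring that the product rule never escapes the two cubic patterns, in particular that a derivative falling on a ``lower'' factor of a first-type monomial is correctly reabsorbed into the second type. A convenient safeguard is a parabolic weight count: assigning $\k$ and $\partial_s$ weight $1$ and $\partial_t$ weight $4$, every monomial appearing in $\partial_t\k_{s^\ell}$ must carry total weight $\ell+5$, a balance each of the three families satisfies identically. Since these families also span all monomials of the correct degree and weight and are closed under $\partial_s$, no admissible term can be lost or spuriously created, and the induction is routine once organised this way.
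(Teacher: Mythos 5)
Your proposal is correct and follows essentially the same route the paper relies on: it defers the six basic identities to \cite[Lemmata 2.1 and 2.2]{WW24} (exactly as the paper does, which omits the proof for this reason) and establishes the general formula by the standard induction on $\ell$ via the commutator $[\partial_t,\partial_s]=-\k\left(\k_{ss}+\tfrac12\k^3\right)\partial_s$, with the three monomial families closed under $\partial_s$ up to relabelling of $(q,r,u)$ and $(q,r,u,v,w)$. Your weight-count check (each monomial carrying total weight $\ell+5$) is a sound bookkeeping device consistent with the cited literature, and the placement of the commutator contributions at $(0,0,\ell+1)$ and $(0,0,0,0,\ell+1)$ is exactly right.
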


From Lemma \ref{LMevos} we find, along a free boundary free elastic flow, 
\begin{align}
\notag\frac{d}{dt}\E[\gamma(\cdot,t)]
 &= \frac12\int \left(2\k\left(-\k_{ssss} - \frac52\k_{ss}\k^2
- 3\k_{s}^2\k
- \frac12\k^5\right) + \k^3\left(\k_{ss}+\frac12\k^3\right)\right)\,ds
\\
\notag
 &= -\int \left( \k_{ss}^2 + 2\k_{ss}\k^3
+ 3\k_{s}^2\k^2
+ \frac14\k^6
\right)\,ds
\\
\notag
 &= -\int \left(\k_{ss}^2 + \k_{ss}\k^3
+ \frac14\k^6
\right)\,ds
\notag
\\
 &= -\int \left(\k_{ss} + \frac12\k^3\right)^2\,ds
 = - ||\partial_t\gamma||_{L^2(ds)}^2,
\label{EQenergy}
\end{align}
where we used that $\gamma(\cdot,t)\in\SX$ and the identity (for $\gamma\in\SX$)
\[
\int \k_s^2\k^2\,ds
 = -\frac13\int \k_{ss}\k^3\,ds
 \,.
\]
The calculation \eqref{EQenergy} shows that the free boundary free elastic flow is the steepest descent $L^2(ds)$-gradient flow for $\E$.

We shall need the evolution of the norm of $\gamma$ itself in $L^2(ds)$.

\begin{lemma}
\label{LMgamevo}
Let $\gamma:[-1,1]\times[0,T)\to\R^2$ be a free boundary free elastic flow.
Then
\begin{align*}
\frac{d}{dt}\int |\gamma|^2\,ds
&= - {2\int \left(\k_{ss} + \frac12\k^3\right)\, \gamma\cdot \normal}\,ds
\\&\quad
+ \frac12\int |\gamma|^2\k^4\,ds
- \int \k_{s}^2|\gamma|^2\,ds
- 2\int \k_{s}\k \,{\gamma\cdot \tangent}\,ds
\,.
\end{align*}
\end{lemma}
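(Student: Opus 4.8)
The plan is to differentiate under the integral sign, treating $\int|\gamma|^2\,ds$ as a product of the integrand $|\gamma|^2$ and the evolving length element $ds$. Writing $V:=-(\k_{ss}+\tfrac12\k^3)$ for the normal speed, so that $\partial_t\gamma=V\normal$, I would start from
\[
\frac{d}{dt}\int|\gamma|^2\,ds
 = \int \partial_t(|\gamma|^2)\,ds
 + \int |\gamma|^2\,\partial_t(ds)\,.
\]
Two ingredients feed into this. First, $\partial_t(|\gamma|^2)=2\gamma\cdot\partial_t\gamma=2V\,(\gamma\cdot\normal)$, which immediately produces the first term $-2\int(\k_{ss}+\tfrac12\k^3)\,\gamma\cdot\normal\,ds$. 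Second, the evolution of the length element is read off from Lemma~\ref{LMevos}: since $\partial_t\log|\gamma_\myu|=\k(\k_{ss}+\tfrac12\k^3)$, we have $\partial_t(ds)=\k(\k_{ss}+\tfrac12\k^3)\,ds$, so the measure contributes $\int|\gamma|^2\k(\k_{ss}+\tfrac12\k^3)\,ds$.

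It then remains to massage the measure term into the stated form. Expanding gives $\int|\gamma|^2\k\k_{ss}\,ds+\tfrac12\int|\gamma|^2\k^4\,ds$, and the second summand is already one of the claimed terms. For the first, I would integrate by parts in $s$, moving one derivative off $\k_{ss}$:
\[
\int|\gamma|^2\k\k_{ss}\,ds
 = \Big[|\gamma|^2\k\k_s\Big]_{\partial}
 - \int \partial_s\big(|\gamma|^2\k\big)\,\k_s\,ds\,.
\]
The boundary term vanishes because $\k_s(\pm1)=0$ for curves in $\SX$. Using $\partial_s\gamma=\tangent$, so that $\partial_s(|\gamma|^2)=2\,\gamma\cdot\tangent$, the remaining integral expands as $-2\int\k\k_s\,(\gamma\cdot\tangent)\,ds-\int|\gamma|^2\k_s^2\,ds$, which are precisely the last two terms in the statement. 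Collecting everything yields the claimed identity.

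The computation is essentially bookkeeping once the two evolution ingredients are in place, so I do not anticipate a genuine obstacle. The one point requiring care is the vanishing of the boundary contribution in the integration by parts: this relies precisely on the free boundary condition $\k_s(\pm1)=0$ built into $\SX$, and is the only place where the boundary data enter. A secondary check is the consistency of the sign conventions between the normal speed $V$ and the measure evolution from Lemma~\ref{LMevos}; aligning these correctly is what guarantees that the term $+\tfrac12\int|\gamma|^2\k^4\,ds$ appears with the stated sign.
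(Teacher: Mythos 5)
Your proposal is correct and follows the paper's proof essentially verbatim: both compute $\tfrac{d}{dt}\int|\gamma|^2\,ds$ by combining $\partial_t\gamma = -(\k_{ss}+\tfrac12\k^3)\normal$ with the measure evolution $\partial_t(ds)=\k(\k_{ss}+\tfrac12\k^3)\,ds$ from Lemma~\ref{LMevos}, and then integrate the term $\int|\gamma|^2\k\k_{ss}\,ds$ by parts, with the boundary contribution vanishing via $\k_s(\pm1)=0$ from $\gamma(\cdot,t)\in\SX$. The paper merely states this integration by parts without displaying it, so your written-out version, including the identity $\partial_s(|\gamma|^2)=2\,\gamma\cdot\tangent$, supplies exactly the omitted bookkeeping.
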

\begin{proof}
The flow equation \eqref{FEF} and Lemma \ref{LMevos} imply
\begin{align*}
\frac{d}{dt}\int |\gamma|^2\,ds
&= \int {\gamma \cdot \left(-2\left(\k_{ss} + \frac12\k^3\right)\normal + \k\left(\k_{ss} + \frac12\k^3\right)\gamma\right)}\,ds
\\&= -{2\int \left(\k_{ss} + \frac12\k^3\right)\,\gamma\cdot \normal}\,ds
+ \frac12\int |\gamma|^2\k^4\,ds
+ \int \k_{ss}|\gamma|^2\k\,ds
\,.
\end{align*}
Now we integrate by parts, using $\gamma(\cdot,t)\in\SX$,  to obtain the desired result.
\end{proof}

In the following lemma we record the evolution of the length $\myL$ and the scale-invariant energy $\siE$.

\begin{lemma}
\label{LMlengthevo}
Let $\gamma:[-1,1]\times[0,T)\to\R^2$ be a free boundary free elastic flow.
Then
\begin{equation}
\label{EQlengthevo}
\frac{d}{dt}\myL[\gamma(\cdot,t)]
 = \int \k\left(\k_{ss}+\frac12\k^3\right)\,ds
 = -\int \k_s^2\,ds
    + \frac12\int \k^4\,ds
\end{equation}
and
\[
\frac{d}{dt}\siE[\gamma(\cdot,t)]
=  -\E[(\gamma(\cdot,t)]\left(\int \k_s^2\,ds
- \frac12\int \k^4\,ds\right)
  -\myL[(\gamma(\cdot,t)]\int \left(\k_{ss} + \frac12\k^3\right)^2\,ds 
  \,.
\]
\end{lemma}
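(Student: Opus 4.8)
The plan is to prove the length evolution \eqref{EQlengthevo} first, and then deduce the scale-invariant energy identity by combining it with the energy dissipation formula \eqref{EQenergy} through the product rule.

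For \eqref{EQlengthevo}, I would start from $\myL[\gamma(\cdot,t)] = \int_{-1}^1 |\gamma_\myu|\,d\myu$ and differentiate under the integral sign. The variation of the arclength element is furnished immediately by the first line of Lemma \ref{LMevos}: since $\partial_t\log|\gamma_\myu| = \k(\k_{ss}+\frac12\k^3)$, we have $\partial_t|\gamma_\myu| = |\gamma_\myu|\,\k(\k_{ss}+\frac12\k^3)$. Reabsorbing $|\gamma_\myu|\,d\myu = ds$ yields the first equality $\frac{d}{dt}\myL = \int\k(\k_{ss}+\frac12\k^3)\,ds$. For the second equality I would expand the integrand and integrate $\int\k\k_{ss}\,ds$ by parts once; the boundary term $\big[\k\k_s\big]_{-1}^{1}$ vanishes because $\gamma(\cdot,t)\in\SX$ forces $\k_s(\pm1)=0$. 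This leaves $\int\k\k_{ss}\,ds = -\int\k_s^2\,ds$, and hence $\frac{d}{dt}\myL = -\int\k_s^2\,ds + \frac12\int\k^4\,ds$.

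For the scale-invariant energy I would use the defining relation $\siE = \myL\,\E$ and the product rule,
\[
\frac{d}{dt}\siE = \Big(\frac{d}{dt}\myL\Big)\E + \myL\,\frac{d}{dt}\E,
\]
then substitute \eqref{EQlengthevo} into the first factor and the dissipation identity \eqref{EQenergy}, namely $\frac{d}{dt}\E = -\int(\k_{ss}+\frac12\k^3)^2\,ds$, into the second. Regrouping the first factor as $-\big(\int\k_s^2\,ds - \frac12\int\k^4\,ds\big)$ produces exactly the stated formula.

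The argument is routine and I do not anticipate any substantial obstacle. The only place demanding care is the bookkeeping of boundary contributions in the single integration by parts, which is entirely controlled by the membership $\gamma(\cdot,t)\in\SX$ (specifically $\k_s(\pm1)=0$). Once \eqref{EQlengthevo} is in hand, the scale-invariant identity is purely algebraic, relying on nothing beyond \eqref{EQenergy} and the product rule.
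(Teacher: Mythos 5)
Your proposal is correct and follows essentially the same route as the paper: the first identity comes from the evolution of $\log|\gamma_\myu|$ in Lemma \ref{LMevos}, the second from one integration by parts using $\k_s(\pm1)=0$ (i.e.\ $\gamma(\cdot,t)\in\SX$), and the $\siE$ identity from the product rule together with \eqref{EQenergy}. Your write-up merely makes explicit the bookkeeping that the paper compresses into one line, so there is nothing to correct.
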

\begin{proof}
Lemma~\ref{LMevos} together with integration by parts, using also that $\gamma(\cdot,t)\in\SX$, gives
\[
\frac{d}{dt}\myL[\gamma(\cdot,t)]
 = \int \k\left(\k_{ss}+\frac12\k^3\right)\,ds
 = -\int \k_s^2\,ds
    +\frac12\int \k^4\,ds
    \,.
\]
Combining the above with \eqref{EQenergy} yields the evolution of $\siE[\gamma(\cdot,t)]$.
\end{proof}

Regularity can be obtained by studying the evolution of integrals of derivatives of curvature.

\begin{lemma}
Let $\gamma:[-1,1]\times[0,T)\rightarrow\R^2$ be a free boundary free elastic flow.
Then
\begin{equation}
\label{EQevolksint}
\frac{d}{dt} \int \k_s^2\,ds
= 
- 2\int
    \k_{sss}^2
\,ds
+ 5\int 
  \k_{ss}^2\k^2  
\,ds
-\frac53\int
    \k_s^4
\,ds
-\frac{11}2\int 
    \k_s^2\k^4
\,ds
\,,
\end{equation}
and in general, {for $\ell\in\N$},
\begin{align*}
	\frac{d}{dt}\int \k_{s^\ell}^2\,ds
	&=
	- 2\int \k_{s^{(\ell+2)}}^2\,ds
	+ \sum_{q+r+u=\ell} \int \k_{s^\ell}\left(c^1_{qru}
    \k_{s^{(q+2)}}
    \k_{s^r}
    \k_{s^u}
    + c^2_{qru}
    \k_{s^{(q+1)}}
    \k_{s^{(r+1)}}
    \k_{s^u}\right) \,ds
	\\&\qquad
 	+ \sum_{q+r+u+v+w=\ell} c_{qruvw} \int \k_{s^\ell}\k_{s^q}\k_{s^r}\k_{s^u}\k_{s^v}\k_{s^w}\,ds
	\,,
\end{align*}
where $q,r,u,v,w\in\N_0$, {and where} $c^i_{qru}$ and $c_{qruvw}$ are suitable constants.
\label{LMhigherevos}
\end{lemma}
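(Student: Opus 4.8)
The plan is to differentiate under the integral while accounting for the time-dependence of the arclength measure, and then substitute the evolution equation for $\k_{s^\ell}$ from Lemma~\ref{LMevos}. Since $\partial_t\log|\gamma_\myu| = \k(\k_{ss}+\tfrac12\k^3)$, the measure evolves by $\partial_t(ds) = \k(\k_{ss}+\tfrac12\k^3)\,ds$, so the product rule gives
\[
\frac{d}{dt}\int\k_{s^\ell}^2\,ds
 = 2\int\k_{s^\ell}\,\partial_t\k_{s^\ell}\,ds
 + \int\k_{s^\ell}^2\,\k\Bigl(\k_{ss}+\tfrac12\k^3\Bigr)\,ds.
\]
Feeding in $\partial_t\k_{s^\ell} = -\k_{s^{(\ell+4)}} + (\text{cubic}) + (\text{quintic})$ from Lemma~\ref{LMevos} isolates a single top-order contribution $-2\int\k_{s^\ell}\k_{s^{(\ell+4)}}\,ds$, together with terms that are already products of curvature derivatives.

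Next I would integrate the top-order term by parts twice in arclength:
\[
-2\int\k_{s^\ell}\k_{s^{(\ell+4)}}\,ds
 = -2\bigl[\k_{s^\ell}\k_{s^{(\ell+3)}}\bigr] + 2\bigl[\k_{s^{(\ell+1)}}\k_{s^{(\ell+2)}}\bigr] - 2\int\k_{s^{(\ell+2)}}^2\,ds.
\]
The crucial point is that the two boundary brackets vanish: each pairs an even- with an odd-order derivative of $\k$, so it suffices to know that \emph{all odd arclength derivatives of $\k$ vanish at $\pm1$}. This is the hierarchy generated by $\SX$. Indeed, $\k_s(\pm1,t)=0$ is given; differentiating $\normal(\pm1,t)\cdot e_1=0$ in time and using $\partial_t\normal = (\k_{sss}+\tfrac32\k_s\k^2)\tangent$ together with $\tangent\cdot e_1\ne0$ forces $\k_{sss}(\pm1)=0$. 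One then argues by strong induction on the order: differentiating the time-invariant condition $\k_{s^{(2m-3)}}(\pm1,t)=0$ gives, via Lemma~\ref{LMevos}, $0 = \partial_t\k_{s^{(2m-3)}} = -\k_{s^{(2m+1)}} + (\text{lower terms})$ at the boundary, and every lower (cubic or quintic) term has odd total order $\le 2m-1$, hence contains an odd-order factor of order $<2m+1$ that vanishes by the inductive hypothesis. Thus $\k_{s^{(2m+1)}}(\pm1,t)=0$ for all $m$, both brackets vanish, and the top-order term reduces to $-2\int\k_{s^{(\ell+2)}}^2\,ds$.

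It then remains to organise the surviving lower-order terms, and for the general formula no further integration by parts is needed: the cubic and quintic pieces of $2\k_{s^\ell}\partial_t\k_{s^\ell}$ are, by construction, of the forms $\k_{s^\ell}\k_{s^{(q+2)}}\k_{s^r}\k_{s^u}$, $\k_{s^\ell}\k_{s^{(q+1)}}\k_{s^{(r+1)}}\k_{s^u}$ with $q+r+u=\ell$, and $\k_{s^\ell}\k_{s^q}\cdots\k_{s^w}$ with $q+r+u+v+w=\ell$, matching the claimed template. A homogeneity (scaling-dimension) check then confirms that the two moving-measure contributions, $\k_{s^\ell}^2\k\k_{ss}$ and $\tfrac12\k_{s^\ell}^2\k^4$, fall respectively into the cubic family (via $q=0,r=\ell,u=0$) and the quintic family (via $q=\ell,r=u=v=w=0$). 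Absorbing everything into the unspecified constants $c^i_{qru}$, $c_{qruvw}$ closes the general case. For the explicit case $\ell=1$ one must additionally reach the stated closed form: using $\partial_t\k_s = -\k_{s^5}-\tfrac52\k_{sss}\k^2-12\k_{ss}\k_s\k-3\k_s^3-3\k_s\k^4$, I would integrate the $-5\k_s\k_{sss}\k^2$ term via $\k_s\k_{sss}=\partial_s(\k_s\k_{ss})-\k_{ss}^2$, and collapse the resulting $\int\k_s^2\k_{ss}\k\,ds$ through $\k_s^2\k_{ss}=\tfrac13\partial_s(\k_s^3)$ and $\k_s(\pm1)=0$; collecting coefficients ($-13\int\k_s^2\k_{ss}\k = \tfrac{13}3\int\k_s^4$, etc.) yields exactly $-2\int\k_{sss}^2 + 5\int\k_{ss}^2\k^2 - \tfrac53\int\k_s^4 - \tfrac{11}2\int\k_s^2\k^4$.

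The main obstacle is the justification that the boundary brackets vanish, i.e.\ establishing the full hierarchy $\k_{s^{(2m+1)}}(\pm1,t)=0$; this is where the geometry of $\SX$ genuinely enters. Everything else — the product rule with moving measure, the two integrations by parts, the parity/homogeneity bookkeeping, and the elementary identities in the $\ell=1$ reduction — is routine once that hierarchy is in hand.
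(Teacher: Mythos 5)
Your proposal is correct and follows essentially the same route as the paper: differentiate under the moving arclength measure, substitute the evolution of $\k_{s^\ell}$ from Lemma~\ref{LMevos}, integrate the top-order term by parts twice, and absorb the measure-evolution terms into the cubic and quintic families, with the explicit $\ell=1$ bookkeeping matching the paper's computation exactly. The only difference is that where the paper eliminates boundary terms by citing \cite[Lemma 2.6]{WW24}, you prove the needed hierarchy $\k_{s^{(2m+1)}}(\pm1,t)=0$ inline via a correct parity induction on the time-differentiated boundary conditions, which makes the argument self-contained but does not change the method.
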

\begin{proof}
Lemma \ref{LMevos} and then integration by parts (with $\gamma(\cdot,t)\in\SX$ and \cite[Lemma 2.6]{WW24}) implies
\begin{align*}
\frac{d}{dt} \int \k_s^2\,ds
&= 
\int \k_s\left(-2\k_{s^5} 
- 5\k_{sss}\k^2
- 24\k_{ss}\k_s\k
- 6\k_{s}^3
- 6\k_s\k^4
\right)\,ds
\\&\quad
 + \int \k_s^2\k\left(\k_{ss}+\frac12\k^3\right)\,ds
\\
&= 
- 2\int
    \k_{sss}^2
\,ds
- 5\int 
  \k_{sss}\k_s\k^2  
\,ds
- 23\int
    \k_{ss}\k_s^2\k
\,ds
-6\int
    \k_s^4
\,ds
-\frac{11}2\int 
    \k_s^2\k^4
\,ds
\\
&= 
- 2\int
    \k_{sss}^2
\,ds
+ 5\int 
  \k_{ss}^2\k^2  
\,ds
- 13\int
    \k_{ss}\k_s^2\k
\,ds
-6\int
    \k_s^4
\,ds
-\frac{11}2\int 
    \k_s^2\k^4
\,ds
\\
&= 
- 2\int
    \k_{sss}^2
\,ds
+ 5\int 
  \k_{ss}^2\k^2  
\,ds
-\frac53\int
    \k_s^4
\,ds
-\frac{11}2\int 
    \k_s^2\k^4
\,ds
\,.
\end{align*}
For the general case we again use Lemma \ref{LMevos} for the evolution of {$\k_{s^\ell}$} and $\gamma(\cdot,t)\in\SX$, \cite[Lemma 2.6]{WW24} to eliminate boundary terms, finding
\begin{align*}
	\frac{d}{dt}\int \k_{s^\ell}^2\,ds
	&=
	\frac12\int \left(4\k_{s^\ell}\partial_t \k_{s^\ell} + \k_{s^\ell}^2(2\k\k_{ss} + \k^4)\right)\,ds
	\\&=
	\frac12\int \k_{s^\ell}^2(2\k\k_{ss} + \k^4)\,ds
	- 2\int \k_{s^\ell}\k_{s^{(\ell+4)}} \,ds
	\\&\qquad
	+ \sum_{q+r+u=\ell} \int \k_{s^\ell}\left(c^1_{qru}
    \k_{s^{(q+2)}}
    \k_{s^r}
    \k_{s^u}
    + c^2_{qru}
    \k_{s^{(q+1)}}
    \k_{s^{(r+1)}}
    \k_{s^u}\right) \,ds
	\\&\qquad
 	+ \sum_{q+r+u+v+w=\ell} c_{qruvw} \int \k_{s^\ell}\k_{s^q}\k_{s^r}\k_{s^u}\k_{s^v}\k_{s^w}\,ds
	\\&=
	- 2\int \k_{s^{(\ell+2)}}^2\,ds
	+ \sum_{q+r+u=\ell} \int \k_{s^\ell}\left(c^1_{qru}
    \k_{s^{(q+2)}}
    \k_{s^r}
    \k_{s^u}
    + c^2_{qru}
    \k_{s^{(q+1)}}
    \k_{s^{(r+1)}}
    \k_{s^u}\right) \,ds
	\\&\qquad
 	+ \sum_{q+r+u+v+w=\ell} c_{qruvw} \int \k_{s^\ell}\k_{s^q}\k_{s^r}\k_{s^u}\k_{s^v}\k_{s^w}\,ds
	\,,
\end{align*}
as required.
\end{proof}

\subsection{Estimates}
\label{Sests}
The proof of global existence in Theorem~\ref{TMlte}, {below}, relies on estimates for $|\gamma|$ and {$\k_{s^\ell}$} that are uniform on compact time intervals.

In this section we articulate the evolution equations from Section \ref{Sevols} to these a-priori estimates.

We first present an estimate for length.

\begin{lemma}
\label{LMlengthest}
Let $\gamma:[-1,1]\times[0,T)\rightarrow\R^2$ be a free boundary free elastic flow.
Then
\[
\myL[\gamma(\cdot,t)]
\le \myL[\gamma(\cdot,0)] 
+ {\frac{1+2t}2\E[\gamma(\cdot,0)]}
\,.
\]
\end{lemma}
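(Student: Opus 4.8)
The plan is to avoid estimating the two terms $-\int\k_s^2\,ds$ and $\tfrac12\int\k^4\,ds$ on the right of \eqref{EQlengthevo} separately, and instead to keep the length derivative in its unexpanded form
$\frac{d}{dt}\myL=\int\k\bigl(\k_{ss}+\tfrac12\k^3\bigr)\,ds$
and pair it against the energy dissipation \eqref{EQenergy}, namely
$\frac{d}{dt}\E=-\int\bigl(\k_{ss}+\tfrac12\k^3\bigr)^2\,ds$.
The key observation is that \eqref{EQlengthevo} tests $\k$ against exactly the quantity $\k_{ss}+\tfrac12\k^3$ whose square controls the decay of $\E$, so a single Cauchy--Schwarz inequality couples length growth directly to energy loss.

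Concretely, I would first apply Cauchy--Schwarz in $L^2(ds)$ to the first equality in \eqref{EQlengthevo}:
\[
\frac{d}{dt}\myL[\gamma(\cdot,t)]
= \int\k\Bigl(\k_{ss}+\tfrac12\k^3\Bigr)\,ds
\le \Bigl(\int\k^2\,ds\Bigr)^{1/2}
     \Bigl(\int\bigl(\k_{ss}+\tfrac12\k^3\bigr)^2\,ds\Bigr)^{1/2}.
\]
Recognising $\int\k^2\,ds=2\E[\gamma(\cdot,t)]$ and, by \eqref{EQenergy}, $\int(\k_{ss}+\tfrac12\k^3)^2\,ds=-\tfrac{d}{dt}\E[\gamma(\cdot,t)]$, this reads $\frac{d}{dt}\myL\le\sqrt{2\E}\,\bigl(-\tfrac{d}{dt}\E\bigr)^{1/2}$. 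Since \eqref{EQenergy} also shows that $\E$ is non-increasing, I may replace $\E$ under the first root by $\E[\gamma(\cdot,0)]$, giving $\frac{d}{dt}\myL\le\sqrt{2\E[\gamma(\cdot,0)]}\,\bigl(-\tfrac{d}{dt}\E\bigr)^{1/2}$.

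Next I would integrate over $[0,t]$ and apply Cauchy--Schwarz a second time, now in the time variable:
\[
\myL[\gamma(\cdot,t)]-\myL[\gamma(\cdot,0)]
\le \sqrt{2\E[\gamma(\cdot,0)]}\int_0^t\Bigl(-\tfrac{d}{d\tau}\E\Bigr)^{1/2}d\tau
\le \sqrt{2\E[\gamma(\cdot,0)]}\,\sqrt{t}\,
     \Bigl(\E[\gamma(\cdot,0)]-\E[\gamma(\cdot,t)]\Bigr)^{1/2}.
\]
Bounding $\E[\gamma(\cdot,0)]-\E[\gamma(\cdot,t)]\le\E[\gamma(\cdot,0)]$ yields $\myL(t)-\myL(0)\le\sqrt{2t}\,\E[\gamma(\cdot,0)]$, and the proof closes with the elementary AM--GM estimate $\sqrt{2t}=\sqrt{1\cdot 2t}\le\tfrac12(1+2t)$, which gives precisely the stated bound.

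I do not expect any serious obstacle: the argument is essentially routine. The only substantive step is the decision to keep $\int\k(\k_{ss}+\tfrac12\k^3)\,ds$ unexpanded and to recognise the identical integrand $\k_{ss}+\tfrac12\k^3$ in the energy dissipation, so that the spatial Cauchy--Schwarz converts length growth into the square root of energy loss. Everything downstream---monotonicity of $\E$, the time-integrated Cauchy--Schwarz, and the final AM--GM inequality---is standard and needs no further input.
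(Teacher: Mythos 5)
Your proof is correct, and it takes a slightly different route from the paper's. The paper applies the pointwise Cauchy (Young) inequality $\k\bigl(\k_{ss}+\tfrac12\k^3\bigr)\le\tfrac12\k^2+\tfrac12\bigl(\k_{ss}+\tfrac12\k^3\bigr)^2$ to the first equality in \eqref{EQlengthevo}, giving $\frac{d}{dt}\myL\le\E+\tfrac12\bigl(-\frac{d}{dt}\E\bigr)$, and then integrates in time using monotonicity of $\E$ for the first term (yielding $t\,\E[\gamma(\cdot,0)]$) and the energy identity \eqref{EQenrgyid} for the second (yielding $\tfrac12\E[\gamma(\cdot,0)]$). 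You instead keep the product structure intact through two Cauchy--Schwarz applications, one in space and one in time, which buys you the strictly stronger intermediate estimate $\myL[\gamma(\cdot,t)]\le\myL[\gamma(\cdot,0)]+\sqrt{2t}\,\E[\gamma(\cdot,0)]$ with sublinear growth in $t$; your final AM--GM step $\sqrt{2t}\le\tfrac12(1+2t)$ then deliberately discards this improvement to match the stated bound. In effect the two proofs differ only in where the product is split: the paper splits it pointwise in time (Young), while you split it after time integration (Cauchy--Schwarz in $\tau$), and the ingredients---the pairing of $\k$ against $\k_{ss}+\tfrac12\k^3$, monotonicity of $\E$, and the energy identity---are the same in both. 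All your steps are valid (the sign $-\frac{d}{dt}\E\ge0$ from \eqref{EQenergy} is what legitimises the square roots), so this is a clean alternative proof, and arguably worth recording since the $\sqrt{t}$ bound is sharper for large times.
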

\begin{proof}
The first equality in \eqref{EQlengthevo} and the Cauchy inequality {imply}
\[
\frac{d}{dt}\myL[\gamma(\cdot,t)]
 \le \frac12\int \k^2\,ds
 + \frac12\int \left(\k_{ss}+\frac12\k^3\right)^2\,ds\,.
\]
The gradient flow property and integration thus gives the estimate
\[
\myL[\gamma(\cdot,t)]
\le \myL[\gamma(\cdot,0)] + t\E[\gamma(\cdot,0)]
+ \frac12\int_0^t \int\left(\k_{ss}+\frac12\k^3\right)^2\,ds\,d\hat t
\,.
\]
Using again the gradient flow property, in particular the energy identity
\begin{equation}
\label{EQenrgyid}
\E[\gamma(\cdot,t)] + \int_0^t \int\left(\k_{ss}+\frac12\k^3\right)^2\,ds\,d\hat t = \E[\gamma(\cdot,0)],
\end{equation}
gives the estimate
\[
\myL[\gamma(\cdot,t)]
\le \myL[\gamma(\cdot,0)] 
+ t\E[\gamma(\cdot,0)]
+ \frac12\E[\gamma(\cdot,0)]
\,,
\]
which finishes the proof.
\end{proof}

\begin{lemma}
\label{LMksest}
Let $\gamma:[-1,1]\times[0,T)\rightarrow\R^2$ be a free boundary free elastic flow.
Then
\[
\int \k_s^2\,ds \le \int \k_s^2\,ds\bigg|_{t=0}
+ {Ct\E^7[\gamma(\cdot,0)]},
\]
where $C$ is an absolute constant.
\end{lemma}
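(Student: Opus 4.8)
The plan is to estimate the right-hand side of the evolution identity \eqref{EQevolksint} from Lemma \ref{LMhigherevos} purely in terms of the scale-invariant energy. Of the four terms in
\[
\frac{d}{dt}\int \k_s^2\,ds = -2\int \k_{sss}^2\,ds + 5\int \k_{ss}^2\k^2\,ds - \tfrac53\int \k_s^4\,ds - \tfrac{11}2\int \k_s^2\k^4\,ds\,,
\]
the two quartic terms are manifestly non-positive and I would simply discard them. The single dangerous term is $5\int\k_{ss}^2\k^2\,ds$, and the whole point is to absorb it into the good negative term $-2\int\k_{sss}^2\,ds$, which I therefore retain.

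The heart of the matter is a Gagliardo--Nirenberg interpolation inequality of Dziuk--Kuwert--Schätzle type. A scaling count---the integrand $\k_{ss}^2\k^2$ carries four factors of $\k$ and a net four arclength derivatives, hence has the dimension $[\mathrm{length}]^{-7}$ shared by $\int\k_{sss}^2\,ds$ and by $\big(\int\k^2\,ds\big)^{7}$---singles out the homogeneous estimate
\[
\int \k_{ss}^2\k^2\,ds \le C\left(\int \k_{sss}^2\,ds\right)^{5/6}\left(\int \k^2\,ds\right)^{7/6}
\]
for curves in $\SX$. The boundary terms created on integrating by parts are annihilated by the free boundary condition $\k_s(\pm1)=0$ together with the endpoint relations recorded in \cite{WW24}. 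Applying Young's inequality with conjugate exponents $6/5$ and $6$ (which also absorbs the prefactor $5$) then gives $5\int\k_{ss}^2\k^2\,ds \le 2\int\k_{sss}^2\,ds + C\big(\int\k^2\,ds\big)^{7}$ with an absolute constant $C$. Any scale-dependent remainder produced by the interpolation carries a strictly negative power of the length and is tamed by an absolute constant via the geometric bound $\myL[\gamma(\cdot,t)]\ge \dist(\eta_{-1}(\R),\eta_{+1}(\R))=2$, valid because every admissible curve joins the two supporting lines.

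After this absorption the two top-order terms cancel and there remains $\frac{d}{dt}\int\k_s^2\,ds \le C\big(\int\k^2\,ds\big)^{7}$. Since $\int\k^2\,ds = 2\E[\gamma(\cdot,t)]$ and the flow is the steepest descent $L^2$-gradient flow for $\E$, the energy identity \eqref{EQenergy} gives $\E[\gamma(\cdot,t)]\le\E[\gamma(\cdot,0)]$, whence $\frac{d}{dt}\int\k_s^2\,ds \le C\E^7[\gamma(\cdot,0)]$; integrating from $0$ to $t$ delivers the stated bound. I expect the interpolation/absorption step to be the main obstacle: one must produce the homogeneous inequality with a genuinely universal constant, check that the $\SX$ boundary conditions kill every boundary term, and ensure that the residual length-dependent pieces are controlled by $\myL\ge2$ rather than re-introducing the uncontrolled quantity $\int\k_s^2\,ds$ on the right. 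The remaining steps---dropping the favourable signs, Young's inequality, and the time integration---are routine.
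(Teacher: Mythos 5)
Your proposal is correct and takes essentially the same route as the paper: both discard the two favourable quartic terms, absorb $5\int\k_{ss}^2\k^2\,ds$ into $-2\int\k_{sss}^2\,ds$ via the Dziuk--Kuwert--Sch\"atzle-type interpolation for curve segments (with the $\SX$ boundary conditions eliminating boundary terms and the trivial bound $\myL[\gamma(\cdot,t)]\ge2$ taming scale-dependent remainders), and conclude using the gradient-flow monotonicity $\E[\gamma(\cdot,t)]\le\E[\gamma(\cdot,0)]$ followed by integration in time. The only cosmetic difference is that you derive the absorption from the homogeneous multiplicative inequality with exponents $5/6$ and $7/6$ plus Young's inequality with conjugate exponents $6/5$ and $6$, whereas the paper quotes the additive $\delta$-form directly and chooses $\delta=1/5$.
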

\begin{proof}
Two applications of a general interpolation inequality (due to  Dziuk-Kuwert-Sch\"atzle \cite[Proposition 2.5]{DKS02} for closed curves and \cite[Lemma 4.1, Appendix C]{dall2014} for curve segments) {yield}
\[
\int \k_{ss}^2\k^2\,ds
 \le \delta \int \k_{sss}^2\,ds + C(\delta)\E^7[\gamma(\cdot,t)]
 \,.
\]
To see this, apply the H\"older inequality with $p=q=2$ and then interpolate, noting that our boundary conditions imply that we may estimate $||\k_{s^m}||^2_{L^2} \le ||\k||_{L^2}||\k_{s^{M}}||_{L^2}$ for any $0<m<M$, allowing us to keep only the lowest and highest seminorms.
The constant $C(\delta)$ depends only on $\delta$.
A uniform lower bound for length is needed, which we trivially have in our setting ($\myL[\gamma(\cdot,t)] \ge 2$).
Choosing $\delta = 1/5$ and using this to estimate the sole term with an unfavourable sign in \eqref{EQevolksint} yields
\[
\frac{d}{dt} \int \k_s^2\,ds
\le 
 C\E^7[\gamma(\cdot,0)]
\,,
\]
which upon integration gives the estimate.
\end{proof}

The estimate for $|\gamma|$ is below.

\begin{lemma}
\label{LMgamest}
Let $\gamma:[-1,1]\times[0,T)\rightarrow\R^2$ be a free boundary free elastic flow.
Then
\[
||\gamma||_{L^\infty}^2 \le
C\left(1+e^{Ct^5}\right)
\,,
\]
where $C$ is a constant depending only on $\|\gamma(\cdot,0)\|_{L^2}$, $\myL[\gamma(\cdot,0)]$, $\E[\gamma(\cdot,0)]$ and $\|\k_s(\cdot,0)\|_{L^2}$.
\end{lemma}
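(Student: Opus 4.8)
The plan is to control the spatial $L^2$-norm $y(t):=\int|\gamma(\cdot,t)|^2\,ds=\|\gamma(\cdot,t)\|_{L^2(ds)}^2$ by a Gronwall argument based on Lemma~\ref{LMgamevo}, and then to upgrade this bound to an $L^\infty$ bound using that $\gamma$ is $1$-Lipschitz in arclength together with the length estimate of Lemma~\ref{LMlengthest}. All constants will be tracked to depend only on the four data listed.

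First I would estimate the four terms in Lemma~\ref{LMgamevo}. The term $-\int\k_s^2|\gamma|^2\,ds$ has a favourable sign and is discarded. Using $|\gamma\cdot\normal|\le|\gamma|$, Cauchy--Schwarz and $2ab\le a^2+b^2$ give
\[
-2\int\Big(\k_{ss}+\tfrac12\k^3\Big)\gamma\cdot\normal\,ds
\le 2\,\Big\|\k_{ss}+\tfrac12\k^3\Big\|_{L^2}\sqrt{y}
\le \Big\|\k_{ss}+\tfrac12\k^3\Big\|_{L^2}^2 + y,
\]
where, by the energy identity~\eqref{EQenrgyid}, the first summand is $L^1$ in time with $\int_0^t\|\k_{ss}+\tfrac12\k^3\|_{L^2}^2\,d\hat t\le\E[\gamma(\cdot,0)]$. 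Likewise $|\gamma\cdot\tangent|\le|\gamma|$ yields
\[
-2\int\k_s\k\,\gamma\cdot\tangent\,ds
\le 2\|\k\|_{L^\infty}\|\k_s\|_{L^2}\sqrt{y}
\le \|\k\|_{L^\infty}^2\|\k_s\|_{L^2}^2 + y,
\]
while the remaining term is bounded by $\tfrac12\int|\gamma|^2\k^4\,ds\le\tfrac12\|\k\|_{L^\infty}^4\,y$. Collecting these,
\[
y'(t)\le\Big(2+\tfrac12\|\k\|_{L^\infty}^4\Big)y + \Big\|\k_{ss}+\tfrac12\k^3\Big\|_{L^2}^2 + \|\k\|_{L^\infty}^2\|\k_s\|_{L^2}^2.
\]

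The decisive point is that every curvature quantity here grows at most polynomially in $t$ with constants depending only on the data. Indeed $\|\k\|_{L^2}^2=2\E[\gamma(\cdot,t)]\le2\E[\gamma(\cdot,0)]$ by \eqref{EQenergy}, and Lemma~\ref{LMksest} gives $\|\k_s\|_{L^2}^2\le\|\k_s(\cdot,0)\|_{L^2}^2+Ct\,\E^7[\gamma(\cdot,0)]$. Feeding these into the one-dimensional interpolation (Agmon) inequality $\|\k\|_{L^\infty}^2\le \tfrac{C}{\myL}\|\k\|_{L^2}^2+C\|\k\|_{L^2}\|\k_s\|_{L^2}$ and using $\myL\ge2$ bounds $\|\k\|_{L^\infty}^2$ — and hence the coefficient $2+\tfrac12\|\k\|_{L^\infty}^4$ and the forcing $\|\k\|_{L^\infty}^2\|\k_s\|_{L^2}^2$ — by a polynomial in $t$. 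Gronwall's inequality then yields $y(t)\le C(1+e^{Ct^5})$, the (generous) exponent $t^5$ arising upon integrating the polynomial coefficient and forcing in time. To convert this to an $L^\infty$ bound I use that $\partial_s\gamma=\tangent$ has unit length, so $\gamma$ is $1$-Lipschitz as a function of arclength $s\in[0,\myL]$; choosing $s_0$ with $|\gamma(s_0)|^2\le\tfrac1{\myL}\int|\gamma|^2\,ds$ gives $|\gamma(s)|\le|\gamma(s_0)|+|s-s_0|$ for every $s$, whence $\|\gamma\|_{L^\infty}^2\le\tfrac{2}{\myL}y+2\myL^2$. Inserting the bound on $y$ and the length bound $\myL[\gamma(\cdot,t)]\le\myL[\gamma(\cdot,0)]+\tfrac{1+2t}2\E[\gamma(\cdot,0)]$ from Lemma~\ref{LMlengthest}, and absorbing the polynomial factors into the exponential, produces the claimed $\|\gamma\|_{L^\infty}^2\le C(1+e^{Ct^5})$.

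The main obstacle is the coupling in the term $\tfrac12\int|\gamma|^2\k^4\,ds$, which ties the quantity being estimated back to the curvature: a direct integration of Lemma~\ref{LMgamevo} fails, and one is forced into a Gronwall scheme with coefficient $\tfrac12\|\k\|_{L^\infty}^4$. Making this effective requires a pointwise-in-time control of $\|\k\|_{L^\infty}$, which is exactly what Lemma~\ref{LMksest} (linear growth of $\int\k_s^2\,ds$), the monotonicity $\E[\gamma(\cdot,t)]\le\E[\gamma(\cdot,0)]$, and the interpolation inequality jointly supply. Keeping all coefficients polynomial in $t$ is what guarantees finiteness on every compact time interval rather than a spurious finite-time blow-up, thereby isolating curvature concentration as the genuine obstruction encoded in the continuation criterion~\eqref{EQcontcrit}.
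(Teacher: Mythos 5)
Your proposal is correct and follows essentially the same route as the paper: a Gronwall argument for $\int|\gamma|^2\,ds$ based on Lemma~\ref{LMgamevo}, with the coefficient $\|\k\|_{L^\infty}^4$ and the forcing terms controlled polynomially in time via the energy identity \eqref{EQenrgyid}, Lemma~\ref{LMksest} and Lemma~\ref{LMlengthest}, followed by an upgrade from $L^2$ to $L^\infty$ using the length bound. Your only deviations are cosmetic --- the Agmon interpolation inequality in place of the paper's estimate $\|\k-\overline{\k}\|_{L^\infty}\le\|\k_s\|_{L^1}$ together with the turning-number bound on $\overline{\k}$, the elementary $1$-Lipschitz/mean-value argument in place of the Poincar\'e inequality from \cite{WW24}, and discarding the good term $-\int\k_s^2|\gamma|^2\,ds$ rather than using it to absorb the cross term --- and these in fact yield an exponent no worse than the paper's $t^5$, so the stated bound holds a fortiori with the same dependence of $C$ on the initial data.
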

\begin{proof}
Estimating the evolution of $\gamma$ in $L^2(ds)$, Lemma~\ref{LMgamevo}, we find
\begin{align}
\frac{d}{dt}\int |\gamma|^2\,ds
&\le \frac12\int |\gamma|^2\,ds
    + 2\int \left(\k_{ss} + \frac12\k^3\right)^2\,ds
 + \int \k^2\,ds
 + \frac12\vn{{\k}}^4_{L^\infty}\int |\gamma|^2\,ds
\,.
\label{EQgamevo2}
\end{align}
We now require a curvature bound.
First, estimate (here {$\overline{\k}$} is the average of $\k$: $\overline{\k} = \frac1{\myL[\gamma(\cdot,t)]}\int \k\,ds$)
\[
||\k||_{L^\infty}
\le ||\k-\overline{\k}||_{L^\infty} + \overline{\k}
\le ||\k_s||_{{L^1}} + \frac{2\omega\pi}{\myL[\gamma(\cdot,t)]}
\,.
\]
As $\gamma(\cdot,t)\in\SX$, $\myL[\gamma(\cdot,t)]\ge2$.
Using this and Lemmata \ref{LMlengthest}, \ref{LMksest} yields
\[
||\k||_{L^\infty}
\le C\left(\myL[\gamma(\cdot,0)] 
+ {\frac{1+2t}2\E[\gamma(\cdot,0)]}\right)^\frac12
\left(\|\k_s(\cdot,0)\|_{L^2}^2 + C\,t\,\E^7[\gamma(\cdot,0)]\right)^\frac12
+ \omega\pi
\,,
\]
where $C$ is an absolute constant.

Applying the curvature estimate in \eqref{EQgamevo2} yields
\[
\frac{d}{dt}\int |\gamma|^2\,ds
\le (C_1+C_2t^4)\int |\gamma|^2\,ds
 + 2\E[\gamma(\cdot,0)]
+ 2\int \left(\k_{ss} + \frac12\k^3\right)^2\,ds,
\]
where $C_1$ and $C_2$ are  constants depending on $\myL[\gamma(\cdot,0)]$, $\E[\gamma(\cdot,0)]$, and $||\k_s||_{{L^2}}(0)$.
Integration and the energy identity yield
\[
\int |\gamma|^2\,ds
\le \int_0^t(C_1+C_2(\hat t)^4)\int |\gamma|^2\,ds\,d\hat t
 + \|\gamma(\cdot,0)\|_{L^2}^2
 + {2(2t+1)\E[\gamma(\cdot,0)]},
\]
from which the estimate
\begin{equation}
\int |\gamma|^2\,ds
\le (\|\gamma(\cdot,0)\|_{L^2}^2+2\E[\gamma(\cdot,0)])(2t+1)\exp\left(
    C_1t + \frac15C_2t^5
    \right)
\label{EQposvec}
\end{equation}
follows (due to the Gr\"onwall inequality).

To obtain the required $L^\infty$ control on $|\gamma|$ we apply a version of the Poincar\'e inequality (see \cite[Corollary 2.9]{WW24} for details) to
the components $\gamma^1$, $\gamma^2$ of $\gamma$.
We calculate
\begin{align*}
\vn{\gamma}_{L^\infty}
 &= \vn{\gamma^1e_1 + \gamma^2e_2}_{L^\infty}
\le
  \vn{\gamma^1}_{L^\infty} + \vn{\gamma^2}_{L^\infty}
\\&\le
\vn{\gamma^1 - \overline{\gamma^1} + \overline{\gamma^1}}_{L^\infty}
+ \vn{\gamma^2 - \overline{\gamma^2} + \overline{\gamma^2}}_{L^\infty}
\\&\le
\vn{\gamma^1 - \overline{\gamma^1}}_{L^\infty}
 + \vn{\overline{\gamma^1}}_{L^\infty}
+ \vn{\gamma^2 - \overline{\gamma^2}}_{L^\infty}
 + \vn{\overline{\gamma^2}}_{L^\infty}
\\&\le
	\bigg(
		\frac{2\myL[\gamma(\cdot,t)]}{\pi}\int |\tangent^1|^2 \,ds
	\bigg)^{\frac12}
	+ \bigg(
		\frac{2\myL[\gamma(\cdot,t)]}{\pi}\int |\tangent^2|^2 \,ds
	\bigg)^{\frac12}
+ \frac1{\myL[\gamma(\cdot,t)]}\int \left( |\gamma^1| + |\gamma^2|\right)\,ds
\\&\le
	2\myL[\gamma(\cdot,t)]
	\bigg(
		\frac{2}{\pi}
	\bigg)^{\frac12}
+ \frac{\sqrt 2}{\myL[\gamma(\cdot,t)]} \int  |\gamma|\,ds\,,\qquad\qquad\text{ since $|a + b| \le \sqrt2\sqrt{a^2 + b^2}$,}
\\&\le
	2\myL[\gamma(\cdot,t)]
	\bigg(
		\frac{2}{\pi}
	\bigg)^{\frac12}
+ \vn{\gamma}_{{L^2}}
\,.
\end{align*}
The claimed estimate follows now from this combined with  \eqref{EQposvec} and the length estimate Lemma~\ref{LMlengthest}.
\end{proof}

The higher derivative estimates are as follows.

\begin{lemma}
\label{LMkshighest}
Let $\gamma:[-1,1]\times[0,T)\to\R^2$ be a free boundary free elastic flow.
For each $\ell\in\N_0$ there exists $c_\ell=c_\ell\!\bigl(T,\myL[\gamma(\cdot,0)],\E[\gamma(\cdot,0)]\bigr)$ such that
\[
\int \k_{s^\ell}^2\,ds \;\le\; c_\ell\qquad\text{on }[0,T).
\]
\end{lemma}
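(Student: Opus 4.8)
The plan is to argue by induction on $\ell$, combining the evolution equation for $\int \k_{s^\ell}^2\,ds$ from Lemma~\ref{LMhigherevos} with the Dziuk--Kuwert--Sch\"atzle interpolation estimates already used in Lemma~\ref{LMksest}. The base case $\ell=0$ is immediate from \eqref{EQenergy}: the energy is non-increasing, so $\int \k^2\,ds = 2\E[\gamma(\cdot,t)]\le 2\E[\gamma(\cdot,0)]$. For $\ell\ge1$ I would set $k:=\ell+2$ and regard the negative leading term $-2\int \k_{s^{(\ell+2)}}^2\,ds$ in Lemma~\ref{LMhigherevos} as the good term against which every remaining contribution is to be absorbed.

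The heart of the argument is the estimation of the cubic and quintic terms appearing in Lemma~\ref{LMhigherevos}. Each is an integral of a product of curvature seminorms with a fixed number of factors $\nu$ and a fixed total number of derivatives $n$. Since $q+r+u=\ell$, the terms in the first sum have $\nu=4$ (the factor $\k_{s^\ell}$ together with three others) and total order $n=2\ell+2$; the terms in the second sum have $\nu=6$ and $n=2\ell$. Applying the interpolation inequality of \cite[Proposition~2.5]{DKS02} (in the segment form \cite[Lemma~4.1, Appendix~C]{dall2014}) factor by factor, together with H\"older's inequality with exponents summing to $1$, yields
\[
\Big|\int \textstyle\prod_i \k_{s^{n_i}}\,ds\Big|
\le C\,\|\k\|_{L^2}^{\nu-\gamma}\,\|\k_{s^{(\ell+2)}}\|_{L^2}^{\gamma}
+ C\,\|\k\|_{L^2}^{\nu},
\qquad
\gamma=\frac{n+\tfrac\nu2-1}{\ell+2},
\]
where $C$ depends on $\ell$ and on positive and negative powers of $\myL[\gamma(\cdot,t)]$. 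The decisive observation is that $\gamma<2$ in every case: for the cubic terms $\gamma=\frac{2\ell+3}{\ell+2}=2-\frac1{\ell+2}$, and for the quintic terms $\gamma=\frac{2\ell+2}{\ell+2}=\frac{2(\ell+1)}{\ell+2}$, both strictly below $2$. This subcriticality is precisely what renders the terms absorbable.

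Because $\gamma<2$, Young's inequality converts the high-order factor $\|\k_{s^{(\ell+2)}}\|_{L^2}^{\gamma}$ into $\delta\,\|\k_{s^{(\ell+2)}}\|_{L^2}^2$ plus a constant depending on $\delta$ and on a power of $\|\k\|_{L^2}^2=2\E[\gamma(\cdot,t)]\le 2\E[\gamma(\cdot,0)]$. Invoking the length bound of Lemma~\ref{LMlengthest} (which in particular keeps $\myL[\gamma(\cdot,t)]$ between $2$ and $\myL[\gamma(\cdot,0)]+\tfrac{1+2T}2\E[\gamma(\cdot,0)]$, so that all powers of $\myL$ are controlled on $[0,T)$) and choosing $\delta$ small, the good term dominates and I obtain
\[
\frac{d}{dt}\int \k_{s^\ell}^2\,ds
\le -\int \k_{s^{(\ell+2)}}^2\,ds + C
\le C,
\]
with $C=C\bigl(\ell,T,\myL[\gamma(\cdot,0)],\E[\gamma(\cdot,0)]\bigr)$. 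Integrating over $[0,t]$ and using that $\gamma_0$ is smooth (so that $\int \k_{s^\ell}^2\,ds|_{t=0}$ is finite) gives the asserted bound on $[0,T)$. The only genuine obstacle I anticipate is the bookkeeping behind the displayed interpolation inequality and the verification that \emph{every} term produced by Lemma~\ref{LMhigherevos} is subcritical; once $\gamma<2$ is confirmed for both the cubic and quintic families, the absorption and the final integration are routine.
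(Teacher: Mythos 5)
Your proposal is correct and follows essentially the same route as the paper: the evolution identity of Lemma~\ref{LMhigherevos}, Dziuk--Kuwert--Sch\"atzle interpolation at top order $\ell+2$ with exactly the paper's subcritical exponents $\tfrac{2\ell+3}{\ell+2}$ for the cubic terms and $\tfrac{2\ell+2}{\ell+2}$ for the quintic ones, Young absorption into the good term $-2\int \k_{s^{(\ell+2)}}^2\,ds$, and control of all length factors via $2\le\myL[\gamma(\cdot,t)]\le\myL[\gamma(\cdot,0)]+\tfrac{1+2T}{2}\E[\gamma(\cdot,0)]$. The only (immaterial) deviations are that your announced induction is vestigial --- the interpolation runs directly between $\|\k\|_{L^2}$ and $\|\k_{s^{(\ell+2)}}\|_{L^2}$, and the paper uses no induction either --- and that the paper adds a Poincar\'e step to reach the damped inequality $\frac{d}{dt}E_\ell+c_*E_\ell\le C_2$ and concludes by Gr\"onwall, whereas you drop the good term and integrate in time, which suffices equally for the stated bound on $[0,T)$ (the damped form is only exploited later, in Theorem~\ref{TMsubconv}).
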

\begin{proof}
Set $E_\ell(t):=\tfrac12\int \k_{s^\ell}^2\,ds$. 
Recalling Lemma~\ref{LMhigherevos} we note 
\begin{align}
\label{EQ:El-evo}
\frac{d}{dt}E_\ell(t)\;+\;\int \k_{s^{\ell+2}}^2\,ds
\;&\le\; \sum_{\substack{q+r+u=\ell}}
\!\int \!\!\k_{s^\ell}\Big(c^1_{qru}\,\k_{s^{q+2}}\k_{s^r}\k_{s^u}
+c^2_{qru}\,\k_{s^{q+1}}\k_{s^{r+1}}\k_{s^u}\Big)\,ds
\\&\qquad+\!\!\!\sum_{\substack{q+r+u+v+w=\ell}}\!\!\! c_{qruvw}\!\int\!\k_{s^\ell}\prod_{j\in\{q,r,u,v,w\}}\!\k_{s^j}\,ds .
\notag
\end{align}
We require uniform estimates for the reaction terms on compact time intervals, and will do so using a standard interpolation technique pioneered by Dziuk-Kuwert-Sch\"atzle \cite{DKS02}.

As the approach is well-known,  let us give only some representative estimates.
Consider the term with $(q,r,u)=(\ell,0,0)$ in the first sum:
\[
\int \big|\k_{s^\ell}\,\k_{s^{\ell+2}}\,\k^2\big|\,ds.
\]
By Cauchy-Schwarz,
\[
\int |\k_{s^{\ell+2}}|\;|\k_{s^\ell}\k^2|\,ds
\;\le\;\|\k_{s^{\ell+2}}\|_{{L^2}}\;\|\k_{s^\ell}\k^2\|_{{L^2}}.
\]
Choose exponents so that $\frac12=\frac1{p_0}+\frac2{p_1}$ with $(p_0,p_1)=(2,\infty)$, and interpolate with $m=\ell+2$:
\begin{align*}
\|\k_{s^\ell}\|_{L^{p_0}}
&\le C\,\|\k_{s^{\ell+2}}\|_{{L^2}}^{\theta_0}\,\|\k\|_{{L^2}}^{1-\theta_0},
\quad
\theta_0=\frac{\ell+\frac12-\frac1{p_0}}{\ell+2}
=\frac{\ell}{\ell+2},\\
\|\k\|_{L^{p_1}}
&\le C\,\|\k_{s^{\ell+2}}\|_{{L^2}}^{\theta_1}\,\|\k\|_{{L^2}}^{1-\theta_1},
\quad
\theta_1=\frac{\frac12-\frac1{p_1}}{\ell+2}=\frac{1}{2(\ell+2)}.
\end{align*}
Hence
\[
\|\k_{s^\ell}\k^2\|_{{L^2}}
\;\le\; C\,
\|\k_{s^{\ell+2}}\|_{{L^2}}^{\,\theta_0+2\theta_1}\;
\|\k\|_{{L^2}}^{\,1-\theta_0+2(1-\theta_1)}
= C\,
\|\k_{s^{\ell+2}}\|_{{L^2}}^{\,\frac{\ell+1}{\ell+2}}\;
\|\k\|_{{L^2}}^{\,\frac{2\ell+5}{\ell+2}},
\]
and therefore
\begin{equation}\label{EQ:danger-exponents}
\int \big|\k_{s^\ell}\,\k_{s^{\ell+2}}\,\k^2\big|\,ds
\;\le\; C\,
\|\k_{s^{\ell+2}}\|_{{L^2}}^{\,1+\frac{\ell+1}{\ell+2}}\;
\|\k\|_{{L^2}}^{\,\frac{2\ell+5}{\ell+2}}
\;=\;
C\,\|\k_{s^{\ell+2}}\|_{{L^2}}^{\,\frac{2\ell+3}{\ell+2}}\,
\|\k\|_{{L^2}}^{\,\frac{2\ell+5}{\ell+2}}.
\end{equation}
Note the exponent $\frac{2\ell+3}{\ell+2}=2-\frac{1}{\ell+2}<2$, so Young’s inequality gives, for any $\varepsilon>0$,
\[
\int \big|\k_{s^\ell}\,\k_{s^{\ell+2}}\,\k^2\big|\,ds
\;\le\; \varepsilon\,\|\k_{s^{\ell+2}}\|_{{L^2}}^2
\;+\; C(\varepsilon)\,\bigl(1+\|\k\|_{{L^2}}^{\,2\ell+5}\bigr).
\]

\smallskip
\emph{All other cubic terms.}
A generic cubic integrand from the first line of \eqref{EQ:El-evo} has the form
\(
\k_{s^\ell}\,\k_{s^{q+2}}\,\k_{s^r}\,\k_{s^u}
\)
or
\(
\k_{s^\ell}\,\k_{s^{q+1}}\,\k_{s^{r+1}}\,\k_{s^u}
\)
with $q+r+u=\ell$. Writing it as 
\(
\|\k_{s^{\ell+2}}\|_{{L^2}}\|\cdot\|_{{L^2}}
\)
after one integration by parts if needed, and estimating the $L^2$-factor by H\"older and the same interpolation inequality with $m=\ell+2$, one obtains the bound
\[
\int |\text{cubic term}|\,ds
\;\le\;
\varepsilon\,\|\k_{s^{\ell+2}}\|_{{L^2}}^2
\;+\;C(\varepsilon)\,\bigl(1+\|\k\|_{{L^2}}^{\,2\ell+5}\bigr),
\]
with the \emph{largest} power of $\|\k_{s^{\ell+2}}\|_{{L^2}}$ achieved by the term treated in \eqref{EQ:danger-exponents}.

\smallskip
\emph{Quintic terms.}
For
\(
\int \k_{s^\ell}\k_{s^q}\k_{s^r}\k_{s^u}\k_{s^v}\k_{s^w}\,ds
\)
with $q+r+u+v+w=\ell$, we again use H\"older/Gagliardo-Nirenberg with $m=\ell+2$; having more factors of $\k$ only \emph{reduces} the exponent of $\|\k_{s^{\ell+2}}\|_{{L^2}}$ below $\frac{2\ell+3}{\ell+2}$, so the same Young absorption applies and we obtain the identical right-hand side as above.

\smallskip
Combining all contributions in \eqref{EQ:El-evo} yields, for suitable $\varepsilon>0$,
\[
\frac{d}{dt}E_\ell(t) + (1-\varepsilon)\int \k_{s^{\ell+2}}^2\,ds
\;\le\; C_1\Bigl(1+\|\k\|_{{L^2}}^{\,2\ell+5}\Bigr),
\]
where $C_1$ depends on $T$, $\myL[\gamma(\cdot,0)]$ and $\E[\gamma(\cdot,0)]$ (the latter bounds $\|\k\|_{{L^2}}$).
By the (free-boundary) Poincar\'e inequality and Lemma~\ref{LMlengthest} (length control),
\[
\int \k_{s^{\ell+2}}^2\,ds \;\ge\; c(t)\int \k_{s^\ell}^2\,ds,
\qquad c(t)\ge c_*>0 \text{ on }[0,T).
\]
Hence
\begin{equation}\label{EQksesteqinpf}
\frac{d}{dt}E_\ell(t) + c_* E_\ell(t)\;\le\; C_2,
\end{equation}
and Gr\"onwall’s inequality gives the claimed uniform bound for $\int \k_{s^\ell}^2\,ds$ on $[0,T)$.
\end{proof}

\subsection{Global existence}
The main goal of this section is to establish global existence for generic initial data, that is, the following theorem.

\begin{theorem}
Let $\gamma:[-1,1]\times[0,T)\to\R^2$ be a free boundary free elastic flow with $T$ taken to be maximal.
Then $T=\infty$.
\label{TMlte}
\end{theorem}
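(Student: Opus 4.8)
The plan is to argue by contradiction. Suppose the maximal existence time $T$ is finite. I would then show that \emph{every} quantity appearing in the continuation criterion \eqref{EQcontcrit} stays bounded on $[0,T)$, which directly contradicts Theorem \ref{TMste2} and forces $T=\infty$. The genuinely difficult analytic content has already been packaged into the a-priori estimates of Section \ref{Sests}, so the work here is almost entirely one of assembly; the only points requiring care are the lower bound on length feeding the Sobolev step and the fact that the constants produced below, though $T$-dependent, are finite for finite $T$.

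First I would dispatch the position term. Lemma \ref{LMgamest} gives $\|\gamma\|_{L^\infty}^2\le C(1+e^{Ct^5})$ with $C$ depending only on the initial data; since $T<\infty$ the right-hand side is uniformly bounded on $[0,T)$, so $\sup_{[0,T)}\|\gamma\|_{L^\infty}<\infty$. Next I would control the curvature and all its arclength derivatives in $L^\infty$. The starting point is Lemma \ref{LMkshighest}, which supplies, for every $\ell\in\N_0$, a uniform $L^2$-bound $\int \k_{s^\ell}^2\,ds\le c_\ell$ on $[0,T)$, the constant $c_\ell=c_\ell(T,\myL[\gamma(\cdot,0)],\E[\gamma(\cdot,0)])$ being finite precisely because $T<\infty$.

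It then remains to upgrade these $L^2$-bounds to $L^\infty$-bounds. I would do this with the elementary one-dimensional interpolation inequality on the arclength interval $[0,\myL[\gamma(\cdot,t)]]$, namely
\[
\|\partial_s^p\k\|_{L^\infty}^2
\;\le\; \frac{1}{\myL[\gamma(\cdot,t)]}\,\|\partial_s^p\k\|_{L^2}^2
\;+\; 2\,\|\partial_s^p\k\|_{L^2}\,\|\partial_s^{p+1}\k\|_{L^2},
\]
obtained from the fundamental theorem of calculus started at a point where $|\partial_s^p\k|$ attains its $L^2$-average. The lower bound $\myL[\gamma(\cdot,t)]\ge 2$ (curves in $\SX$ join two lines at distance $2$) holds throughout the flow, so applying this estimate with the two consecutive bounds $c_p$ and $c_{p+1}$ from Lemma \ref{LMkshighest} yields $\sup_{[0,T)}\|\partial_s^p\k\|_{L^\infty}<\infty$ for every $p\in\N_0$.

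Combining the two conclusions, for finite $T$ the quantity $\|\gamma\|_{L^\infty}+\|\partial_s^p\k\|_{L^\infty}$ remains bounded on $[0,T)$ for \emph{every} $p\in\N_0$, contradicting \eqref{EQcontcrit}. Hence $T=\infty$. I do not expect a real obstacle at this stage: the substantive estimates (the uniform $L^2$-bounds of Lemma \ref{LMkshighest}, which in turn rest on the length estimate Lemma \ref{LMlengthest} and the Dziuk--Kuwert--Schätzle interpolation machinery) have already been carried out. The sole subtleties to verify explicitly are that the arclength interval has length bounded below by $2$, so the Sobolev constant is harmless, and that every $c_\ell$ is finite for finite $T$.
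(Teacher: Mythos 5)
Your proposal is correct and follows essentially the same route as the paper: the paper's proof is exactly this contradiction argument, observing that finiteness of $T$ makes all the Section \ref{Sests} estimates (Lemmata \ref{LMgamest} and \ref{LMkshighest}) uniform on $[0,T)$, contradicting \eqref{EQcontcrit}. Your only addition is to spell out the standard upgrade from the $L^2$-bounds of Lemma \ref{LMkshighest} to $L^\infty$-bounds on $\partial_s^p\k$ via one-dimensional interpolation with the length lower bound $\myL\ge2$, a step the paper leaves implicit in calling the argument standard; this detail is carried out correctly.
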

\begin{proof}
With the estimates of the previous subsection in hand the argument is standard.
Suppose on the contrary that the maximal existence time is finite and equal to $T$.
Then, on $[0,T)$, the finiteness of $T$ implies all the estimates in Section \ref{Sests} are uniform.
This is in contradiction with \eqref{EQcontcrit} from Theorem \ref{TMste2}.
\end{proof}

\section{{The crucial} interpolation inequality}
\label{Sthree}

{In this section we prove Theorem~\ref{TMcritineqintro} and discuss possible refinements. We  further propose  possibly critical initial data for which  Section~\ref{sec:numerics} indicates that Theorem~\ref{TMmain} is presumably optimal. }

{
Let $L>0$. We recall that Theorem~\ref{TMcritineqintro} claims that for any sufficiently smooth function $u:[0,L]\to \mathbb{R}$ satisfying $u'(0)=u'(L)=0$ and
$
\int^L_0 u(s)\, ds=0
$
one has {that \eqref{eq:1.1} holds, i.e.}
\begin{equation*}
\int^L_0 u(s)^4\, ds \le {C_0} \cdot L \left( \int^L_0 u(s)^2\, ds \right)
\left( \int^L_0 u'(s)^2\, ds \right)
\end{equation*}	
with $C_0=0.162278$.}

\begin{remark}\label{rem:1}
	The constant $C_0=0.162278$ is calculated numerically and is not sharp.	
	However, the optimal constant is certainly larger than $0.162277>
	\frac{1}{2\pi}=0.159154\ldots$. That means that our method of proof will certainly fail when {investigating} whether the threshold in \eqref{EQmtic} in Theorem~\ref{TMmain}  could possibly be increased to $2\pi$.
	
	The optimal function is a Jacobian
	$\operatorname{cn}$-function, evaluated at an optimal parameter, which is close to $k=0.8803$. The optimal $C_0$ will be close to $0.1622778337$, see \eqref{eq:2.3.9} {below}.
\end{remark}

\begin{remark}\label{rem:2}
Remarks~\ref{rem:3} and \ref{rem:4} below show that there are initial data $\gamma_0$ with  $\siE[\gamma_0] < 2 \pi$, where {$\siE[\gamma (\cdot,t)]$, for the free boundary \textcolor{brown}{free} elastic flow with $\gamma(\cdot,0)=\gamma_0$,}
is --at least initially-- strictly increasing. The long time behaviour of these initial data is numerically studied in Section~\ref{sec:numerics}.
\end{remark}

The proof of Theorem~\ref{TMcritineqintro} is performed by constrained maximisation. This results in studying differential equations of curvature type which can be explicitly solved by Jacobian elliptic functions. 

\subsection{Basics on Jacobian elliptic functions}
\label{Sbasics}

For the reader's convenience and in order to fix notation, we collect
some basic properties and definitions of Jacobian elliptic functions, one may see e.g. \cite[Sect. 4]{Mandel2}. One should have in mind that the notations are not uniform in the literature.

In what follows, let $k\in (0,1)$ be arbitrary but fixed.

\begin{align}
\mathbb{R} \ni \varphi \mapsto F(\varphi,k)&=\int^\varphi_0 \frac{1}{\sqrt{1-k^2\sin^2(\psi)}}\, d \psi,
\quad K(k)=F\left(\frac{\pi}{2},k\right),
\label{eq:2.1.01}\\
\mathbb{R} \ni \varphi \mapsto E(\varphi,k)&=\int^\varphi_0 \sqrt{1-k^2\sin^2(\psi)}\, d \psi,
\quad E(k)=E\left(\frac{\pi}{2},k\right) ,
\label{eq:2.1.02}\\
\mathbb{R}\ni s\mapsto \AM (s,k)& \quad \text{is the inverse function of}\quad 
\varphi \mapsto F(\varphi,k),\label{eq:2.1.03}\\
\mathbb{R}\ni s\mapsto \sn (s,k)&=\sin(\AM (s,k)),\qquad
\mathbb{R}\ni s\mapsto \cn (s,k)=\cos(\AM (s,k)),
\label{eq:2.1.04}\\
\mathbb{R}\ni s\mapsto \dn (s,k)&=\sqrt{1-k^2\sn^2(s,k)},
\label{eq:2.1.05}\\
\cn''(s,k) &:= \frac{\partial^2}{\partial s^2}\cn(s,k)=(2k^2 - 1)\cn(s,k) - 2k^2 \cn^3(s,k) ,\\ 
\sn''(s,k) &:=\frac{\partial^2}{\partial s^2}\sn(s,k) = - (1+k^2 )\sn(s,k)+2k^2 \sn^3(s,k) ,
\label{eq:2.1.06}\\
\frac{d}{dk}K(k)&= \frac{1}{k(1-k^2)}\left( E(k)-(1-k^2)K(k)\right)>0,
\label{eq:2.1.07}\\
\frac{d}{dk}E(k)&= \frac{1}{k}\left( E(k)-K(k)\right)<0.
\label{eq:2.1.08}
\end{align}
We calculate the following integrals for later use:
\begin{align}
\int^{K(k)}_0 \sn^2(s,k)\, ds &= \frac{1}{k^2}\left( K(k)-E(k)\right),\label{eq:2.1.1}\\
\int^{K(k)}_0 \cn^2(s,k)\, ds &= K(k)- \frac{1}{k^2}\left( K(k)-E(k)\right),\label{eq:2.1.2}\\
\int^{K(k)}_0 \sn^4(s,k)\, ds &=\frac{1}{k^4}\left(\frac13 k^2 K(k) -\frac23 k^2 E(k) +\frac23 K(k)-\frac23 E(k) \right), 
\label{eq:2.1.3}\\
\int^{K(k)}_0 \cn^4(s,k)\, ds &=\frac{1}{k^4}\left(k^4 K(k)-\frac53 k^2 K(k) +\frac43 k^2 E(k) +\frac23 K(k) -\frac23 E(k)\right).
\label{eq:2.1.4}
\end{align}

\subsection{Maximisers under constraints}

Thanks to scaling arguments it suffices to prove Theorem~\ref{TMcritineqintro} for $L=1$, and one may even restrict oneself to $\| u \|_{L^2(0,1)}=1$. The goal is to calculate
$$
\mathscr{C}_0:= \sup_{u\in \mathscr K} \frac{\| u \|^4_{L^4(0,1)}}{\| u_s \|^2_{L^2(0,1)}}
$$
{with}
$$
\mathscr K=
\{u:[0,1]\to \mathbb{R}\,{:}\, \int^1_0 u(s)\, ds=0, u'(0)=u'(1)=0,\| u \|^2_{L^2(0,1)}=1 \}. 
$$
However, one still has the $L^4$-norm as a free parameter. For this reason we introduce a second constraint. For arbitrary $D\in (0,\infty)$ we introduce
$$
\mathscr{K}_D:=\{u\in \mathscr K\,{:}\,\quad \| u \|^4_{L^4(0,1)}=D\}.
$$
Then, by standard variational techniques and regularity results  one finds maximisers 
$$
\exists u_D\in \mathscr{K}_D: \quad \frac{ D }{\| u_{D,s} \|^2_{L^2(0,1)}}
=\max_{u\in \mathscr{K}_D} \frac{ D }{\| u_{s} \|^2_{L^2(0,1)}}=:\mathscr{C}_{0,D}.
$$
Observe that 
\begin{equation}\label{eq:max_under_constraint}
    \mathscr{C}_0= \sup_{D\in (0,\infty)}\mathscr{C}_{0,D}.
\end{equation}
For suitable Lagrange parameters $\lambda,\mu \in \mathbb{R} $,
which depend on the variable $D\in (0,\infty)$, these are solutions of the following problem
\begin{equation}\label{eq:2.2.1}
-u_{D,ss}= \frac12 \lambda u_D^3 + \mu u_D\quad \text{in}\quad [0,1],
\quad u_{D,s}'(0)=u_{D,s}'(1)=0,\quad \int^1_0 u_{D}(s)\, ds
=0.
\end{equation}
Using $u_D$ as a testing function one finds the relation
$$
0<\int^1_0 u_{D,s}^2(s)\, ds=\frac12 \lambda D + \mu.
$$
This means that we need not consider the case where $\lambda \le 0$ and $\mu\le 0$.

Instead of working with the parameters $\lambda$ and $\mu$, we normalise them
to $0,+1,-1$ by scaling $s$ and $u$ and work on intervals with variable length $\tilde L$. So, we maximise \eqref{eq:2.3.1} below with respect to $\tilde L$ or equivalently with respect to the modulus of the elliptic functions instead of maximising \eqref{eq:max_under_constraint} with respect to $D$. See \eqref{eq:2.3.2} below.

Although the claim  \eqref{eq:1.1} looks like a general interpolation inequality, analysing its potential maximisers leads to studying the following curvature equations, see e.g.\ \cite{BryantGriffiths_1986,LangerSinger_1984a,Mandel1,Mandel2}.
We have to consider  five cases:
\begin{align}
& \mykappa_{ss}+\frac12 \mykappa^3-\mykappa=0, \label{2.2.01}\tag{\text{a}}\\
& \mykappa_{ss}+\frac12 \mykappa^3=0, \label{2.2.02}\tag{\text{b}}\\
& \mykappa_{ss}+\frac12 \mykappa^3+\mykappa=0, \label{2.2.03}\tag{\text{c}}\\
& \mykappa_{ss} +\mykappa=0, \label{2.2.04}\tag{\text{d}}\\
& \mykappa_{ss}-\frac12 \mykappa^3+\mykappa=0. \label{2.2.05}\tag{\text{e}}
\end{align}

\subsection{Reducing the problem to discussing a particular real function}
\label{sect:2.3}

According to \cite{BryantGriffiths_1986,LangerSinger_1984a,Mandel1,Mandel2}
and after some elementary calculations we have to consider the following solutions:
\begin{align}
 \mykappa_k(s) =\frac{2k}{\sqrt{2k^2-1}}\cn\left( \frac{s}{\sqrt{2k^2-1}},k\right),   \quad
s\in [0,\tilde L(k)], \quad \tilde L(k)=2 \sqrt{2k^2-1} K(k),&\tag{\text{a}}\\ 
\quad k\in (1/\sqrt2,1),&\nonumber\\
  \mykappa_{1/\sqrt2}(s) =\sqrt2\cn\left( s,1/\sqrt2\right),   \quad
s\in [0,\tilde L(1/\sqrt2)], \quad \tilde L(1/\sqrt2)=2  K(1/\sqrt2),& 
 \tag{\text{b}}\\
   \mykappa_k(s) =\frac{2k}{\sqrt{1-2k^2}}\cn\left( \frac{s}{\sqrt{1-2k^2}},k\right),   \quad
s\in [0,\tilde L(k)], \quad \tilde L(k)=2 \sqrt{1-2k^2} K(k),&\tag{\text{c}}\\ \quad k\in (0,1/\sqrt2),&\nonumber
\\
 \mykappa_0(s)=\cos(s),  \quad
s\in [0,\tilde L(0)], \quad \tilde L(0)=\pi,& \tag{\text{d}}\\
 \mykappa_k(s) =\frac{2k}{\sqrt{k^2+1}}\sn\left( \frac{s}{\sqrt{k^2+1}},-k\right),   \quad
s\in [-\tilde L(k)/2,\tilde L(k)/2], & \tag{\text{e}}\\
 \tilde L(k)=2 \sqrt{k^2+1} K(-k),\quad k\in (-1,0).&\nonumber 
\end{align}
The choice of the intervals of definition is made such that:
\begin{itemize}
	\item The functions have mean value $0$.
	\item They obey homogeneous Neumann boundary conditions.
	\item Their minimal length in order to satisfy the above conditions is
	a {half-period}. When working on $m$-th multiples of {half-periods}, the relevant quantity we want to maximise  decreases by the factor $1/m^2$.
\end{itemize}
The quantity we have to consider (taking into account the antisymmetry of $\mykappa_k$) is
\begin{equation}\label{eq:2.3.1}
(-1,1)\ni k\mapsto Q(k):=
\frac{\int^{\tilde L(k)/2}_0 \mykappa_k(s)^4\, ds }{ 2 \tilde L(k) \cdot  \left( \int^{\tilde L(k)/2}_0 \mykappa_k (s)^2\, ds \right)\cdot  
\left( \int^{\tilde L(k)/2}_0 \mykappa_{k,s}(s)^2\, ds \right)}.
\end{equation}
Theorem~\ref{TMcritineqintro} is then proved by calculating
\begin{equation}\label{eq:2.3.2}
\mathscr{C}_0=\sup_{k\in (-1,1)} Q(k)
\end{equation}
or, more precisely, by calculating a reliable upper bound for it.

Now we calculate the quantities of {interest} in the five cases as 
mentioned above.

\bigskip
\noindent 
\underline{Case (a): $k\in (1/\sqrt2,1)$.} We recall that here, we have to consider
$$
\mykappa_k(s) =\frac{2k}{\sqrt{2k^2-1}}\cn\left( \frac{s}{\sqrt{2k^2-1}},k\right),   \quad
s\in [0,\tilde L(k)], \quad \tilde L(k)=2 \sqrt{2k^2-1} K(k).
$$
Making use of \eqref{eq:2.1.2} and of \eqref{eq:2.1.4} we find:
\begin{align*}
\int^{\tilde L(k)/2}_0 \mykappa_k (s)^2\, ds &=\frac{4k^2}{{2k^2-1}}\int^{\sqrt{2k^2-1} K(k) }_0 
\cn^2\left( \frac{s}{\sqrt{2k^2-1}},k\right)\, ds\\
&=\frac{4k^2}{\sqrt{2k^2-1}}\int^{ K(k) }_0 
\cn^2\left( s,k\right)\, ds
=\frac{4}{\sqrt{2k^2-1}}\left( k^2 K(k) -K(k)+E(k)\right),
\\
\int^{\tilde L(k)/2}_0 \mykappa_k (s)^4\, ds &=\frac{16k^4}{{(2k^2-1)^2}}\int^{\sqrt{2k^2-1} K(k) }_0 
\cn^4\left( \frac{s}{\sqrt{2k^2-1}},k\right)\, ds\\
&=\frac{16k^4}{{(2k^2-1)^{3/2}}}\int^{ K(k) }_0 
\cn^4\left( s,k\right)\, ds\\
&=\frac{16}{{(2k^2-1)^{3/2}}}
\left(k^4 K(k)-\frac53 k^2 K(k) +\frac43 k^2 E(k) +\frac23 K(k) -\frac23 E(k)\right).
\end{align*}
In view of the differential equation
$\mykappa_{ss}+\frac12 \mykappa^3-\mykappa=0$, the boundary conditions, and the symmetry  we finally see that 
$$
 \int^{\tilde L(k)/2}_0 \mykappa_{k,s}(s)^2\, ds =
\frac12 \int^{\tilde L(k)/2}_0 \mykappa_k(s)^4\, ds -\int^{\tilde L(k)/2}_0 \mykappa_k(s)^2\, ds, 
$$
hence
\begin{align*}
Q(k)&=
\frac{\int^{\tilde L(k)/2}_0 \mykappa_k(s)^4\, ds }{ 2 \tilde L(k) \cdot  \left( \int^{\tilde L(k)/2}_0 \mykappa_k (s)^2\, ds \right)\cdot  
	\left( \int^{\tilde L(k)/2}_0 \mykappa_{k,s}(s)^2\, ds \right)}\\
&=\frac{1}{\tilde L(k)\cdot  \left( \int^{\tilde L(k)/2}_0 \mykappa_k (s)^2\, ds \right)\cdot \left(1- 2\frac{ \int^{\tilde L(k)/2}_0 \mykappa_k (s)^2\, ds}{\int^{\tilde L(k)/2}_0 \mykappa_k(s)^4\, ds} \right)}.
\end{align*}
Putting all together we come up with
\begin{align}
Q(k)&=\frac{1}{8K(k)\cdot \left(  k^2 K(k) -K(k)+E(k)\right)}
\cdot \frac{1}{1- (k^2-1/2)\frac{ k^2 K(k) -K(k)+E(k)}{k^4 K(k)-\frac53 k^2 K(k) +\frac43 k^2 E(k) +\frac23 K(k) -\frac23 E(k)}}.\label{eq:2.3.3}
\end{align}

\bigskip
\noindent 
\underline{Case (b): $k =1/\sqrt2$.} We recall that here, we have to consider
$$
\mykappa_{1/\sqrt2}(s) =\sqrt{2}\cn\left( s,1/\sqrt2\right),   \quad
s\in [0,\tilde L(1/\sqrt2)], \quad \tilde L(1/\sqrt{2} )=2  K(1/\sqrt2).
$$
Here  the differential equation is just 
$\mykappa_{ss}+\frac12 \mykappa^3=0$ so that 
$$
\int^{\tilde L({1/\sqrt{2}}   )/2}_0 \mykappa_{{1/\sqrt{2}}   ,s}(s)^2\, ds =
\frac12 \int^{\tilde L({1/\sqrt{2}}   )/2}_0 \mykappa_{1/\sqrt{2}}   (s)^4\, ds ,
$$
hence
\begin{align*}
Q({1/\sqrt{2}}   )&=
\frac{\int^{\tilde L({1/\sqrt{2}}   )/2}_0 \mykappa_{1/\sqrt{2}}   (s)^4\, ds }{ 2 \tilde L({1/\sqrt{2}}   ) \cdot  \left( \int^{\tilde L({1/\sqrt{2}}   )/2}_0 \mykappa_{1/\sqrt{2}}    (s)^2\, ds \right)\cdot  
	\left( \int^{\tilde L({1/\sqrt{2}}   )/2}_0 \mykappa_{{1/\sqrt{2}}   ,s}(s)^2\, ds \right)}\\
&=\frac{1}{\tilde L({1/\sqrt{2}}   )\cdot  \left( \int^{\tilde L({1/\sqrt{2}}   )/2}_0 \mykappa_{1/\sqrt{2}}    (s)^2\, ds \right)}.
\end{align*}
In this particluar case $k =1/\sqrt2$ we find
$$
 \int^{\tilde L({1/\sqrt{2}}   )/2}_0 \mykappa_{1/\sqrt{2}}    (s)^2\, ds
 =2 \int_0^{K({1/\sqrt{2}}   )} \cn^2 (s,{1/\sqrt{2}}   )\, ds=2( 2E({1/\sqrt{2}}   )-K({1/\sqrt{2}}   )),
$$
hence 
$$
Q(1/\sqrt2)=\frac{1}{4K(1/\sqrt2) (2 E(1/\sqrt2)-K(1/\sqrt2) )}.
$$
One may observe that this expression can be obtained from \eqref{eq:2.3.3}, when putting
there formally $k=1/\sqrt2$.

\bigskip
\noindent 
\underline{Case (c): $k\in (0,1/\sqrt2)$.} Formally this case is rather similar to Case~(a). However, we have a different sign in the differential equation. Here we have to consider
$$
\mykappa_k(s) =\frac{2k}{\sqrt{1-2k^2}}\cn\left( \frac{s}{\sqrt{1-2k^2}},k\right),   \quad
s\in [0,\tilde L(k)], \quad \tilde L(k)=2 \sqrt{1-2k^2} K(k).
$$
Making use of \eqref{eq:2.1.2} and of \eqref{eq:2.1.4} we find:
\begin{align*}
\int^{\tilde L(k)/2}_0 \mykappa_k (s)^2\, ds &=\frac{4k^2}{{1-2k^2}}\int^{\sqrt{1-2k^2} K(k) }_0 
\cn^2\left( \frac{s}{\sqrt{1-2k^2}},k\right)\, ds\\
&=\frac{4k^2}{\sqrt{1-2k^2}}\int^{ K(k) }_0 
\cn^2\left( s,k\right)\, ds
=\frac{4}{\sqrt{1-2k^2}}\left( k^2 K(k) -K(k)+E(k)\right),
\\
\int^{\tilde L(k)/2}_0 \mykappa_k (s)^4\, ds &=\frac{16k^4}{{(1-2k^2)^2}}\int^{\sqrt{1-2k^2} K(k) }_0 
\cn^4\left( \frac{s}{\sqrt{1-2k^2}},k\right)\, ds\\
&=\frac{16k^4}{{(1-2k^2)^{3/2}}}\int^{ K(k) }_0 
\cn^4\left( s,k\right)\, ds\\
&=\frac{16}{{(1-2k^2)^{3/2}}}
\left(k^4 K(k)-\frac53 k^2 K(k) +\frac43 k^2 E(k) +\frac23 K(k) -\frac23 E(k)\right).
\end{align*}
In view of the boundary conditions, the symmetry and the differential equation
$\mykappa_{ss}+\frac12 \mykappa^3+\mykappa=0$ we finally see that 
$$
\int^{\tilde L(k)/2}_0 \mykappa_{k,s}(s)^2\, ds =
\frac12 \int^{\tilde L(k)/2}_0 \mykappa_k(s)^4\, ds +\int^{\tilde L(k)/2}_0 \mykappa_k(s)^2\, ds, 
$$
hence
\begin{align*}
Q(k)&=
\frac{\int^{\tilde L(k)/2}_0 \mykappa_k(s)^4\, ds }{ 2 \tilde L(k) \cdot  \left( \int^{\tilde L(k)/2}_0 \mykappa_k (s)^2\, ds \right)\cdot  
	\left( \int^{\tilde L(k)/2}_0 \mykappa_{k,s}(s)^2\, ds \right)}\\
&=\frac{1}{\tilde L(k)\cdot  \left( \int^{\tilde L(k)/2}_0 \mykappa_k (s)^2\, ds \right)\cdot \left(1+ 2\frac{ \int^{\tilde L(k)/2}_0 \mykappa_k (s)^2\, ds}{\int^{\tilde L(k)/2}_0 \mykappa_k(s)^4\, ds} \right)}.
\end{align*}
Observe the difference in the sign, when compared with Case~(a).
Putting all together we come up with
\begin{align}
Q(k)&=\frac{1}{8K(k)\cdot \left(  k^2 K(k) -K(k)+E(k)\right)}
\cdot \frac{1}{1+ (1/2-k^2)\frac{ k^2 K(k) -K(k)+E(k)}{k^4 K(k)-\frac53 k^2 K(k) +\frac43 k^2 E(k) +\frac23 K(k) -\frac23 E(k)}},\label{eq:2.3.4}
\end{align}
which, somehow surprisingly,  coincides  precisely with \eqref{eq:2.3.3}.

\bigskip
\noindent 
\underline{Case (d): $k=0$.} This case is simple:
$\mykappa_0(s)=\cos(s)$,  $s\in [0,\tilde L(0)]$, $ \tilde L(0)=\pi$.
We calculate
\begin{align*}
\int^{\tilde L(0)/2}_0 \mykappa_0 (s)^2\, ds =&\int^{\pi/2}_0 \cos^2(s) \, ds=\frac{\pi}{4},
\quad 
\int^{\tilde L(0)/2}_0 \mykappa_0 (s)^4\, ds =\int^{\pi/2}_0 \cos^4(s) \, ds=\frac{3\pi}{16},\\
\int^{\tilde L(0)/2}_0 \mykappa_{0,s} (s)^2\, ds =&\int^{\pi/2}_0 \sin^2(s) \, ds=\frac{\pi}{4}.
\end{align*}
This gives
\begin{equation}\label{eq:3.3.6}
Q(0)= 
\frac{\int^{\tilde L(0)/2}_0 \mykappa_0(s)^4\, ds }{ 2 \tilde L(0) \cdot  \left( \int^{\tilde L(0)/2}_0 \mykappa_0 (s)^2\, ds \right)\cdot  
	\left( \int^{\tilde L(0)/2}_0 \mykappa_{0,s}(s)^2\, ds \right)}
=\frac{\frac{3\pi}{16}}{\frac{\pi^3}{8}}=\frac{3}{2\pi^2}.
\end{equation}

\bigskip
\noindent 
\underline{Case (e): $k\in (-1,0)$.} We recall that here, we have to consider
$$
\mykappa_k(s) =\frac{2k}{\sqrt{k^2+1}}\sn\left( \frac{s}{\sqrt{k^2+1}},-k\right),   \quad
s\in [-\tilde L(k)/2,\tilde L(k)/2], \quad \tilde L(k)=2 \sqrt{k^2+1} K(-k).
$$
Making use of \eqref{eq:2.1.1} and of \eqref{eq:2.1.3} we find:
\begin{align*}
\int^{\tilde L(k)/2}_0 \mykappa_k (s)^2\, ds &=\frac{4k^2}{{k^2+1}}\int^{\sqrt{k^2+1} K(-k) }_0 
\sn^2\left( \frac{s}{\sqrt{k^2+1}},-k\right)\, ds\\
&=\frac{4k^2}{\sqrt{k^2+1}}\int^{ K(-k) }_0 
\sn^2\left( s,-k\right)\, ds
=\frac{4}{\sqrt{k^2+1}}\left(K(-k) -E(-k) \right),
\\
\int^{\tilde L(k)/2}_0 \mykappa_k (s)^4\, ds &=\frac{16k^4}{{(k^2+1)^2}}\int^{\sqrt{k^2+1} K(-k) }_0 
\sn^4\left( \frac{s}{\sqrt{k^2+1}},-k\right)\, ds\\
&=\frac{16k^4}{{(k^2+1)^{3/2}}}\int^{ K(k) }_0 
\sn^4\left( s,-k\right)\, ds\\
&=\frac{16}{{(k^2+1)^{3/2}}}
\left(\frac13 k^2 K(-k) -\frac23 k^2 E(-k) +\frac23 K(-k)-\frac23 E(-k)  \right).
\end{align*}
In view of the boundary conditions, the symmetry and the differential equation
$\mykappa_{ss}-\frac12 \mykappa^3+\mykappa=0$ we finally see that 
$$
\int^{\tilde L(k)/2}_0 \mykappa_{k,s}(s)^2\, ds =
-\frac12 \int^{\tilde L(k)/2}_0 \mykappa_k(s)^4\, ds +\int^{\tilde L(k)/2}_0 \mykappa_k(s)^2\, ds, 
$$
hence
\begin{align*}
Q(k)&=
\frac{\int^{\tilde L(k)/2}_0 \mykappa_k(s)^4\, ds }{ 2 \tilde L(k) \cdot  \left( \int^{\tilde L(k)/2}_0 \mykappa_k (s)^2\, ds \right)\cdot  
	\left( \int^{\tilde L(k)/2}_0 \mykappa_{k,s}(s)^2\, ds \right)}\\
&=\frac{1}{\tilde L(k)\cdot  \left( \int^{\tilde L(k)/2}_0 \mykappa_k (s)^2\, ds \right)\cdot \left(-1+ 2\frac{ \int^{\tilde L(k)/2}_0 \mykappa_k (s)^2\, ds}{\int^{\tilde L(k)/2}_0 \mykappa_k(s)^4\, ds} \right)}.
\end{align*}
Putting all together we come up with
\begin{align}
Q(k)&=\frac{1}{8K(-k)\left(K(-k) -E(-k) \right)}\cdot 
\frac{1}{-1+ \frac12 (k^2+1)  \frac{\left(K(-k) -E(-k) \right)}{\frac13 k^2 K(-k) -\frac23 k^2 E(-k) +\frac23 K(-k)-\frac23 E(-k) }}.
\label{eq:2.3.7}
\end{align}

\subsection{{Concluding the p}roof of Theorem~\ref{TMcritineqintro}}
\label{sect:proof_thm1}

For a plot, putting all the five cases together, see Figure~\ref{figure:1}.
By means of some asymptotic or computer assisted calculations one  can prove that $Q$   can   be continuously extended to the interval $[-1,1]$ with  $Q(-1)=0$, $Q(0)=\frac{3}{2\pi^2}$, $Q(1/\sqrt{2})=\frac{1}{2\pi}$,
 and $Q(1)=0$.
This yields a ``graphical'' proof of Theorem~\ref{TMcritineqintro}.
\begin{figure}[h] 
	\centering 
	\includegraphics[width=.45\textwidth]{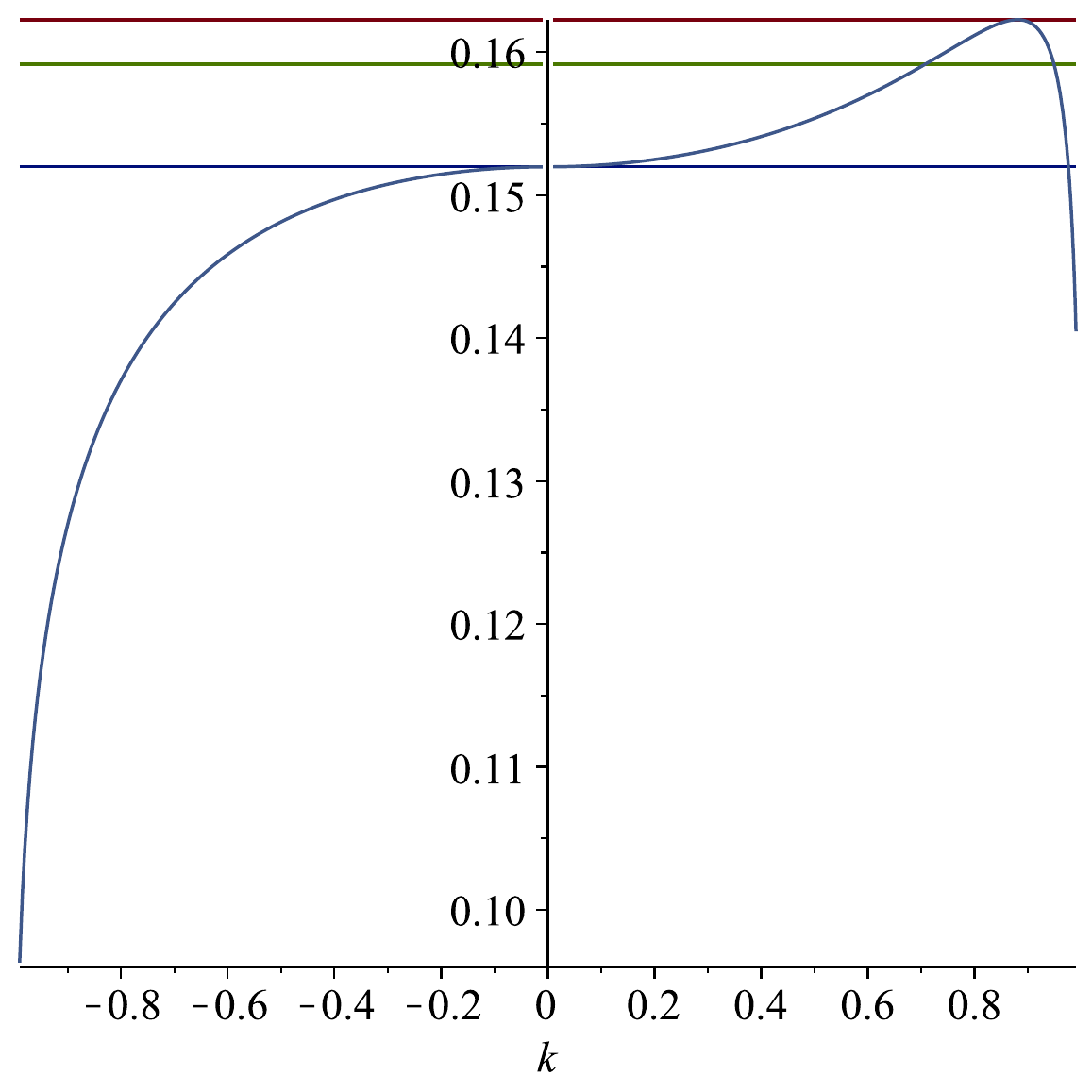}
	\caption{The function {$Q$}, together with the constants $\frac{3}{2\pi^2}$, $\frac{1}{2\pi}$, and $0.162278$.}
	\label{figure:1} 
\end{figure} 

For the relevant interval $k\in (1/\sqrt{2},1)$ we provide some analytical computations. Instead of maximising {$Q$} we may minimise
$$
(1/\sqrt{2},1)\ni k\mapsto \frac{1}{Q(k)}=F_1(k)\cdot F_2(k)
$$
with 
\begin{align*}
F_1(k)&:=8K(k)\cdot (E(k)-(1-k^2)K(k))>0,\\
F_2(k)&:=\frac{(1-k^2)K(k)+(2k^2-1)E(k)}{2(1-k^2)(2-3k^2)K(k)+4(2k^2-1)E(k)}>0.
\end{align*}
We calculate further 
\begin{align*}
F_1'(k)&= \frac{8}{k(1-k^2)} (E(k)-(1-k^2)K(k))^2+8kK(k)^2>0,\\
F_2'(k)&=\frac32 k\cdot  \frac{(1-k^2)K(k)^2+(2k^2-1)E(k)^2-2k^2E(k)K(k)}{\left( (1-k^2)(2-3k^2)K(k)+2(2k^2-1)E(k) \right)^2}<0.
\end{align*}
The critical value of $k$, where $1/Q$ is minimised (or where $Q$ is maximised), is given as solution of the equation
$$
0\stackrel{!}{=}F_1'(k)\cdot F_2(k)+F_1(k)\cdot F_2'(k)
\quad \Leftrightarrow \quad \frac{F_1'(k)}{F_1(k)}=-\frac{F_2'(k)}{F_2(k)},
$$
which is in our situation equivalent to 
\begin{align}
&\frac{K(k)-E(k)}{k(E(k)-(1-k^2)K(k))}+ \frac{E(k)}{k(1-k^2)K(k)}\nonumber\\
&\quad =3k\cdot \frac{-(1-k^2)K(k)^2+2k^2E(k)K(k)
	-(2k^2-1)E(k)^2}{\left( (1-k^2)(2-3k^2)K(k)+2(2k^2-1)E(k) \right)\cdot \left( (1-k^2)K(k)+(2k^2-1)E(k)\right)}.
\label{eq:2.3.8}
\end{align}
This is solved by
$$
k_{\max}=0.8802924038863\ldots 
$$
with
\begin{equation}\label{eq:2.3.9}
\mathscr{C}_0=Q(k_{\max})=0.162277833628\ldots.
\end{equation}
We take $C_0=0.162278$ as a reliable upper bound for $\mathscr{C}_0$.
\hfill$\square$

\subsection{{Discussing possible refinements of Theorem~\ref{TMcritineqintro} and possibly critical initial data}}
\label{sect:discussing_proof_thm1}

\begin{remark}\label{rem:3}
	In view of the application of the second claim of Lemma~\ref{LMlengthevo} in the proof of Theorem~\ref{TMmain}
	(see also \cite[Lemma 3.3]{WW24}) one could hope that
	the best constant in
	\[
	\int^L_0\! u(s)^4\, ds \le C_0\cdot L \cdot  \left( \int^L_0 \! u(s)^2\, ds \right)\cdot  
	\left( \int^L_0 \! u'(s)^2\, ds \right)
	+\frac{4L\cdot\int^L_0 \!\left( u''(s) + \frac12 u(s)^3\right)^2 ds}{ \left( \int^L_0 u(s)^2\, ds \right)}
	\]
	could be $C_0=\frac{1}{2\pi}$.
	
	In order to check whether this could possibly be true one may consider the functions from (a) in Section~\ref{sect:2.3}. There one simply has
	$$
	\frac{4L}{ \left( \int^L_0 u(s)^2\, ds \right)}\cdot\int^L_0 \left( u''(s) + \frac12 u(s)^3\right)^2\, ds=4L.
	$$
	In the affirmative, the modified quantity 
	\begin{align*}
	\tilde Q(k)&=
	\frac{\int^{\tilde L(k)/2}_0 \mykappa_k(s)^4\, ds-2 \tilde L(k)}{ 2 \tilde L(k) \cdot  \left( \int^{\tilde L(k)/2}_0 \mykappa_k (s)^2\, ds \right)\cdot  
		\left( \int^{\tilde L(k)/2}_0 \mykappa_{k,s}(s)^2\, ds \right)}\\
	&=\frac{1-\frac{2\tilde L(k)}{\int^{\tilde L(k)/2}_0 \mykappa_k(s)^4\, ds}
	}{\tilde L(k)\cdot  \left( \int^{\tilde L(k)/2}_0 \mykappa_k (s)^2\, ds \right)\cdot \left(1- 2\frac{ \int^{\tilde L(k)/2}_0 \mykappa_k (s)^2\, ds}{\int^{\tilde L(k)/2}_0 \mykappa_k(s)^4\, ds} \right)}\\
&=\frac{1-\frac{(2k^2-1)^2K(k)}{4\left( k^4 K(k)-\frac53 k^2 K(k) +\frac43 k^2 E(k) +\frac23 K(k) -\frac23 E(k)\right)}
}{8K(k)\cdot \left(  k^2 K(k) -K(k)+E(k)\right)}\nonumber\\ & \quad
\cdot \frac{1}{1- (k^2-1/2)\frac{ k^2 K(k) -K(k)+E(k)}{k^4 K(k)-\frac53 k^2 K(k) +\frac43 k^2 E(k) +\frac23 K(k) -\frac23 E(k)}}
\end{align*}
should stay below $\frac{1}{2\pi}$. Figure~\ref{figure:2} shows that this conjecture is false. One should carefully observe that this is just an observation
on a special family of functions. This does not show anything about possible improvements of Theorem~\ref{TMcritineqintro}.
\begin{figure}[h] 
	\centering 
	\includegraphics[width=.45\textwidth]{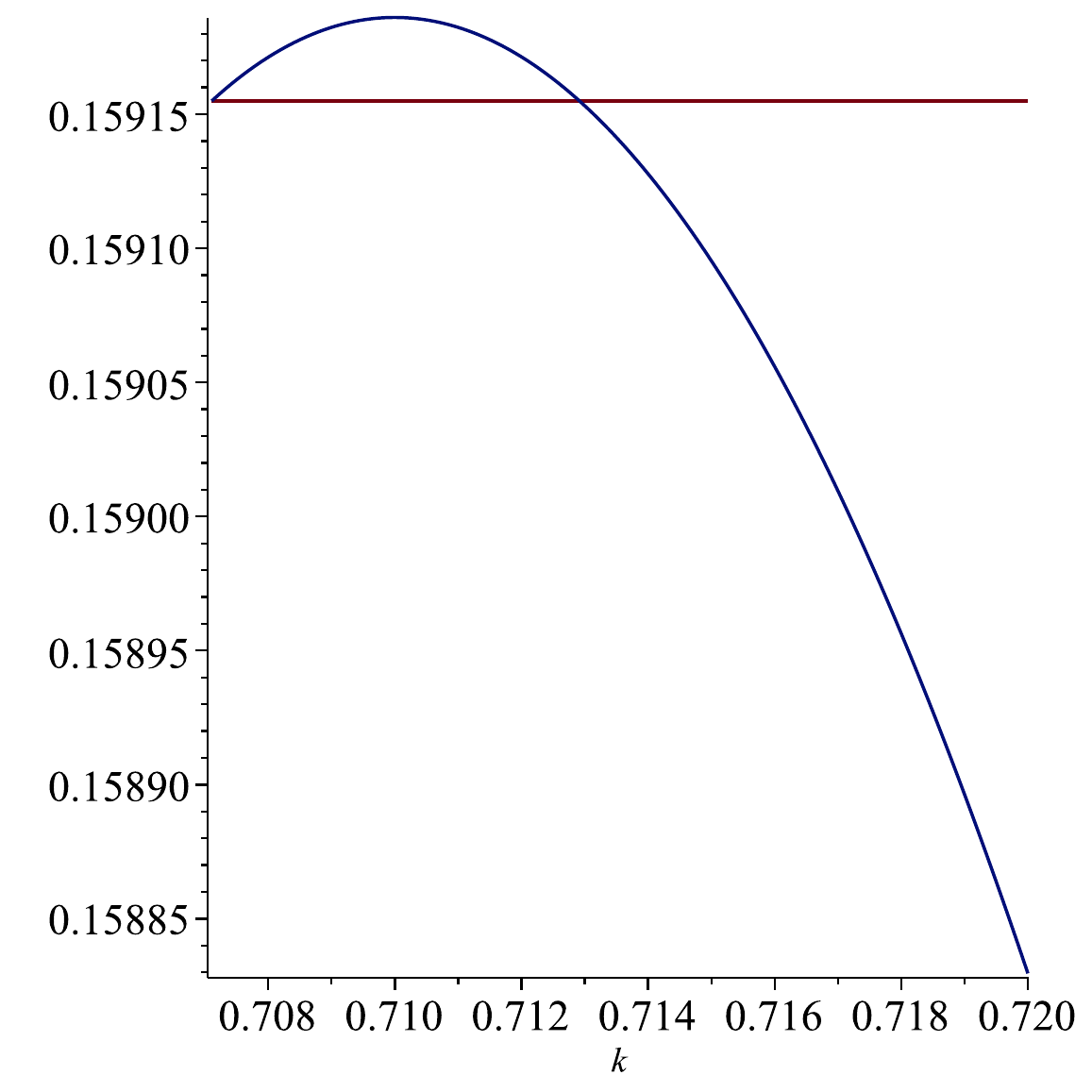}
	\caption{The function {$\tilde Q$}, together with the constant $\frac{1}{2\pi}$.}
	\label{figure:2} 
\end{figure} 
\end{remark}

\begin{remark}\label{rem:4}
	In order to possibly improve 
	 the application of the second claim of Lemma~\ref{LMlengthevo} in the proof of Theorem~\ref{TMmain}
	(see also \cite[Lemma 3.3]{WW24}) under an 
	optimal assumption, one may think of assuming
	\begin{equation}\label{eq:rem3.1}
	 L \cdot  \left( \int^L_0 u(s)^2\, ds \right)\le 4\pi.
	\end{equation}
	In view of the nonlinear character of the inequality in Remark~\ref{rem:3} one could hope that under the constraint \eqref{eq:rem3.1} the constant could be possibly improved to $C_0=\frac{1}{2\pi}$. In other words: The question is whether
	under the assumptions \eqref{eq:rem3.1} and --of course-- $\int^L_0 u(s)\, ds=0$ the following inequality is true or not:
	\begin{equation}\label{eq:rem3.2}
	LHS:=\left( \int^L_0 u(s)^2\, ds \right)\cdot
	\left( -  \int^L_0 u'(s)^2\, ds  +\frac12   \int^L_0 u(s)^4\, ds  \right)
	-2L \int^L_0 \left( u''(s) + \frac12 u(s)^3\right)^2\, ds \le 0?
	\end{equation}
Motivated by Remark~\ref{rem:3} we consider 
\begin{equation}\label{eq:rem3.5}
u(s):= a\cn (s,k) \quad \mbox{on}\quad [0,L(k)], \quad L(k)=2 K(k),
\end{equation}
with
\begin{equation}\label{eq:rem3.6}
k:=0.71\quad \text{and}\quad a:=\sqrt{\frac{\pi}{K(k)^2-\frac{1}{k^2}(K(k)^2-E(k)K(k))}}
=1.41233776\ldots
\end{equation}
so that 
$$
\int^{L(k)}_0 u(s)^2\, ds =\frac{4\pi}{L(k)}=\frac{2\pi}{K(k)}.
$$
We calculate further: 
\begin{align*}
\int^{L(k)}_0 u(s)^4\, ds &=2a^4\int_0^{K(k)}\cn^4(s,k)\, ds\\
&=2\frac{a^4}{k^4}\left( k^4 K(k)-\frac53 k^2 K(k)+\frac43 k^2 E(k) +\frac23 K(k) -\frac23 E(k)\right)\\
&=4.92029245\ldots\\
u'(s)^2&=a^2\sn^2 (s,k)\cdot \left( 1-k^2 \sn^2 (s,k) \right) \\
\int^{L(k)}_0 u'(s)^2\, ds &=2a^2\int_0^{K(k)}\sn^2(s,k)\, ds -
2a^2k^2\int_0^{K(k)}\sn^4(s,k)\, ds\\
&=\frac23 a^2 \left( \frac{1}{k^2}(K(k)-E(k))-K(k)+2E(k)\right)
=2.45917590\ldots \\
u''(s) &=a(2k^2-1) \cn (s,k)-2a\, k^2\, \cn^3(s,k)\\
\int^{L(k)}_0 \left( u''(s) + \frac12 u(s)^3\right)^2\, ds &=
2\int_0^{K(k)} \left( a(2k^2-1) \cn (s,k)+\left(\frac12 a^3-2a\, k^2\right)\, \cn^3(s,k)\right)^2\, ds\\
&=0.0000273294\ldots
\end{align*}
Putting all together we find that
$$
LHS=0.00307904\ldots >0,
$$
which means that our conjecture \eqref{eq:rem3.2} is false.

Based on these observations  we construct an initial ``possibly critical'' curve as follows.
We first define a curve parameterised by arclength having {the} particular {$u$} from \eqref{eq:rem3.5} and \eqref{eq:rem3.6} as a curvature function:
\begin{equation} \label{eq:gammac}
[0,L(k)]\ni s\mapsto \tilde{\gamma}(s,0):= \begin{pmatrix}
	\tilde{\gamma}^1(s)\\ \tilde{\gamma}^2(s) 
\end{pmatrix}
:=\begin{pmatrix}
	\int^s_0 \cos \left( \int^\sigma_0 u({\varrho})\, d{\varrho}\right)\, d\sigma\\
	\int^s_0 \sin \left( \int^\sigma_0 u({\varrho})\, d{\varrho}\right)\, d\sigma
\end{pmatrix} {.}
\end{equation}
This curve is then reparameterised and rescaled to be defined on $[-1,1]$, to fit between the straight lines $\eta_{\pm1}$ 
and to satisfy the boundary conditions:
$$
[-1,1]\ni \rho\mapsto \gamma(\rho,0):= \frac{2}{\tilde{\gamma}^1(L(k))}\begin{pmatrix}
	\tilde{\gamma}^1((\rho+1)\cdot L(k)/2)\\ \tilde{\gamma}^2((\rho+1)\cdot L(k)/2)\end{pmatrix}  -\begin{pmatrix} 1\\0\end{pmatrix}.
$$

Observing that {$\gamma_c:=\gamma(\cdot,0)$} and hence its whole evolution {$\gamma(\cdot,t)$} have turning number zero, the quantity {$K_{\operatorname{osc}}(\gamma(\cdot,t))$} used in \cite[Lemma 3.3]{WW24} coincides with {$\siE[\gamma(\cdot,t)]$}. Hence, this lemma or --equivalently-- Lemma~\ref{LMlengthevo} shows  for the scaling invariant energy that
$$
{\siE[\gamma(\cdot,0)]}=2\pi,\qquad 
\frac{d}{dt}{\siE[\gamma(\cdot,t)]}|_{t=0}>0.
$$
This shows analytically that initially the flow of {$\gamma_c$} points away from the locally stable straight lines. The long time behaviour and the question, whether the flow of {$\gamma_c$} is outside the basin of attraction of the straight lines, is studied in Section~\ref{sec:numerics}. 
For a plot of this ``possibly critical'' curve {$\gamma_c$}, see Figure~\ref{figure:3}.

\begin{figure}[h] 
	\centering 
	\includegraphics[width=.3\textwidth]{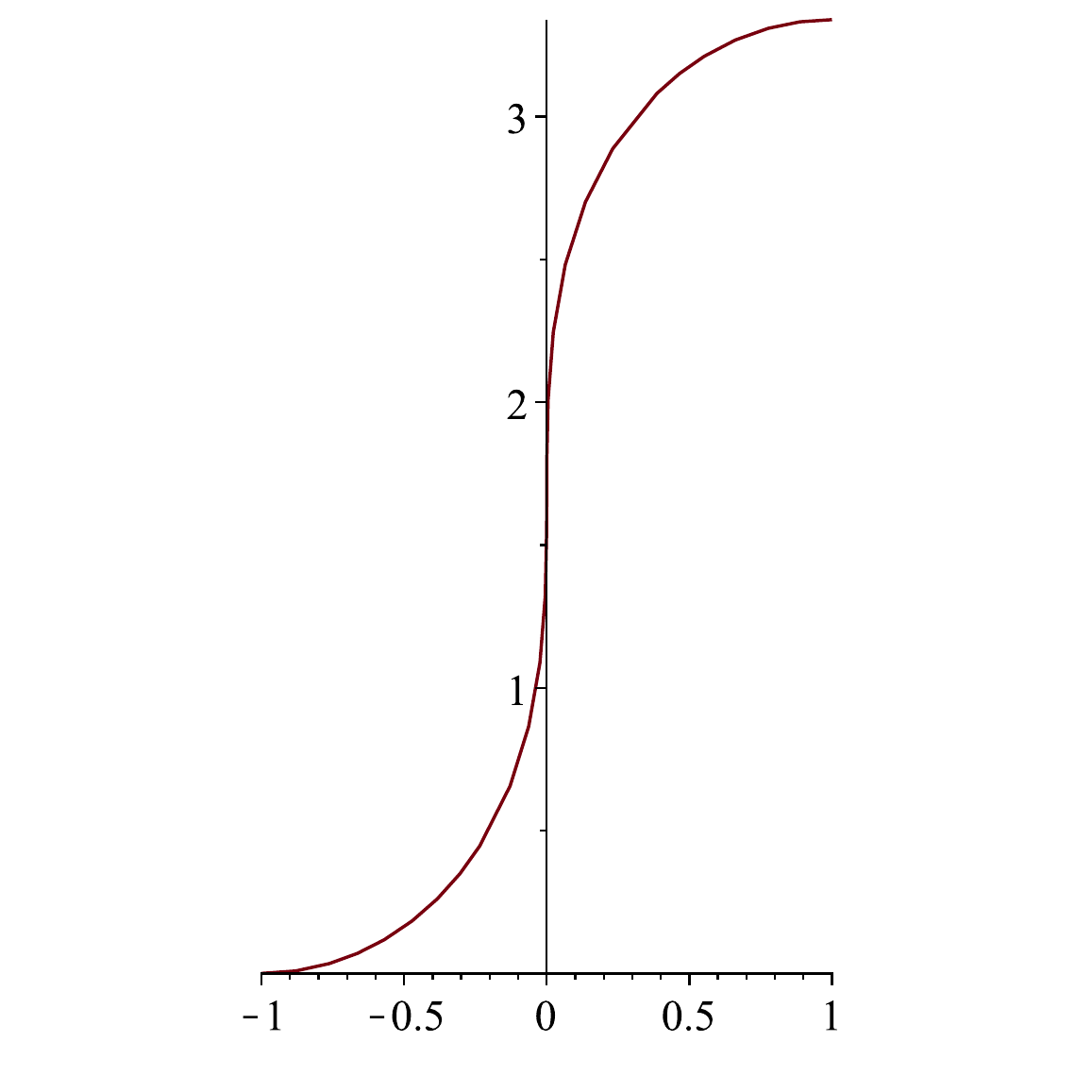}
	\caption{{The} initial curve {$\gamma_c$}, which is possibly not attracted by the straight lines.}
	\label{figure:3} 
\end{figure}
This shows that {the condition} \eqref{eq:rem3.1} does not suffice to carry out 
the proof of Theorem~\ref{TMmain} of convergence to a straight line.
This holds true also for slightly smaller initial data.
Whether the initial data described here might still be attracted by a straight line or are already outside their basin of attraction is an interesting and challenging open question. Numerical indications are given in Section~\ref{sec:numerics}.

We compare this  possibly critical profile with a suitably scaled {half-period}
of Euler's elastica, for a definition of the latter see the caption of Figure~\ref{figrect}, both parameterised proportional to their arclength. See Figure~\ref{figure:4} for the difference of Euler's curve minus our curve {$\gamma_c$}: Both coincide almost up to a relative error of $10^{-4}$, 
i.e. both plots look identical.
\begin{figure}[h] 
	\centering 
	\includegraphics[width=.3\textwidth]{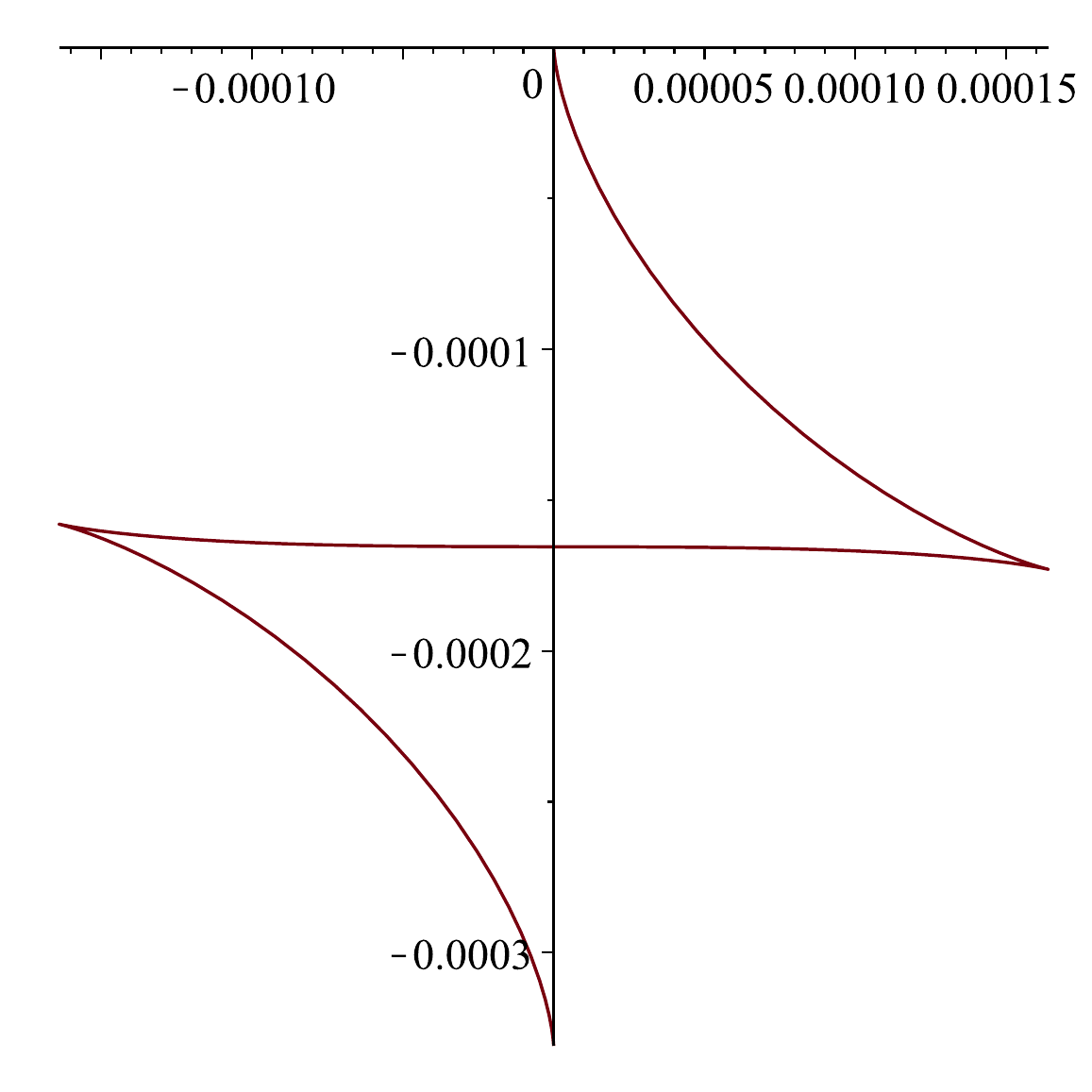}
	\caption{Rescaled Euler elastica minus {the} possibly critical initial curve {$\gamma_c$}.}
	\label{figure:4} 
\end{figure}
Figure~\ref{figure:5} displays the initial velocity of the flow in direction of the upward normal.
\begin{figure}[h] 
	\centering 
	\includegraphics[width=.3\textwidth]{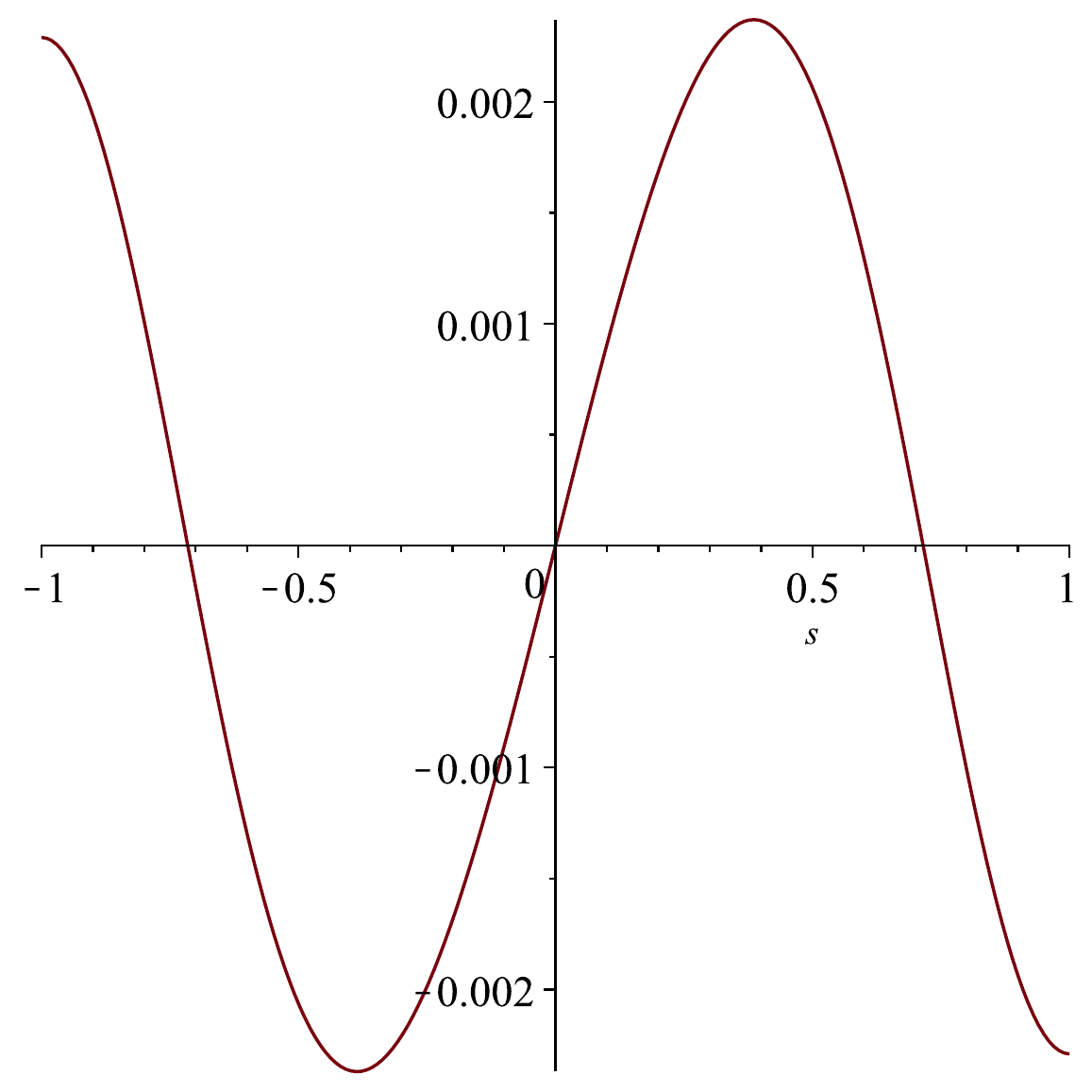}
	\caption{{$\frac{\partial}{\partial t}\gamma_c(\cdot,0)$} in direction of the upward normal.}
	\label{figure:5} 
\end{figure}
\end{remark}

\section{Convergence and Proof of Theorem \ref{TMmain}}
\label{Sconv}

Lemma~\ref{LMlengthest} shows that length can only grow to infinity as $t\to\infty$.
Clearly, if the flow is asymptotic to a stationary shape (a straight line segment or a multiple of half-periods of Euler's elastica), then the length does not grow to become unbounded.
It turns out that the presence of a uniform bound for evolving length is critical.
A bound for length implies uniform bounds for all derivatives of curvature, and so precompactness of trajectories (modulo translation).

\begin{theorem}
\label{TMsubconv}
Let $\gamma:[-1,1]\times[0,\infty)\rightarrow\R^2$ be a free boundary free elastic flow.
Suppose that there exists a constant $C$ such that $\myL[\gamma(\cdot,t)]<C$.
Then there are sequences $t_j\to\infty$ and $c_j\in\R$ such that
\[
\gamma(\cdot,t_j) - (0,c_j) \to \gamma_\infty\,,
\]
where the convergence is in the smooth topology on $\SX$ and $\gamma_\infty$ is an equilibrium, that is, a horizontal line segment or one of Euler's rectangular elastica.
\end{theorem}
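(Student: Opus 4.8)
The plan is to run the standard subconvergence scheme for gradient flows: produce a sequence of times along which the dissipation tends to zero, use the length bound to upgrade the a-priori estimates of Section~\ref{Sests} from locally-in-time to globally-in-time bounds, extract a smoothly convergent subsequence modulo vertical translation by Arzel\`a--Ascoli, and identify the limit with a stationary solution through the classification recalled after Theorem~\ref{TMmain}.

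\textbf{Time-uniform estimates.} First I would establish that, under the standing hypothesis $\myL[\gamma(\cdot,t)]<C$, the estimates of Lemma~\ref{LMkshighest} hold uniformly in $t$. The point is that the gradient-flow structure \eqref{EQenergy} gives monotonicity of $\E$, hence $\|\k\|_{L^2}^2=2\E[\gamma(\cdot,t)]\le 2\E[\gamma(\cdot,0)]$ for all $t$, while the bound $\myL<C$ together with the trivial lower bound $\myL\ge2$ prevents the interpolation and Poincar\'e constants from degenerating as $t\to\infty$. Concretely, in the differential inequality $\frac{d}{dt}E_\ell+c_*E_\ell\le C_2$ obtained in the proof of Lemma~\ref{LMkshighest} (with $E_\ell=\tfrac12\int\k_{s^\ell}^2\,ds$), both $c_*>0$ and $C_2$ may now be chosen independently of time, so Gr\"onwall's inequality yields $\sup_{t\ge0}\int\k_{s^\ell}^2\,ds\le c_\ell<\infty$ for every $\ell\in\N_0$. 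Moreover, with $\myL$ bounded the curve occupies a bounded region up to vertical translation: the left endpoint has first coordinate $-1$ and any two points of the curve differ by at most $\myL$ in each coordinate, so setting $c(t):=\gamma^2(-1,t)$ the curves $\gamma(\cdot,t)-(0,c(t))$ are uniformly bounded in $L^\infty$.

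\textbf{Selecting times and compactness.} From the energy identity \eqref{EQenrgyid}, $\E[\gamma(\cdot,t)]$ is non-increasing and bounded below by $0$, hence converges, and
\[
\int_0^\infty\!\!\int\left(\k_{ss}+\tfrac12\k^3\right)^2\,ds\,dt
=\E[\gamma(\cdot,0)]-\lim_{t\to\infty}\E[\gamma(\cdot,t)]<\infty.
\]
Therefore there is a sequence $t_j\to\infty$ with $\int(\k_{ss}+\tfrac12\k^3)^2\,ds\big|_{t_j}\to0$. Putting $c_j:=c(t_j)$, the curves $\gamma(\cdot,t_j)-(0,c_j)$ are uniformly bounded in $L^\infty$, with uniformly bounded length and uniformly bounded arclength derivatives of curvature to every order. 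Passing to the arclength-proportional parametrisation on $[-1,1]$ (which preserves the boundary conditions defining $\SX$), Arzel\`a--Ascoli applied to the curves and to all their derivatives extracts a subsequence, not relabelled, converging in $C^\infty([-1,1])$ to a limit $\gamma_\infty\in\SX$.

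\textbf{Identifying the limit, and the main difficulty.} Smooth convergence transfers $\int(\k_{ss}+\tfrac12\k^3)^2\,ds\big|_{t_j}\to0$ to the limit, forcing $\k_{ss}+\tfrac12\k^3\equiv0$ on $\gamma_\infty$; that is, $\gamma_\infty$ is a stationary solution of \eqref{FEF}. By the classification of the equilibrium set $\SS$ recalled after Theorem~\ref{TMmain}---obtained by solving the Euler--Lagrange equation subject to the boundary conditions of $\SX$---$\gamma_\infty$ is either a horizontal line segment or one of Euler's rectangular elastica, which is the assertion. The main obstacle is the compactness step, and within it the promotion of the Section~\ref{Sests} estimates to time-uniform ones: all constants in Lemma~\ref{LMkshighest} deteriorate as $t\to\infty$ precisely through the length, so the entire argument hinges on the hypothesis $\myL<C$ removing this degeneration. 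One must verify with care that $c_*$ stays bounded below uniformly in time and that the only unbounded motion of $\gamma$ is the vertical drift captured by $c_j$; once smooth subconvergence modulo this translation is in hand, the identification of $\gamma_\infty$ is routine.
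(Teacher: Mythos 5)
Your proposal is correct and follows essentially the same route as the paper's proof: upgrade the estimates of Lemma~\ref{LMkshighest} to time-uniform ones via the length bound (making the constant in \eqref{EQksesteqinpf} uniform), extract times $t_j$ with vanishing dissipation from the energy identity \eqref{EQenrgyid}, obtain smooth subconvergence modulo vertical translation by compactness, and identify $\gamma_\infty$ as an equilibrium through the classification of $\SS$. Your added details---the explicit choice $c_j=\gamma^2(-1,t_j)$ and the verification that the Poincar\'e constant $c_*$ stays uniform---are correct elaborations of steps the paper leaves implicit.
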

\begin{proof}
The hypothesis on length implies that the following uniform version of \eqref{EQksesteqinpf} holds, recalling the notation $E_\ell(t):=\tfrac12\int \k_{s^\ell}^2\,ds$: 
\begin{equation*}
	\frac{d}{dt}\int  \k_{s^{{\ell}}}^2\,ds
	+ c\int  \k_{s^{{\ell}}}^2\,ds
	\le \hat{C}.
\end{equation*}
This in turn implies that the estimate of Lemma \ref{LMkshighest} holds uniformly on $[0,\infty)$.
Let $T_c(x,y) = (x,y+c)$ be any vertical translation.
The uniform estimates on length and all derivatives of curvature are thus enough to obtain precompactness of trajectories  $\{[\gamma(\cdot,t)]\}_{t\ge0}$ in $\SX/T_c$.

The energy identity \eqref{EQenrgyid} implies the existence of $t_j\to\infty$ such that $||\nabla\E[\gamma(\cdot,t_j)]||_{L^2}\to0$.
The smooth convergence then implies that $\nabla\E[\gamma_\infty]=0$ which finishes the proof.
\end{proof}

\begin{remark}
Since all trajectories are automatically immortal, and Euler's elastica are each unstable, a reasonably clear impression of the dynamics of the flow in the presence of a length bound emerges: A dense set of trajectories with length bounded converge to a horizontal line segment.
We do not pursue this further here.
\end{remark}

\begin{remark}
If we assume that length is bounded, we may obtain convergence to a horizontal line segment with an initial condition on the energy $\E$ instead of the scale-invariant energy $\siE$.
The statement is as follows:

\medskip

\emph{Let $\gamma:[-1,1]\times[0,\infty)\rightarrow\R^2$ be a free boundary free elastic flow.
Suppose
\[
\E[\gamma(\cdot,0)] < \frac2\pi\Gamma\left(\frac34\right)^4\,,
\]
and that there exists a constant $C$ such that $\myL[\gamma(\cdot,t)]<C$.
Then $\gamma(\cdot,t)\to\gamma_\ell$, where $\gamma_\ell$ is a horizontal line segment.
The convergence is exponentially fast and in the smooth topology on $\SX$.}

\medskip

To prove this, note that Theorem \ref{TMsubconv} implies that the flow subconverges smoothly to an equilibrium.
Equilibria consist precisely of the horizontal line segment $[\gamma_\ell]$, which has $\E[\gamma_\ell] = 0$, and {$m$} half-periods of Euler's rectangular elastica $[\gamma_e^{(m)}]$, which has 
$\siE[\gamma_e^{(m)}] = 2\pi m^2$, 
$\myL[\gamma_e^{(m)}] = \frac12\left(\Gamma(\frac14)/\Gamma(\frac34)\right)^2$, hence
$\E[\gamma_e^{(m)}] = m^2\frac2\pi\Gamma\left(\frac34\right)^4$.
The initial energy condition combined with monotonicity of $\E[\gamma(\cdot,t)]$ implies that only the horizontal line segment $[\gamma_\ell]$ is a candidate limit for the trajectory.

As the convergence is smooth, we thus have $\E[\gamma(\cdot,t_j)]\to0$ and furthermore, due again to the length bound, $\siE[\gamma(\cdot,t_j)]\to0$.
In particular at some time $t_N$ the hypothesis of \cite[Theorem 1.4]{WW24} is satisfied.
This then yields the desired smooth exponential convergence to a particular limit in $[\gamma_\ell]$ and concludes the proof.
\end{remark}

Now let us prove the main result.

\begin{proof}[Proof of Theorem \ref{TMmain}]
To begin, Theorem \ref{TMlte} implies $T=\infty$.
Since $\gamma(\cdot,0)\in\SX$ and $\gamma(\cdot,0)$ satisfies \eqref{EQmtic}, the turning number of $\gamma(\cdot,0)$ is zero.
As the flow \eqref{FEF} is a family of diffeomorphisms, this remains true for all $t>0$ (alternatively, it can be shown directly that $\frac{d}{dt}\int\k\,ds = 0$).

Therefore $\int \k\,ds = 0$, and (again because of $\gamma(\cdot,t)\in\SX$) we have that $\k_s$ vanishes at $s=0$ and $s={\myL[\gamma]}$.
Thus, we may apply Theorem \ref{TMcritineqintro} for each $t\in[0,\infty)$ with $u=\k(\cdot,t)$, obtaining
\begin{equation}
\int \k^4\,ds
    \le C_0\cdot\myL[\gamma(\cdot,t)]\int \k^2\,ds\ \int \k_s^2\,ds
    = 2C_0\siE[\gamma(\cdot,t)]\int \k_s^2\,ds
    \,.
\label{EQcritineqk}
\end{equation}
Lemma \ref{LMlengthevo} and \eqref{EQcritineqk} imply
\[
	\frac{d}{dt}\myL(\gamma(\cdot,t)) 
	= -\int \k_s^2\,ds + \frac12\int \k^4\,ds
    \le -
        \left(1-C_0\siE[\gamma(\cdot,t)]\right)
        \int \k_s^2\,ds
        \,.
\]
The condition \eqref{EQmtic} is precisely $\siE[\gamma(\cdot,0)]<1/C_0$, and so  $(d/dt)\myL[\gamma(\cdot,0)] < 0$.
In turn, this implies $(d/dt)\siE[\gamma(\cdot,0)] < 0$, preserving the condition \eqref{EQmtic}, monotonicity of length and scale-invariant energy for all $t>0$.

This places us in the setting of Theorem \ref{TMsubconv}.
Applying that result yields the convergence of $\gamma(\cdot,t_j)-(0,c_j)$ to an equilibrium.
The scale invariant energy of the limit is less than $2\pi$, which rules out every family of candidates apart from vertical translations of the horizontal line segment.
As the convergence is smooth, this implies $\siE[\gamma(\cdot,t_j)]\to0$.
And so, for some $t_N$, the curve $\gamma(\cdot,t_N)$ satisfies $\siE[\gamma(\cdot,t_N)]\le\pi$, the hypothesis of \cite[Theorem 1.4]{WW24}.
That result then applies, yielding the smooth, full convergence to a single horizontal line segment (not modulo translation).
This finishes the proof.
\end{proof}

\section{Numerics}
\label{sec:numerics}

\newcommand{\myF}{\mathcal F}

The numerical simulations in this section are based on an approach developed in \cite{DN25}
for the evolution of closed curves by the free elastic flow. The evolving curves
are described 
with the help of a mapping $x : \overline{I} \times [0,T) \to \mathbb R^2$ ($I=(-1,1)$), whose velocity $x_t$ has, unlike $\gamma_t$ in \eqref{FEF},
a suitable tangential component. 

Under discretisation, this component leads to an almost uniform 
distribution of grid points along the curve. We refer the reader to
\cite{DN25} for further details.
In order to describe the corresponding scheme it is
convenient to formulate the evolution equation as a system of two second order equations for $x$ and the
second variable 
\begin{equation} \label{eq:y}
y = \frac{x_{\rho\rho}}{|x_\rho|^2} , \quad \mbox{ so that } \kappa = y \cdot \normal.
\end{equation}
It is shown in \cite[\S2.2]{DN25} that the system
\begin{equation} \label{eq:elasticxy}
x_t = - \frac{1}{|x_\rho |^2} y_{\rho \rho} + 
\frac{1}{|x_\rho |^2} \myF(x_\rho,y,y_\rho) y,
\end{equation}
where $\myF(a,b,c) \in \mathbb R^{2 \times 2}$ is given by $\myF(a,b,c)=\myF_1(a,b,c)+\myF_2(a,b,c)+\myF_3(a,b)$ with
\begin{align*}
\myF_1(a,b,c) &= \bigl( 2 a \cdot c + | a |^2 | b |^2  \bigr) I_2,  \\
\myF_2(a,b,c) &= 2 \bigl( c \otimes a -  a \otimes c \bigr) + 2 a \cdot b \bigl( a \otimes b - b \otimes a \bigr), \\
\myF_3(a,b)& = - \tfrac12 \bigl( |a|^2 |b|^2- (a \cdot b)^2 \bigr) I_2, 
\end{align*}
where $I_2 \in \mathbb R^{2\times2}$ denotes the identity matrix, has the required property that
\begin{displaymath}
x_t \cdot \normal = -\kappa_{ss}- \frac{1}{2} \kappa^3 \qquad \mbox{ in } I \times (0,T). 
\end{displaymath}

The boundary conditions in \eqref{FEF} can be encoded via $x(\cdot,0)= x_0 \in \SX$ and
\begin{subequations} \label{eq:bc}
\begin{alignat}{2} 
x_t(\pm1,t) \cdot e_1 & = 0 
\quad & t \in (0,T), \label{eq:bca} \\
x_\rho(\pm1,t) \cdot e_2 & = 0 
\quad  & t \in  (0,T), \label{eq:bcb} \\
y(\pm1,t) \cdot e_1 & = 0 
\quad  & t \in  (0,T), \label{eq:bcc} \\
y_\rho(\pm1,t)  \cdot e_2 & = 0 \quad & t \in  (0,T). \label{eq:bcd}
\end{alignat}
\end{subequations}
On recalling that $\kappa = y \cdot \normal$, we obtain
\begin{displaymath}
\kappa_s = \frac{1}{| x_\rho |} \kappa_\rho = \frac{1}{| x_\rho |} \bigl( y_\rho \cdot \normal + y \cdot \normal_\rho \bigr) = \frac{1}{| x_\rho |} \bigl( y_\rho \cdot \normal - \kappa y \cdot x_\rho \bigr),
\end{displaymath}
so that \eqref{eq:bcb}--\eqref{eq:bcd} imply that $\kappa_s=0$ on $\partial I \times (0,T)$. \\
In order to derive a weak formulation of \eqref{eq:elasticxy}, \eqref{eq:bc} we set  $\underline{V} = [H^1(I)]^2$ and define
$\underline{V}_\partial = \{ \xi \in \underline{V}\,:\,  \xi \cdot e_1 = 0 \text{ on } \partial I\}$. 
We then aim to find $x, y : \overline{I} \times [0,T) \rightarrow \mathbb R^2$ such 
that $x(\cdot,0)= x_0 \in \SX$ and $x_t, y \in \underline{V}_\partial, t \in (0,T)$ as well as
\begin{subequations} \label{eq:weak}
\begin{align} 
& \int_I x_t \cdot \chi |x_\rho|^2 \,\,d\rho - \int_I y_\rho \cdot \chi_\rho \,\,d\rho
=  \int_I \myF(x_\rho,y,y_\rho)  y \cdot \chi \,\,d\rho
\qquad \forall\ \chi \in \underline{V}_\partial, \label{eq:weaka} \\
& \int_I y \cdot \xi |x_\rho|^2 \,\,d\rho + \int_I x_\rho \cdot \xi_\rho \,\,d\rho
= 0
\qquad  \qquad \qquad \qquad \qquad  \quad \; \; \forall\ 
\xi \in \underline{V}_\partial. \label{eq:weakb}
\end{align}
\end{subequations}
We remark that in the above variational formulation \eqref{eq:bcb} and \eqref{eq:bcd} arise as natural boundary conditions. Furthermore, \eqref{eq:bca} and \eqref{eq:bcc} are a consequence of $x_t \in \underline{V}_\partial$ and
$y \in \underline{V}_\partial$, respectively. \\
In order to discretise in space, we choose
a partition $-1=q_0 < q_1< \ldots < q_{J-1} < q_J=1$ of $\overline{I} = [-1,1]$ into 
intervals $I_j=[q_{j-1},q_j]$ and let $h_j= q_j-q_{j-1}$ as well as $h=\max_{j=1,\ldots,J} h_j$. We define
the finite element space
\begin{displaymath}
V^h = \{\chi \in C^0(\overline{I})\,:\, \chi\!\mid_{I_j} \mbox{ is affine},\ 
j=1,\ldots, J\}
\end{displaymath}
and set $\underline{V}^h = [V^h]^2$ and $\underline{V}^h_\partial = \underline{V}^h \cap \underline{V}_\partial$. \\
For a time step  $\Delta t>0$ let $t_m= m \Delta t$ and denote by $(x^m_h, y^m_h) \in \underline{V}^h \times \underline{V}^h$ the
approximation of $(x(\cdot,t_m),y(\cdot,t_m))$. The algorithm 
we propose adapts  (5.4) from \cite{DN25} to open
curves. \\
Given $(x^m_h, y^m_h) \in \underline{V}^h \times \underline{V}^h$, find 
$(x^{m+1}_h, y^{m+1}_h) \in \underline{V}^h \times \underline{V}^h_\partial$, such that 
$x^{m+1}_h - x^m_h \in \underline{V}^h_\partial$ and
\begin{subequations} \label{eq:fdel}
\begin{align}
& \int_I \frac{x^{m+1}_h-x^m_h}{\Delta t} \cdot \chi |x^m_{h,\rho}|^2 \,\,d\rho
- \int_I y^{m+1}_{h,\rho} \cdot \chi_\rho \,\,d\rho \nonumber \\ & \quad
= 2 \int_I (y^{m+1}_{h,\rho} \cdot x^m_{h,\rho}) y^m_h \cdot \chi \,\,d\rho
+ \int_I | x^m_{h,\rho} |^2 (y^m_h \cdot y^{m+1}_h) y^m_h \cdot \chi \,\,d\rho
\nonumber \\ & \qquad
+ \int_I \myF_2(x^m_{h,\rho},y^m_h,y^m_{h,\rho}) y^{m+1}_h \cdot \chi \,\,d\rho + \int_I \myF_3(x^m_{h,\rho},y^m_h) y^m_h \cdot \chi \,\,d\rho
\qquad \forall\ \chi \in \underline{V}^h_\partial, \label{eq:fdela} \\
& \int_I y^{m+1}_h \cdot \xi |x^m_{h,\rho}|^2 \,\,d\rho
+ \int_I x^{m+1}_{h,\rho} \cdot \xi_\rho \,\,d\rho = 0
\qquad \forall\ \xi \in \underline{V}^h_\partial. \label{eq:fdelb}
\end{align}
\end{subequations}
Note that \eqref{eq:fdel} represents a square linear system for $(x^{m+1}_h,y^{m+1}_h)$, for
which we have the following existence and uniqueness result. 
\begin{lemma} If $| x^m_{h,\rho} | > 0$ in $I$, then \eqref{eq:fdel} has
a unique solution $(x^{m+1}_h,y^{m+1}_h) \in \underline{V}^h \times \underline{V}^h_\partial$.
\end{lemma}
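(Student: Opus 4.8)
The plan is to exploit that \eqref{eq:fdel} is a square linear system in the finite-dimensional space $\underline{V}^h\times\underline{V}^h_\partial$: since the number of unknowns equals the number of equations, unique solvability for arbitrary data is equivalent to injectivity, i.e.\ to the assertion that the associated homogeneous system admits only the trivial solution. First I would set the data $x^m_h$ (in the discrete time derivative and in the affine constraint $x^{m+1}_h-x^m_h\in\underline{V}^h_\partial$) together with the purely explicit $\myF_3$-term to zero, so that one is left to show: the only pair $(X,Y)\in\underline{V}^h_\partial\times\underline{V}^h_\partial$ satisfying, for all $\chi,\xi\in\underline{V}^h_\partial$, the homogeneous analogues of \eqref{eq:fdela}--\eqref{eq:fdelb} is $(X,Y)=(0,0)$. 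Here the hypothesis $|x^m_{h,\rho}|>0$ on $I$ enters decisively: on the fixed mesh $x^m_{h,\rho}$ is piecewise constant and nonvanishing, so $(v,w)_m:=\int_I v\cdot w\,|x^m_{h,\rho}|^2\,d\rho$ is a genuine inner product on $\underline{V}^h$, with induced norm $\|\cdot\|_m$ equivalent to the $L^2(I)$-norm.

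The energy step is the natural core of the argument. I would test the homogeneous version of \eqref{eq:fdela} with $\chi=X$ and that of \eqref{eq:fdelb} with $\xi=Y$ and add the two identities; the off-diagonal stiffness contributions $-\int_I Y_\rho\cdot X_\rho\,d\rho$ and $+\int_I X_\rho\cdot Y_\rho\,d\rho$ cancel exactly, leaving
\[
\tfrac{1}{\Delta t}\|X\|_m^2+\|Y\|_m^2 = \mathcal R(X,Y),
\]
where $\mathcal R$ collects the semi-implicit convective and reaction terms. The key structural input is that $\myF_2(a,b,c)=2(c\otimes a-a\otimes c)+2(a\cdot b)(a\otimes b-b\otimes a)$ is an antisymmetric matrix, so that $\myF_2(\cdot)Z\cdot Z=0$ pointwise for every $Z$. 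I would use this by producing a second identity: testing the homogeneous \eqref{eq:fdela} with $\chi=Y$ (whereupon the $\myF_2$-term drops out) and \eqref{eq:fdelb} with $\xi=X$, one controls $\|X_\rho\|_{L^2}$ and $\|Y_\rho\|_{L^2}$, and in particular obtains a gradient bound $\|Y_\rho\|_{L^2}\le C\|Y\|_{L^2}$ with $C$ depending only on the fixed data $x^m_{h,\rho},y^m_h,y^m_{h,\rho}$.

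The hard part will be to turn $\mathcal R(X,Y)$ --- which, unlike the positive left-hand side, is not sign-definite --- into something absorbable. The dangerous contributions are exactly the semi-implicit pieces originating from $\myF_1$, namely $2\int_I (Y_\rho\cdot x^m_{h,\rho})(y^m_h\cdot X)\,d\rho$ and $\int_I |x^m_{h,\rho}|^2(y^m_h\cdot Y)(y^m_h\cdot X)\,d\rho$, together with the (now off-diagonal) $\myF_2$-term. I would bound each by Cauchy--Schwarz and Young's inequality, invoking the gradient bound above to remove the derivative $Y_\rho$, and then use the norm equivalence to write $|\mathcal R(X,Y)|\le C\,\|X\|_m\|Y\|_m$. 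Combined with the two identities this yields $\|Y\|_m\le C\|X\|_m$ and finally forces the positive left-hand side to vanish, so $X=Y=0$. I expect this absorption of the non-symmetric convective terms to be the only delicate point; it is precisely the step where the careful semi-implicit structure of the scheme of \cite{DN25} is exploited, and the free-boundary modification should affect nothing beyond restricting all test functions to $\underline{V}^h_\partial$ (which also makes the relevant boundary contributions vanish), so that the estimate parallels the closed-curve solvability proof there.
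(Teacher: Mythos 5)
Your reduction to the homogeneous system, the use of the weighted inner product $(v,w)_m$ (where $|x^m_{h,\rho}|>0$ on the fixed mesh indeed makes $\|\cdot\|_m$ equivalent to the $L^2$-norm), and the identification of the antisymmetry $\myF_2(a,b,c)v\cdot v=0$ as the key structural fact are all correct and consistent with the paper. The genuine gap is in your concluding step. From $\tfrac1{\Delta t}\|X\|_m^2+\|Y\|_m^2=\mathcal R(X,Y)$ together with $|\mathcal R(X,Y)|\le C\,\|X\|_m\|Y\|_m$, Young's inequality only yields $\bigl(\tfrac1{\Delta t}-\tfrac{C^2}{2}\bigr)\|X\|_m^2+\tfrac12\|Y\|_m^2\le 0$, which forces $X=Y=0$ \emph{only when} $\Delta t<2/C^2$; here $C$ depends on $x^m_{h,\rho}$, $y^m_h$, $y^m_{h,\rho}$, hence on the time level $m$. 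So your argument proves unique solvability only under a smallness condition on the time step, whereas the lemma (and the paper's proof) is unconditional in $\Delta t$. Nothing in your two identities, as you propose to combine them, rules out a nontrivial kernel for large $\Delta t$ — the bound $\|Y\|_m\le C\|X\|_m$ does not ``force the positive left-hand side to vanish''.

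Ironically, your ``second identity'' already contains the complete proof: testing the homogeneous \eqref{eq:fd0a} with $\chi=Y_h$ and \eqref{eq:fd0b} with $\xi=X_h$ is, up to the factor $-\Delta t$, exactly the paper's test-function choice $\chi=-\Delta t\,Y_h$, $\xi=X_h$. After the $\myF_2$-term drops by antisymmetry, the dangerous cross term $2\Delta t\int_I(Y_{h,\rho}\cdot x^m_{h,\rho})(y^m_h\cdot Y_h)\,d\rho$ combines \emph{exactly} with $\Delta t\int_I|Y_{h,\rho}|^2\,d\rho$ and $\Delta t\int_I|x^m_{h,\rho}|^2(y^m_h\cdot Y_h)^2\,d\rho$ into the perfect square $\Delta t\int_I\bigl|Y_{h,\rho}+(y^m_h\cdot Y_h)\,x^m_{h,\rho}\bigr|^2\,d\rho$, so one obtains $0=\int_I|X_{h,\rho}|^2\,d\rho+\Delta t\int_I|Y_{h,\rho}+(y^m_h\cdot Y_h)x^m_{h,\rho}|^2\,d\rho$ with no constants to absorb and no restriction on $\Delta t$. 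Hence $X_{h,\rho}=0$; then \eqref{eq:fd0b} with $\xi=Y_h$ and the hypothesis $|x^m_{h,\rho}|>0$ give $Y_h=0$, after which \eqref{eq:fd0a} gives $X_h=0$. Your plan instead extracts from this identity only the gradient bound $\|Y_{h,\rho}\|_{L^2}\le C\|Y_h\|_{L^2}$ (which is derivable, but weaker) and re-injects it into a sign-indefinite estimate, thereby discarding precisely the algebraic cancellation that makes the semi-implicit scheme unconditionally solvable. The repair is simply to push your own second identity through the completion of the square instead of routing through Cauchy--Schwarz and Young.
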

\begin{proof} It is sufficient to prove that the corresponding homogeneous system only has the trivial solution. Hence, let
$(X_h,Y_h) \in \underline{V}^h_\partial \times \underline{V}^h_\partial$ be such that
\begin{subequations} 
\begin{align}
\int_I \frac{X_h}{\Delta t} \cdot \chi |x^m_{h,\rho}|^2 \,\,d\rho 
- \int_I Y_{h,\rho} \cdot \chi_\rho \,\,d\rho    
& = 2 \int_I (Y_{h,\rho} \cdot x^m_{h,\rho}) y^m_h \cdot \chi \,\,d\rho 
+ \int_I | x^m_{h,\rho} |^2 (y^m_h \cdot Y_h) y^m_h \cdot \chi
\,\,d\rho \nonumber \\ & \qquad
+ \int_I \myF_2(x^m_{h,\rho},y^m_h,y^m_{h,\rho}) Y_h \cdot \chi \,\,d\rho  
\qquad \forall\ \chi \in \underline{V}^h_\partial, \label{eq:fd0a} \\
\int_I Y_h \cdot \xi |x^m_{h,\rho}|^2 \,\,d\rho 
+ \int_I X_{h,\rho} \cdot \xi_\rho \,\,d\rho  & = 0
\qquad \forall\ \xi \in \underline{V}^h_\partial. \label{eq:fd0b}
\end{align}
\end{subequations}
Choosing $\chi = -\Delta t Y_h$ in \eqref{eq:fd0a}, $\xi = X_h$ in 
\eqref{eq:fd0b} and summing the two gives, upon observing that
$\myF_2(a,b,c) v \cdot v=0$ for any $v \in \mathbb R^2$, that
\begin{align*}
0 & = \int_I |X_{h,\rho}|^2 \,\,d\rho 
+ \Delta t \int_I |Y_{h,\rho}|^2 \,\,d\rho  
+ 2 \Delta t \int_I (Y_{h,\rho} \cdot x^m_{h,\rho}) y^m_h \cdot Y_h 
\,\,d\rho 
+ \Delta t \int_I | x^m_{h,\rho} |^2 (y^m_h \cdot Y_h)^2 \,d\rho  \\ & 
= \int_I |X_{h,\rho}|^2 \,d\rho  +
\Delta t \int_I | Y_{h,\rho} + (y^m_h \cdot Y_h) x^m_{h,\rho} |^2 
\,d\rho .
\end{align*}
It follows that $X_{h,\rho} = 0$ in $I$, and so first
\eqref{eq:fd0b} together with the assumption on $| x^m_{h,\rho} |$ implies that $Y_h=0$, after which we infer from \eqref{eq:fd0a}  that $X_h=0$. 
\end{proof}

We observe that the scheme \eqref{eq:fdel} is agnostic to the exact location of the two vertical support lines: they are implicitly defined through the positions of the endpoints of $x^m_h$, and hence of $x^0_h$. For simplicity, our presented numerical simulations do not directly approximate \eqref{FEF} with the support lines $\eta_{\pm1}(\R)$, but rather \eqref{FEF} with $\SX$ replaced by $\hat\SX$ from Lemma~\ref{lem:scaling}, with the choice of support lines $\ell_\pm = \{ (x^0_h(\pm1) \cdot e_1, \rho) : \rho \in \R\}$. For the case $\ell_- \not=\ell_+$ the lemma defines a rescaled equivalent flow for \eqref{FEF}, which crucially does not affect the condition \eqref{EQmtic} from Theorem~\ref{TMmain}. In the case $x^0_h(-1) \cdot e_1=x^0_h(1) \cdot e_1$ our numerical simulations approximate the flow \eqref{eq:limitFEF}.

We implemented \eqref{eq:fdel} within the
finite element toolbox Alberta, \cite{Alberta}, using
the sparse factorization package UMFPACK, see \cite{Davis04},
for the solution of the linear systems of equations arising at each time level.
For all our numerical simulations we use a uniform partitioning of $[-1,1]$,
so that $q_j = -1 + jh$, $j=0,\ldots,J$, with $h = \frac 2J$.
Unless otherwise stated, we use the discretization parameters $J=4096$
and $\Delta t = 10^{-4}$. When plotting several discrete curves in a single
figure, we use the following default colour convention: 
\textcolor{blue}{\bf blue} for the first curve,
\textcolor{red}{\bf red} for the last curve and
\textcolor{black}{\bf black} for any intermediate curves.
For later use, we define the discrete energies
\begin{equation} \label{eq:Em}
\E^m = \tfrac12 \int_I |P^m_h y^m_h |^2 |x^m_{h,\rho}| \,d\rho,
\end{equation}
where $P^m_h = I_2 - \frac{x^m_{h,\rho}}{|x^m_{h,\rho}|} \otimes
\frac{x^m_{h,\rho}}{|x^m_{h,\rho}|}$, as well as
\begin{equation} \label{eq:tildeEm}
\siE^m = \E^m \int_I |x^m_{h,\rho}| \,d\rho,
\end{equation}
recall \eqref{EQmtic}. 

\subsection{The curve from Figure~\ref{figure:3}} \label{sec:figure3}

Our first set of numerical simulations is for the initial curve depicted in
Figure~\ref{figure:3}. We begin by stating an explicit construction of a
possible parametrisation of this curve. Let
\begin{equation} \label{eq:a}
k:=0.71\quad \text{and}\quad a = a^\star :=\sqrt{\frac{\pi}{K(k)^2-\frac{1}{k^2}(K(k)^2-E(k)K(k))}}
=1.4123377600\ldots,
\end{equation}
recall \eqref{eq:rem3.6}, and choose $\mykappa$ as in \eqref{eq:rem3.5}.
We then define
\begin{equation} \label{eq:theta}
\theta(s) := \int_0^s \mykappa(\varrho)\, d\varrho = \frac{a}k \arctan \frac{k \sn(s,k)}{\dn(s,k)}
\quad \mbox{on}\quad [0,L(k)],
\end{equation}
and let, compare with \eqref{eq:gammac},
\begin{equation} \label{eq:x0special}
x_0(\rho) := \int_0^{\frac{1+\rho}2 L(k)} \binom{\cos \theta(s)}{\sin \theta(s)}\,ds
\quad \mbox{on}\quad \overline{I}.
\end{equation}
The discrete initial data is then defined as $x^0_h = \pi^h x_0$,
where $\pi^h:C^0(\overline{I},\mathbb R^2)\to \underline{V}^h$ 
is the standard Lagrange interpolation operator, defined
for any $v \in C^0(\overline{I},\mathbb R^2)$ via
$(\pi^h v)(\rho_j) = v(\rho_j)$, $j=0,\ldots,J$.
Here, for the calculation of the integrals in 
\eqref{eq:x0special}, we employ a Romberg integration method, and in 
addition make use of the symmetry
\begin{equation*} 
x_0(\rho) = 2 x_0(0) - x_0(- \rho) \quad \mbox{on}\quad [0,1],
\end{equation*}
so that the length of the domain of integration is never longer than 
$\frac12 L(k)$. For the computation of the upper integration limit and the 
integrand in \eqref{eq:x0special}, recall \eqref{eq:theta} and \eqref{eq:a}, we make use of the GNU Scientific Library 
(GSL) for accurate implementations of $E(k)$, $K(k)$, $\sn(\cdot, k)$ and
$\dn(\cdot, k)$.
In addition, for the discrete initial data $y^0_h$ we choose
$y^0_h = \pi^h y_0$, where $y_0 = \frac{x_{0,\rho\rho}}{|x_{0,\rho}|^2}$
is defined by
\begin{equation} \label{eq:grunau_y0}
y_0(\rho) := \hat y_0(\tfrac{1+\rho}2 L(k))
\quad \mbox{on}\quad \overline{I}, \qquad \text{with}\quad
\hat y_0(s) := \mykappa(s) 
\binom{-\sin\theta(s)}{\cos\theta(s)}
\quad \mbox{on}\quad [0,L(k)].
\end{equation}

We begin with a consistency test for the chosen discrete initial data
$(x_h^0, y_h^0)$, and investigate the error of the discrete energy
$\siE^0$, see \eqref{eq:tildeEm}, compared to the true value $2\pi$, where we
recall that
\[
\siE[x_0] = 
\tfrac12 L(k) \int^{L(k)}_0 u(s)^2\, ds = 2\pi.
\]
The results are shown in Table~\ref{tab:ediff} and confirm a consistent
approximation of the discrete energy.
\begin{table}
\center
\begin{tabular}{r|c|c|c|c}
$J$ & 1024 & 4096 & 16384 & 65536 \\
\hline
$\siE^0 - 2\pi$ & -4.8248e-05 & -3.0155e-06 & -1.8847e-07 & -1.1779e-08 \\
\end{tabular}
\caption{Numerical approximations of $\siE[x_0] = 2\pi$ for
\eqref{eq:tildeEm} with \eqref{eq:x0special}, \eqref{eq:grunau_y0}.}
\label{tab:ediff}
\end{table}%

Next we are interested in the evolution under the free boundary free elastic flow for this chosen 
initial data. To this end, we integrate the flow on the time interval 
$[0,T]$ for $T=10$, using our algorithm \eqref{eq:fdel}. As expected, the
evolution is very slow, and so for now we only consider the behaviour of the
discrete energy $\siE^m$ from \eqref{eq:tildeEm} at relatively early times. For a discussion of the
(long time) behaviour of the curve itself we refer to later parts of this
section. For now we show on the left of Figure~\ref{fig:siEm} 
the evolution of the discrete
energy $\siE^m$ for the four sets of discretization parameters from
Table~\ref{tab:ediff},
where for the time step sizes we fix $\Delta t = 0.2048 h$.
We can see that the three runs with the finer
discretization parameters agree that the curve begins to grow. In fact, the
finer the parameters, the clearer the growth. This indicates
that the chosen initial data does not go to a straight line under the flow.
\begin{figure}
\center
\includegraphics[angle=-0,width=0.45\textwidth]{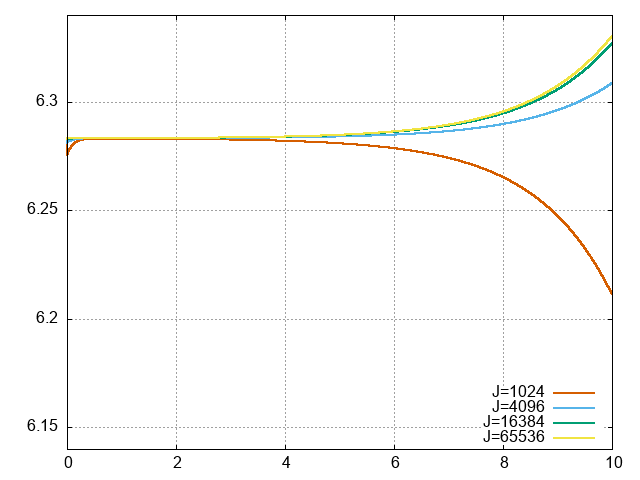}\qquad
\includegraphics[angle=-0,width=0.45\textwidth]{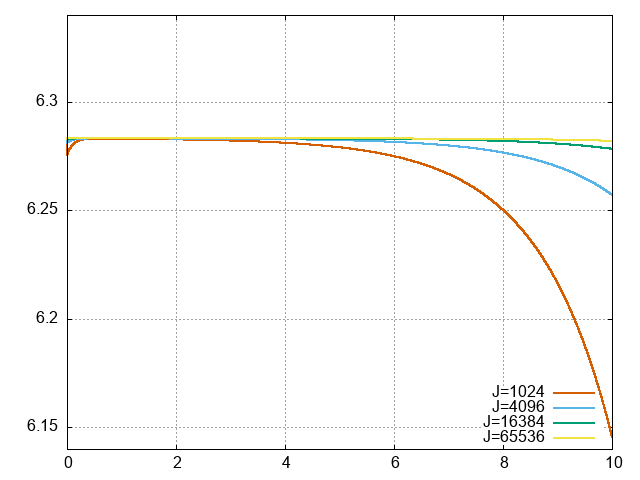}
\caption{The evolution of $\siE^m$ for the initial data \eqref{eq:x0special},
\eqref{eq:grunau_y0} (left), and for the initial data 
\eqref{eq:ere}, \eqref{eq:ere_y0} (right).
}
\label{fig:siEm}
\end{figure}%

\subsection{Euler's rectangular elastica}
As a comparison to the computations in the previous subsection, we now consider
a known stationary solution for the free boundary free elastic flow, that is, a half-period of  
Euler's rectangular elastica. We begin by stating a possible parametrisation of
this curve, compare with Appendix~\ref{AA}. Let
\[
k := \frac1{\sqrt{2}} \quad \text{ and } \quad L(k) := 2K(k),
\]
and choose
\begin{equation} \label{eq:ere}
x_0(\rho) := \hat x_0(\tfrac{1+\rho}2 L(k))
\quad \text{ on } \quad \overline{I},
\qquad \text{with} \quad
\hat x_0(s) = 
\begin{pmatrix} 2E(\AM(s,k),k) - s\\ -\sqrt2 \cn(s,k) \end{pmatrix} 
\quad \text{ on } \quad [0, L(k)].
\end{equation}
Correspondingly, we define $y_0 = \frac{x_{0,\rho\rho}}{|x_{0,\rho}|^2}$ as
\begin{equation} \label{eq:ere_y0}
y_0(\rho) := \hat y_0(\tfrac{1+\rho}2 L(k))
\quad \text{ on } \quad \overline{I},
\quad \text{with} \quad
\hat y_0(s) := 
\begin{pmatrix} -2\sn(s,k)\cn(s,)\dn(s,k) \\ \sqrt{2}\cn^3(s,k) 
\end{pmatrix} \quad \text{ on } \quad [0, L(k)].
\end{equation}
For the numerical experiments we then choose $x_h^0 = \pi^h x_0$ and
$y_h^0 = \pi^h y_0$ as before.
Similarly to Table~\ref{tab:ediff}, we confirm the consistency of the discrete
energy \eqref{eq:tildeEm} in Table~\ref{tab:ediff2}.
\begin{table}
\center
\begin{tabular}{r|c|c|c|c}
$J$ & 1024 & 4096 & 16384 & 65536 \\
\hline
$\siE^0 - 2\pi$ & -4.8216e-05 & -3.0135e-06 & -1.8834e-07 & -1.1771e-08\\
\end{tabular}
\caption{Numerical approximations of $\siE[x_0] = 2\pi$ for
\eqref{eq:tildeEm} with \eqref{eq:ere}, \eqref{eq:ere_y0}.}
\label{tab:ediff2}
\end{table}%
In addition, on the right of Figure~\ref{fig:siEm} we plot the evolution
for the discrete energy $\siE^m$ when using our algorithm \eqref{eq:fdel}
for the initial data \eqref{eq:ere}, \eqref{eq:ere_y0}. Of course, the
continuous initial data is a known stationary solution. However, as
discussed in Appendix~\ref{AB}, it is linearly unstable. Hence for a numerical
algorithm, due to the presence of numerical noise and rounding errors,
it is to be expected that the discrete approximations eventually move away from
the initial data. This is exactly what can be observed on the right of
Figure~\ref{fig:siEm}. However, unlike in the previous example (shown on the
left of the same figure), here the change in the energy gets smaller and
smaller, the finer the discretization parameters become. In other words, for finer
discretization parameters the algorithm
\eqref{eq:fdel} captures the stationarity of the continuous initial data
better.

\subsection{Curves close to the curve from Figure~\ref{figure:3}}

In this section we consider the same initial data as in {Section~}\ref{sec:figure3},
but now instead of \eqref{eq:a} we just fix $k=0.71$, and let
$a$ vary. We then still define the initial curve via \eqref{eq:x0special} with
\eqref{eq:theta}, and let $y_0$ be defined as in \eqref{eq:grunau_y0}. 
If we choose $a = 1.4 < a^\star$, then the curve very quickly evolves
to a straight line, as can be seen in Figure~\ref{fig:a14}.
\begin{figure}
\center
\includegraphics[angle=-0,width=0.25\textwidth]{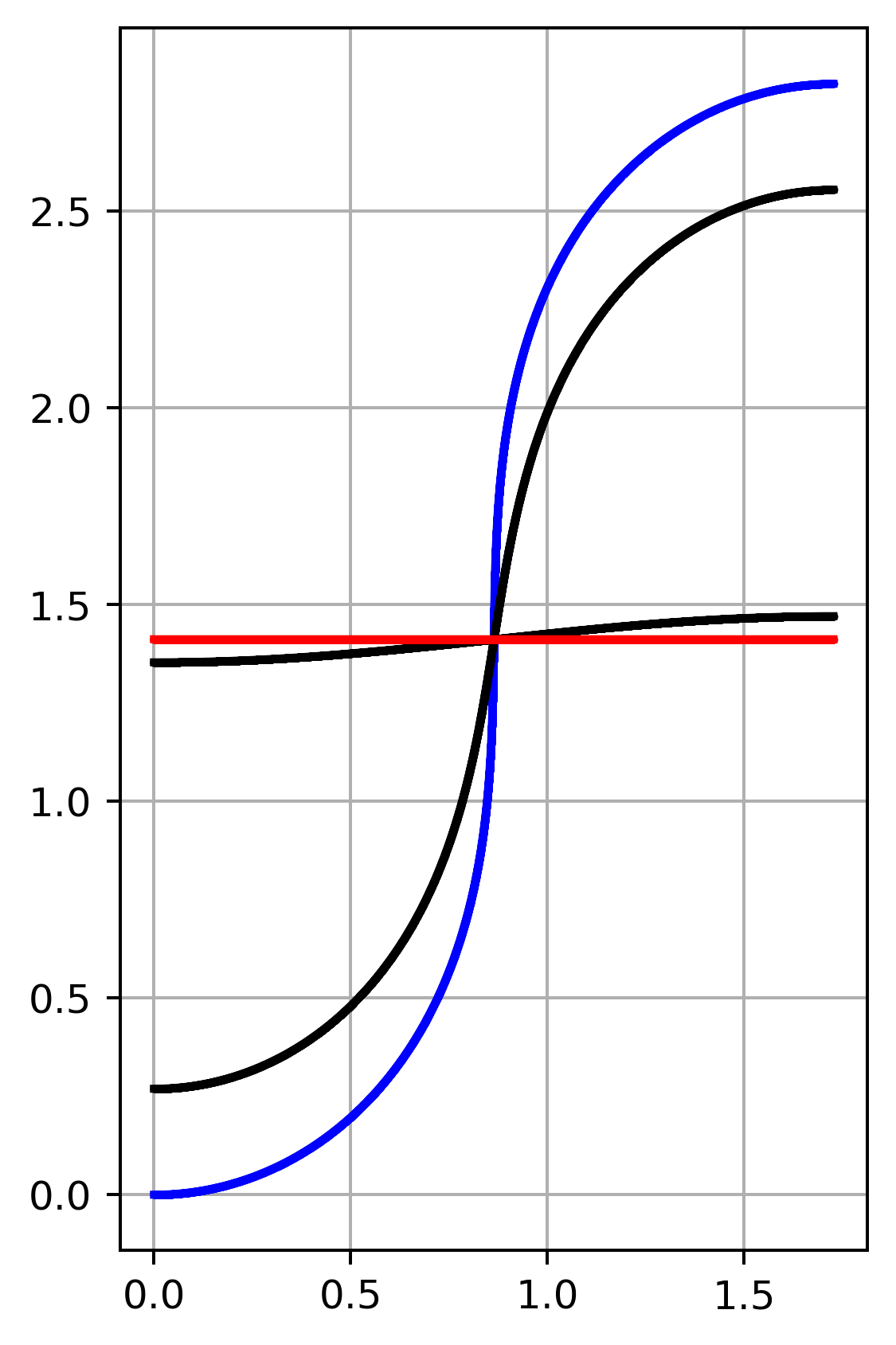}
\qquad
\includegraphics[angle=-0,width=0.45\textwidth]{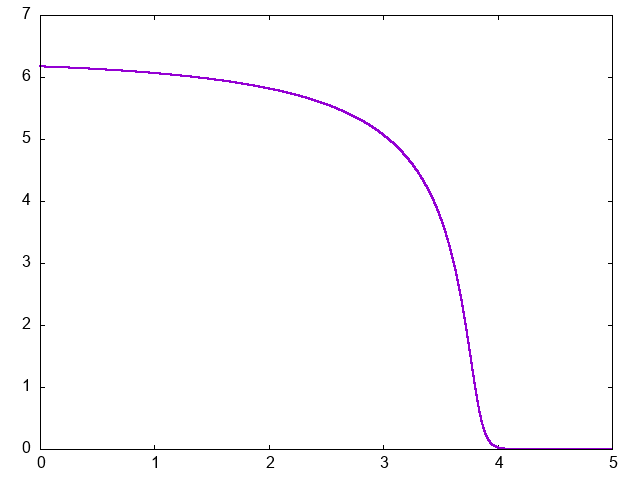}
\caption{Free boundary free elastic flow for a curve with $a=1.4$. We show
$x_h^m$ at times $t=0,3,4,5$, and a plot of $\siE^m$ over time.
}
\label{fig:a14}
\end{figure}%
However, if we let $a = 1.5 > a^\star$, then the curve starts to grow
indefinitely, as can be seen in Figure~\ref{fig:a15}.
\begin{figure}
\center
\includegraphics[angle=-0,width=0.12\textwidth]{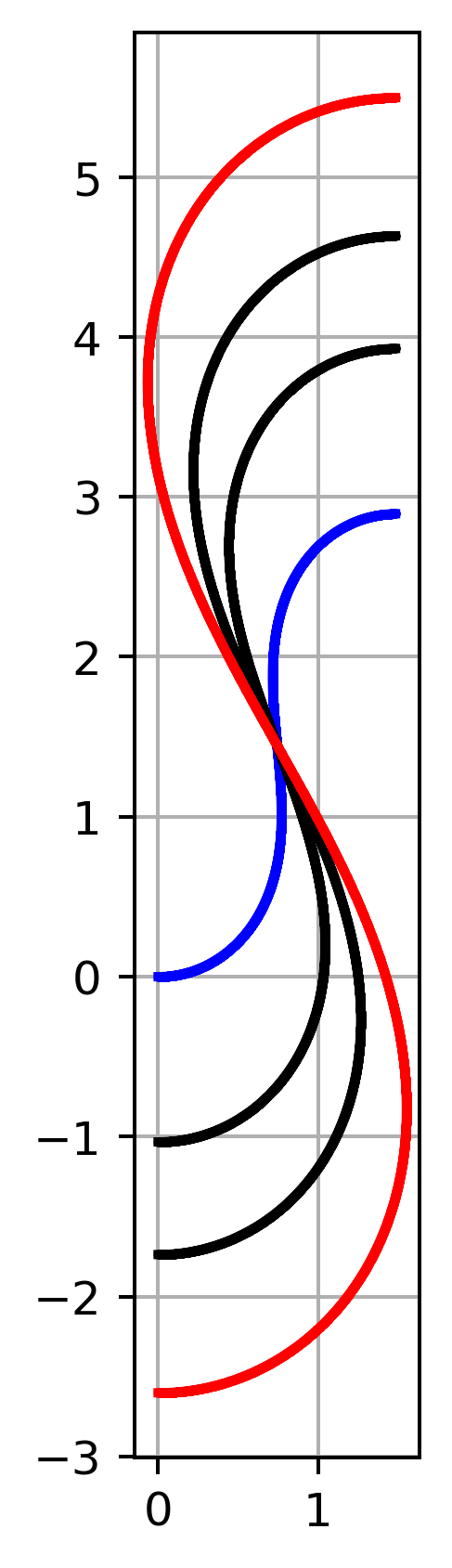}
\includegraphics[angle=-0,width=0.24\textwidth]{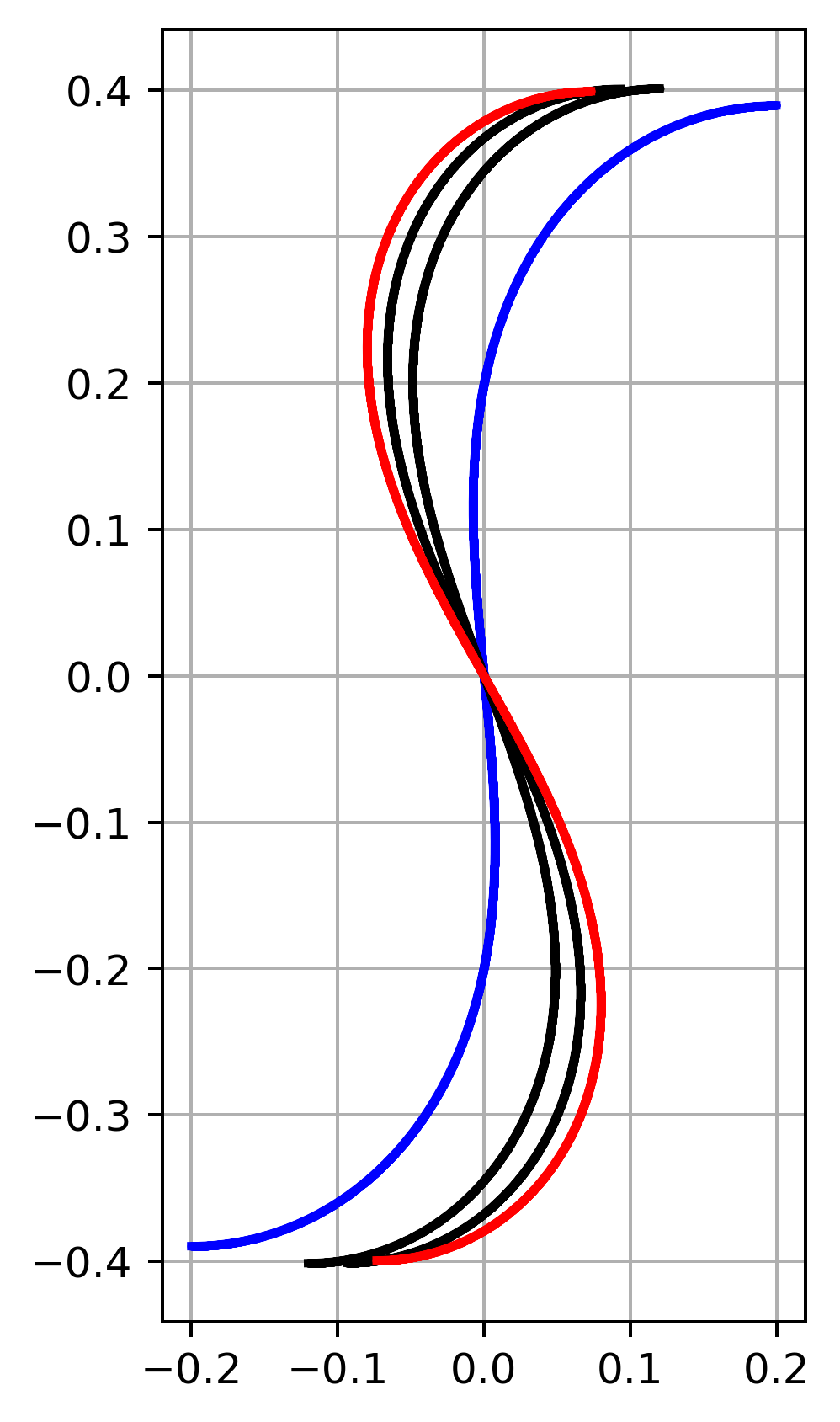}
\qquad
\includegraphics[angle=-0,width=0.45\textwidth]{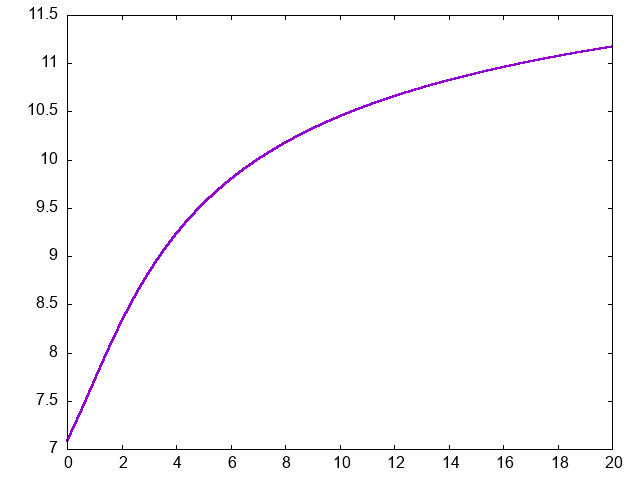}
\caption{Free boundary free elastic flow for a curve with $a=1.5$. We show
$x_h^m$ at times $t=0,5,10,20$, the same curves rescaled by length,
and a plot of $\siE^m$ over time.
}
\label{fig:a15}
\end{figure}%

Inspired by these two simulations, we now attempt to find reliable lower and
upper bounds for the critical value $a_c$ of $a$ at the border between 
``going to a straight line'' and ``growing indefinitely''. To this end we use
our scheme \eqref{eq:fdel} for the described initial data, and observe whether
at large times the curve has evolved to a straight line, or whether it
continues to grow. 
The numerical values reported in Table~\ref{tab:ac}, using the same
discretization parameters as in Figure~\ref{fig:siEm}, suggest
that the critical value $a_c$ lies in the interval
$(1.41233, 1.41234)$.
\begin{table}
\center
\begin{tabular}{r|c|c}
$J$ &
lower bound: $a$ and $\siE^0/(2\pi)$ & upper bound: $a$ and $\siE^0/(2\pi)$ \\
\hline
1024 & 1.4123: 9.9994e-01 & 1.4124: 1.0001e-00 \\ 
4096 & 1.41233: 9.9999e-01 & 1.41234: 1.000e-00 \\ 
16384& 1.41233: 9.9999e-01 & 1.41234: 1.000e-00 \\ 
65536& 1.41233: 9.9999e-01 & 1.41234: 1.000e-00 \\
\end{tabular}
\caption{Numerical estimates for lower and upper bounds on $a_c$.
}
\label{tab:ac}
\end{table}%

\subsection{Long time behaviour}

Looking at Figure~\ref{fig:a15}, a natural question arises: what is the
limiting shape of the rescaled curves? 
Recall also Conjecture~\ref{conj:3}.
In this section we investigate the long
time behaviour for various interesting initial data. Here we will always define
some initial discrete parametrisation $x_0^h$, and then compute $y_0^h \in
\underline{V}^h_\partial$ as the unique solution to
\begin{equation} \label{eq:y0}
\int_I y^{0}_h \cdot \xi |x^0_{h,\rho}|^2\, d\rho 
+ \int_I x^{0}_{h,\rho} \cdot \xi_\rho\,d\rho = 0
\qquad \forall\ \xi \in \underline{V}^h_\partial,
\end{equation}
compare with \eqref{eq:fdelb}.

We begin with an initial curve that is made up of two straight line segments
and two unit circles. Note that since the curvature undergoes two jumps, this
is not a possible minimizer or stationary state. Yet, one could imagine that the
curve finds some (rescaled) stationary solution close by. Instead, the
simulation shown in Figure~\ref{fig:8} demonstrates an elaborate untangling,
with the growing curve eventually attaining the limiting shape already seen
in Figure~\ref{fig:a15}. To enable this extremely long computation, we have
employed an adaptive time step strategy for this simulation.
\begin{figure}
\center
\includegraphics[angle=-0,width=0.25\textwidth]{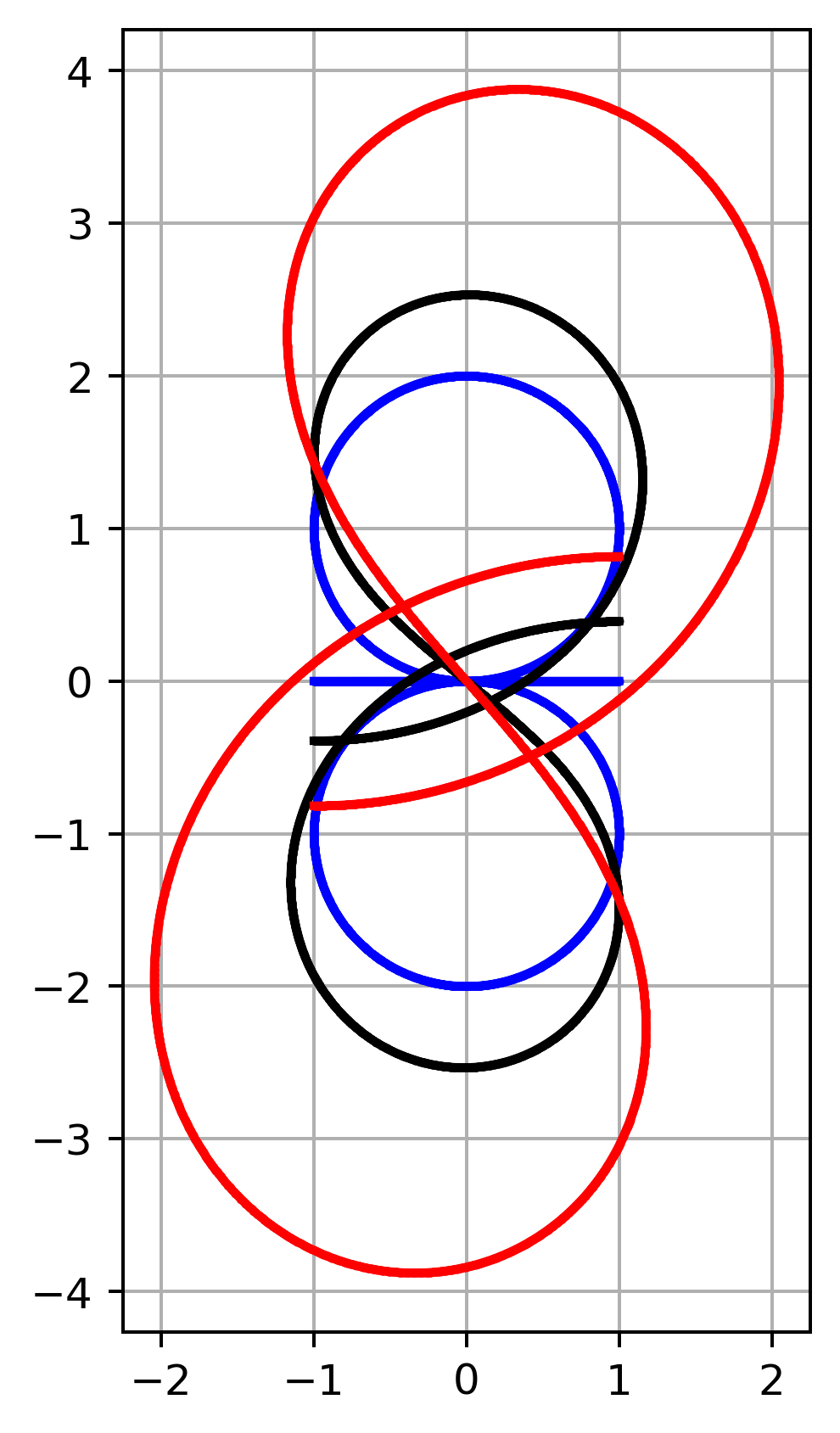}\
\includegraphics[angle=-0,width=0.26\textwidth]{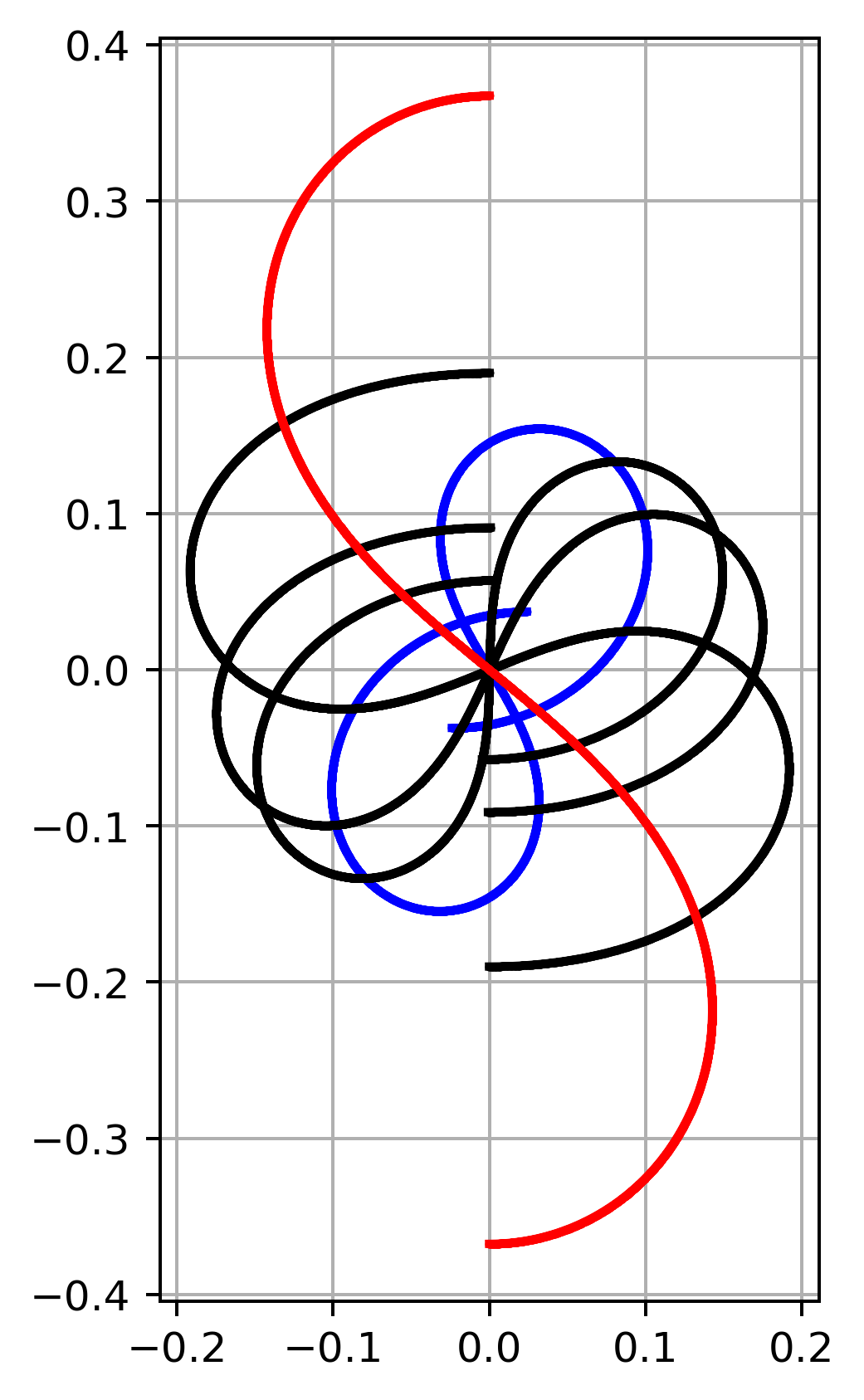}
\caption{Free boundary free elastic flow for a figure-eight curve made up of two straight line
segments and two unit circles. We show
$x_h^m$ at times $t=0,1,10$ (left), 
and at times $t=100,10^5, 10^8, 10^{11}, 10^{12}$
rescaled by length (right).
}
\label{fig:8}
\end{figure}%

The observed simulation suggests that eventually the (rescaled) curve 
approaches a half-period of Bernoulli's lemniscate, a well-known self-similar expander
for the flow \eqref{eq:limitFEF}. With the next experiment we want to
investigate whether starting with a full period of Euler's rectangular elastica might lead
to a different long term evolution. Here we observe that the initial curve has
both ends attached to the $y$-axis, but that it does
not satisfy the curvature boundary conditions, and so it is not a steady state solution
of \eqref{eq:limitFEF}. 
However, the numerical simulation shown in Figure~\ref{fig:elastica} indicates
that also in this case the shape of half a lemniscate is
approached at large times. In particular, we observe that the rescaled shapes
at times $t=10$ and $t=20$ overlap perfectly to the naked eye.
\begin{figure}
\center
\includegraphics[angle=-0,width=0.26\textwidth]{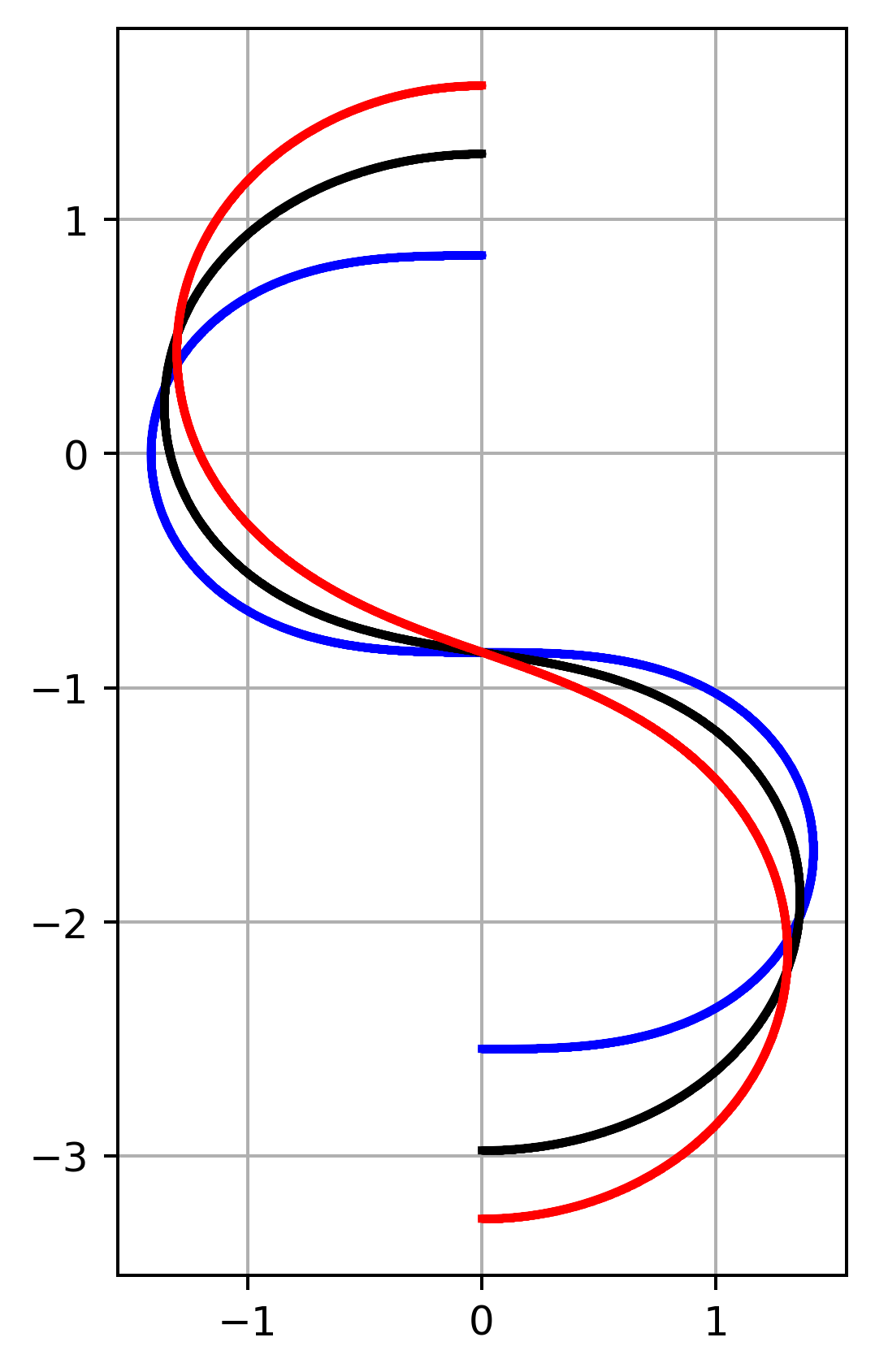}
\includegraphics[angle=-0,width=0.24\textwidth]{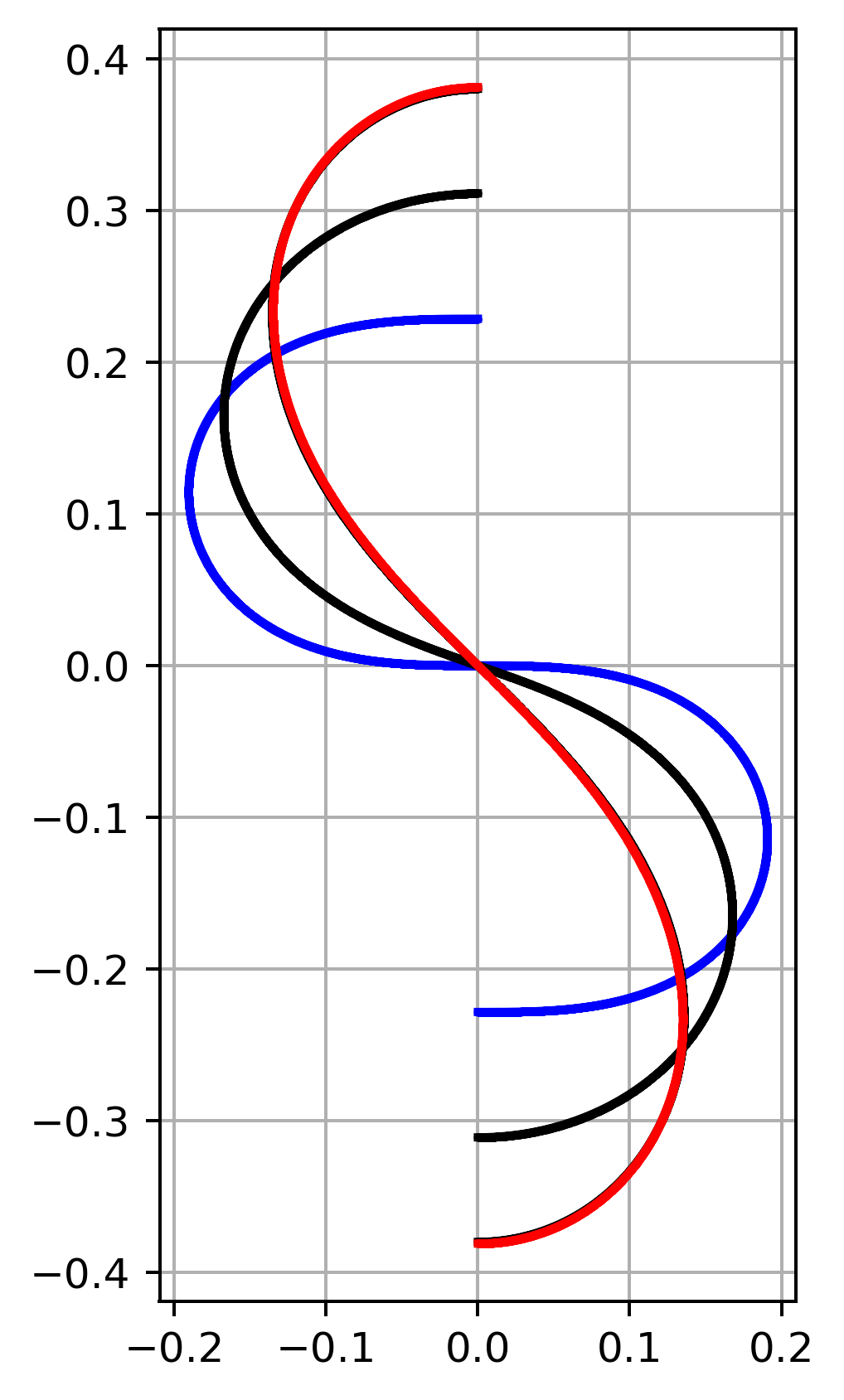}
\qquad
\includegraphics[angle=-0,width=0.18\textwidth]{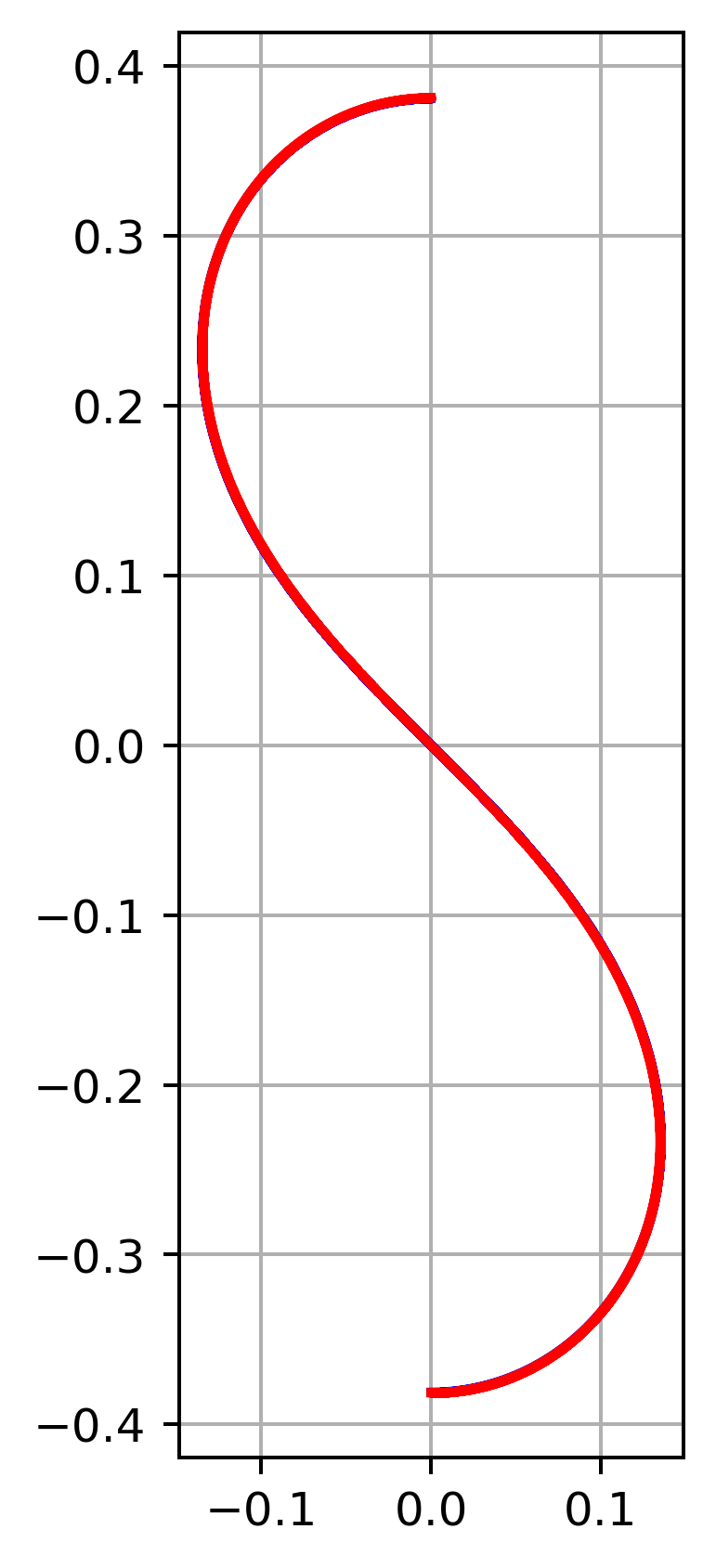}
\caption{Free boundary free elastic flow for a rectangular elastica. We show
$x_h^m$ at times $t=0,0.5,1$ (left), 
and at times $t=0,1,10,20$ rescaled by length (middle).
On the right we show a comparison between $x^h_m$ at time $t=20$ (blue)
and half a lemniscate (red, overlapping), both rescaled by length.
}
\label{fig:elastica}
\end{figure}%

As a final confirmation that the limiting shape is indeed that of half a
lemniscate, we show on the right of Figure~\ref{fig:elastica} a comparison
between the rescaled curve $x^h_m$ at the final time $t=20$, and a
rescaled version of the lemniscate curve defined by the parametrisation
\[
x_0(\rho) =
\begin{cases}
(-\cos^\frac12(\frac{1+\rho}2 \pi) \sin(\frac{1+\rho}4 \pi), 
\cos^\frac12(\frac{1+\rho}2 \pi) \cos(\frac{1+\rho}4 \pi))^T
& \rho < 0, \\
(\cos^\frac12(\frac{1-\rho}2 \pi) \sin(\frac{1-\rho}4 \pi), 
-\cos^\frac12(\frac{1-\rho}2 \pi) \cos(\frac{1-\rho}4 \pi))^T
& \rho \geq 0. \\
\end{cases}
\]
As expected, the two shapes on the right of Figure~\ref{fig:elastica} 
overlap perfectly.


\section*{Acknowledgements}

The Otto-von-Guericke Universit\"at Magdeburg supported two visits of the fourth author and one visit of the third author, where this work was both initiated and in the majority completed.
They are grateful for the support and thank their hosts for their generous hospitality. The fifth author acknowledges support from ARC DECRA DE190100379.

\appendix

\section{On Euler's rectangular elastica}
\label{AA}

{In this appendix we derive the parametrisations used in Figure~\ref{figrect}, as well as some of their properties.}

{Throughout, we fix} the Jacobi modulus 
$$k=\tfrac1{\sqrt2}. 
$$
Let \(\sn,\cn,\dn\) denote the Jacobi elliptic functions {from \eqref{eq:2.1.04} and \eqref{eq:2.1.05}, and let 
$$
K=K(k)=K\left(\tfrac1{\sqrt2}\right), \quad E=E(k)=E\left(\tfrac1{\sqrt2}\right)
$$
be the complete elliptic integrals of the first and second kind {from \eqref{eq:2.1.01}
and \eqref{eq:2.1.02}.}
The turning angle \(\theta\) {of Euler's rectangular elastica} satisfies 
\begin{equation}\label{eq:thetaprime}
\theta'(s)=\kappa(s){=\sqrt2\,\mya\,\cn(\mya s,k)},
\end{equation} 
{where $\mya>0$ is a parameter that will be fixed later. Using}
\(
\mya{\int_0^s \cn(\mya {r},k)\,d{r}}=\sqrt2\arcsin\!\big({k}\,\sn(\mya s,{k})\big)  
\)
{we have that}
\(
\theta(s)=2\arcsin\!\big({k}\,\sn(\mya s,{k})\big),
\)
{and}
consequently
\[
\cos\theta(s)=\cn^2(\mya s,{k}),\qquad \sin\theta(s)=\sqrt2\,\sn(\mya s,{k})\,\dn(\mya s,{k}).
\]
{An} explicit arclength parametrisation \({\gamma(s)=}(\gamma^1(s),\gamma^2(s))\) is obtained by integrating \({(\gamma^1)}'=\cos\theta\), \({(\gamma^2)}'=\sin\theta\):
\begin{equation}
\gamma^1(s)=x_0+\frac1\mya\Big(2\,E({\AM}(\mya s,{k}),{k})-\mya s\Big), \qquad 
\gamma^2(s)=y_0-\frac{\sqrt2}{\mya}\,\cn(\mya s,{k}), \label{eq:gammar}
\end{equation}
where \(E({\cdot,k})\) is the incomplete elliptic integral of the second kind {from \eqref{eq:2.1.02}
and $\AM$ is the Jacobi amplitude from \eqref{eq:2.1.03}}.
{By construction} \(\sqrt{(\gamma^1)'(s)^2+(\gamma^2)'(s)^2}=1\), {and} so \(s\) is arclength.

{We now determine $\mya$ so that we can fit $m$ half-periods between the two support lines $\eta_{\pm1}$. First, on recalling
\eqref{eq:thetaprime}, we have that}
\[
\kappa'(s)=-\sqrt2\,\mya^2\,\sn(\mya s,{k})\,\dn(\mya s,{k}).
\]
Hence \(\kappa'(s)=0\) at {$s=\frac{2jK}\mya$} {for $j\in{\mathbb Z}$} (since \(\sn(2jK,{k})=0\)), and there the tangent {$\gamma'(s)$} is {horizontal}.
For the arclength parametrisation
the horizontal span from {\(s=0\)} to {\(s=\frac{2K}\mya\)} is
\[
\Delta \gamma^1_{\text{half}}={\gamma^1}\!\left(\tfrac{2K}{\mya}\right)-{\gamma^1(0)}
=\frac{1}{\mya}\big(2E(\pi,{k})-2K{(k)}\big)
=\frac{2}{\mya}\big({2E-K}\big).
\]
If there are \(m\) half-periods between the vertical lines \(\eta_{\pm1}\) (total width \(2\)), we obtain
\[
2=m\,\Delta \gamma^1_{\text{half}}
\quad\Longrightarrow\quad
\mya=m\big({2E-K}\big).
\]
With this choice, {the length of the curve $\gamma$ is given by}
\[
L=\frac{2m{K}}{\mya}={\frac{2K}{\,2E-K\,}},
\]
independent of \(m\).
{Moreover, for the bending energies of the curve we obtain $\E[\gamma]=2m\mya (2E-K)$
and $\siE[\gamma]=4m^2 K (2E-K)$, respectively.
On recalling the} classical special values
\(
K(\tfrac1{\sqrt2})=\frac{\Gamma(\tfrac14)^2}{4\sqrt{\pi}}\)
and
\(E(\tfrac1{\sqrt2})=\frac{\Gamma(\tfrac14)^2}{8\sqrt{\pi}}+\frac{\Gamma(\tfrac34)^2}{2\sqrt{\pi}}
\),
{we have that}
\(
2E-K=\frac{\Gamma(\tfrac34)^2}{\sqrt{\pi}}\),
{and so $\mya=m \frac{\Gamma(\tfrac34)^2}{\sqrt{\pi}}$, as well as
$L=\tfrac12\big(\Gamma(\tfrac14)/\Gamma(\tfrac34)\big)^2$
and $\siE[\gamma] = 2\pi m^2$.}

\section{Linearisation of the flow about Euler's rectangular elastica}
\label{AB}
\newcommand{\myeta}{{\xi}}

In this appendix we show that Euler's rectangular elastica are linearly unstable for
the free boundary free elastic flow.

Consider
\[
\myeta(\myu,\varepsilon) = \gamma(\myu) + \varepsilon\, v(\myu)\,\normal(\myu),
\]
where for the moment $\gamma\in\SX$ is any curve satisfying the free boundary
conditions. Later, we will choose $\gamma=\gamma_r$, where $\gamma_r$ is Euler's
rectangular elastica.

For each $\varepsilon$, let $s_\varepsilon$ denote arclength along $\myeta(\cdot,\varepsilon)$,
and let $\tangent_\varepsilon,\normal_\varepsilon,\kappa_\varepsilon$ denote its
unit tangent, unit normal, and scalar curvature, respectively. We write
$s:=s_0$, $\tangent:=\tangent_0$, $\normal:=\normal_0$, $\kappa_0:=\kappa_{\varepsilon}\!\mid_{\varepsilon=0}$
for the corresponding quantities of the base curve $\gamma$.

In order for $\myeta(\cdot,\varepsilon)$ to remain in $\SX$, we require
\[
v_\myu(\pm1) = v_{\myu\myu\myu}(\pm1) = 0\,.
\]
To derive this, it suffices to calculate the tangent, normal, curvature, and their
derivatives for $\myeta$ in terms of the corresponding quantities for $\gamma$, and use
that $\gamma\in \SX$.

The commutator of $\partial_\varepsilon$ and $\partial_{s_\varepsilon}$ is
\[
[\partial_\varepsilon, \partial_{s_\varepsilon}]
 = v\,\kappa_\varepsilon\,\partial_{s_\varepsilon}\,,
\]
which follows from
\[
\partial_\varepsilon\partial_{s_\varepsilon}
= \partial_\varepsilon\left(\frac1{|\myeta_\myu|}\partial_\myu\right)
=
\partial_\varepsilon\left(\frac1{|\myeta_\myu|}\right)\partial_\myu
+ \frac1{|\myeta_\myu|}\partial_\varepsilon\partial_\myu
=
-\frac{\partial_\varepsilon|\myeta_\myu|}{|\myeta_\myu|^2}\partial_\myu
+ \partial_{s_\varepsilon}\partial_\varepsilon
=
-(\partial_\varepsilon\log |\myeta_\myu|)\,\partial_{s_\varepsilon}
+ \partial_{s_\varepsilon}\partial_\varepsilon
\]
and
\[
2|\myeta_\myu|\partial_\varepsilon|\myeta_\myu|
= \partial_\varepsilon|\myeta_\myu|^2
 = 2\partial_\myu(\partial_\varepsilon\myeta)\cdot \tangent_\varepsilon|\myeta_\myu|
 = 2\partial_{s_\varepsilon}(\partial_\varepsilon\myeta)\cdot \tangent_\varepsilon|\myeta_\myu|^2
 = -2\kappa_\varepsilon v |\myeta_\myu|^2
 \,.
\]

Evaluating at $\varepsilon=0$ (so that $\partial_{s_\varepsilon}=\partial_s$ and $\kappa_\varepsilon=\kappa_0$),
we obtain
\begin{align*}
\partial_\varepsilon \tangent_\varepsilon\Big|_{\varepsilon=0}
 &= \partial_s (v\normal)
 + [\partial_\varepsilon,\partial_{s_\varepsilon}]\myeta\Big|_{\varepsilon=0}
 = \partial_s (v\normal)
 + v\kappa_0 \tangent
 = v_s\normal\,,\\
\partial_\varepsilon \normal_\varepsilon\Big|_{\varepsilon=0}
& = (\partial_\varepsilon \normal_\varepsilon\cdot \tangent)\tangent
 = -(\partial_\varepsilon \tangent_\varepsilon\cdot \normal)\tangent
 = -v_s\tangent\,.
\end{align*}
Moreover, since $\kappa_\varepsilon \normal_\varepsilon = (\myeta(\cdot,\varepsilon))_{s_\varepsilon s_\varepsilon}$
is the curvature vector of $\myeta(\cdot,\varepsilon)$, we have
\[
\partial_\varepsilon (\kappa_\varepsilon \normal_\varepsilon)\Big|_{\varepsilon=0}
 = \partial_s\partial_\varepsilon \tangent_\varepsilon\Big|_{\varepsilon=0}
 +  [\partial_\varepsilon,\partial_{s_\varepsilon}]\tangent_\varepsilon\Big|_{\varepsilon=0}
 = \partial_s(v_s\normal)
 +  v\kappa_0^2 \normal
 = (v_{ss} + v\kappa_0^2)\normal
 - \kappa_0 v_s \tangent
 \,.
\]
Thus the first variation of the scalar curvature is
\[
\partial_\varepsilon \kappa_\varepsilon\Big|_{\varepsilon=0}
 = \normal\cdot \partial_\varepsilon (\kappa_\varepsilon \normal_\varepsilon)\Big|_{\varepsilon=0}
 = v_{ss} + v\kappa_0^2
 \,.
\]
Using the commutator twice more (and always evaluating at $\varepsilon=0$) gives
\begin{align*}
\partial_\varepsilon (\kappa_\varepsilon)_s\Big|_{\varepsilon=0}
 &= \partial_s(v_{ss} + v\kappa_0^2)
  + v\kappa_0(\kappa_0)_s
 = v_{sss} + v_s\kappa_0^2 + 3v\kappa_0(\kappa_0)_s
\,, \\
\partial_\varepsilon (\kappa_\varepsilon)_{ss}\Big|_{\varepsilon=0}
 &= \partial_s\!\left(
 v_{sss} + v_s\kappa_0^2 + 3v\kappa_0(\kappa_0)_s
 \right)
 + v\kappa_0(\kappa_0)_{ss}
\\ &=
 v_{ssss} + v_{ss}\kappa_0^2 + 5v_s\kappa_0(\kappa_0)_s
 + 3v(\kappa_0)_s^2 + 4v\kappa_0(\kappa_0)_{ss}
\,.
\end{align*}
Consequently,
\begin{align}
\partial_\varepsilon\!\left((\kappa_\varepsilon)_{ss} + \frac12\kappa_\varepsilon^3\right)\Big|_{\varepsilon=0}
&=
v_{ssss} + \frac52 v_{ss}\kappa_0^2 + 5v_s\kappa_0(\kappa_0)_s
 + 3v(\kappa_0)_s^2 + 4v\kappa_0(\kappa_0)_{ss}
+ \frac32\kappa_0^2(v_{ss} + v\kappa_0^2)
\notag\\
&=
v_{ssss} + \frac52(v_{s}\kappa_0^2)_s
 + v\left(3(\kappa_0)_s^2 + 4\kappa_0(\kappa_0)_{ss}
 + \frac32\kappa_0^4\right)
 \,.
 \label{EQlin}
\end{align}

A straightforward check is to linearise around the horizontal line $\gamma_\ell$.
Since $\kappa_0\equiv 0$ for $\gamma_\ell$, \eqref{EQlin} gives
\[
\partial_\varepsilon\!\left((\kappa_\varepsilon)_{ss} + \frac12\kappa_\varepsilon^3\right)\Big|_{\gamma=\gamma_\ell}(s)
= v_{ssss}\,.
\]
The spectrum contains a zero eigenvalue corresponding to vertical translations, and
then every other eigenvalue is strictly positive.

Now let us specialise to Euler's elastica. To this end, we recall (up to translation)
its arclength parametrisation from \eqref{eq:gammar} in Appendix~\ref{AA}:
\[
\gamma_r(s) = \begin{pmatrix}
    2E(\AM(s,k),k) - s\\[0.2em]
    -\sqrt2 \cn(s,k)
\end{pmatrix},
\qquad k=\frac1{\sqrt2},
\]
where $E(\,\cdot\,,k)$ is the incomplete elliptic integral of the second kind and
$K=K(k)$ is the complete elliptic integral of the first kind.
The curve $\gamma_r$ has period $4K$.
Important here is that the half-period $\gamma_r:[0,2K]\to\R^2$ satisfies the boundary
conditions of the free boundary free elastic flow, and also satisfies
\[
(\kappa_0)_{ss} + \frac12\kappa_0^3 = 0,
\]
thus is a stationary solution to the free boundary free elastic flow.

We recall from \eqref{eq:thetaprime} that
\[
\kappa_0(s) = \sqrt2 \cn(s,k),
\]
and moreover one can show that $(\kappa_0)_s^2 + \frac14\kappa_0^4 = 1$.
Using these identities in \eqref{EQlin} gives
\begin{align}
\partial_\varepsilon\!\left((\kappa_\varepsilon)_{ss} + \frac12\kappa_\varepsilon^3\right)\Big|_{\gamma=\gamma_r}(s)
 &= 
v_{ssss}(s) + 5\big(v_{s}(s)\cn^2(s,k)\big)_s
 + v(s)\left(3 - \frac54\kappa_0^4(s)\right)
\notag\\
&=
v_{ssss}(s) + 5\big(v_{s}(s)\cn^2(s,k)\big)_s
 + v(s)\left(3 - 5\cn^4(s,k)\right).
\label{EQlin2}
\end{align}
The linearised operator determining the stability or otherwise of the half-period of
the rectangular elastica is thus
\[
\SL v = 
v_{ssss}(s) + 5\big(v_{s}(s)\cn^2(s,k)\big)_s
 + v(s)\left(3 - 5\cn^4(s,k)\right)
 \,.
\]
The self-adjoint quadratic form associated to $\SL$ is
\[
\Q[v] = \int_0^{2K}
    \Big(v_{ss}^2(s)
    - 5\cn^2(s,k)\,v_s^2(s)
    + \big(3-5\cn^4(s,k)\big)v^2(s)\Big)\,
    ds
\,.
\]
Take $v = v_{-1} = \cos\!\Big(\frac{\pi s}{2K}\Big)$.
Then
\[
\Q[v_{-1}] = \int_0^{2K}
    \bigg[\bigg(\frac{\pi}{2K}\bigg)^4 
    + \big(3-5\cn^4(s,k)\big) \bigg]v_{-1}^2
    \,
    ds
    - 5\int_0^{2K}\cn^2(s,k)\bigg(\frac{\pi}{2K}\bigg)^2
    \sin^2\!\Big(\tfrac{\pi s}{2K}\Big)
    \,
    ds
\,.
\]
Numerical evaluation of this integral gives $\Q[v_{-1}] = -0.1646\ldots < 0$.

Thus the self-adjoint operator $\SL$ has a negative eigenvalue, and hence the linearised evolution $\partial_t v=-\SL v$ admits an exponentially growing mode; therefore Euler's rectangular elastica are linearly unstable for the free boundary free elastic flow.

More than a single half-period of Euler's rectangular elastica gives additional
unstable directions, corresponding to variations across half-periods. In any case,
none of them are linearly stable.

\begin{bibdiv}
\begin{biblist}

\bib{BryantGriffiths_1986}{article}{
  author={Bryant, R.},
  author={Griffiths, P.},
  title={Reduction for constrained variational problems and $\int\frac12 k^2\,ds$},
  journal={Amer. J. Math.},
  volume={108},
  date={1986},
  number={3},
  pages={525--570},
}

\bib{dall2014}{article}{
  title={A Willmore-Helfrich $L^2$-flow of curves with natural boundary conditions},
  author={Dall'Acqua, A.},
  author={Pozzi, P.},
  journal={Commun. Anal.  Geom.},
  volume={22},
  number={4},
  pages={617--669},
  year={2014},
  publisher={International Press of Boston}
}

\bib{Davis04}{article}{
AUTHOR = {Davis, T. A.},
TITLE = {Algorithm 832: {UMFPACK} {V}4.3---an unsymmetric-pattern
multifrontal method},
JOURNAL = {ACM Trans. Math. Software},
VOLUME = {30},
YEAR = {2004},
NUMBER = {2},
PAGES = {196--199},
ISSN = {0098-3500},
}

\bib{DKS02}{article}{
  title={Evolution of elastic curves in $\mathbb R^n$: Existence and computation},
  author={Dziuk, G.},
  author={Kuwert, E.},
  author={Sch\"atzle, R.},
  journal={SIAM J.  Math. Anal.},
  volume={33},
  number={5},
  pages={1228--1245},
  year={2002},
  publisher={SIAM}
}

\bib{DN25}{article}{
   author={Deckelnick, K.},
   author={N\"urnberg, R.},
   title={Finite element schemes with tangential motion for fourth order geometric curve evolutions in arbitrary codimension},
   journal={Numer. Math.},
   volume={157},
   pages={1313--1346},
   date={2025}
  }

\bib{Euler}{book}{
  author={Euler, L.},
  title={Opera Omnia},
  series={Ser. 1},
  volume={24},
  publisher={Orell Füssli},
  address={Zürich},
  date={1952},
}

\bib{LangerSinger_1984a}{article}{
  author={Langer, J.},
  author={Singer, D.},
  title={The total squared curvature of closed curves},
  journal={J. Differ. Geom.},
  volume={20},
  date={1984},
  number={1},
  pages={1--22},
}

\bib{Levien}{report}{
  title={The elastica: a mathematical history},
  author={Levien, R.},
  year={2008},
  institution={University of California, Berkeley. Technical Report No. UCB/EECS-2008-103},
}

\bib{Mandel1}{article}{
  author={Mandel, R.},
  title={Boundary value problems for Willmore curves in $\mathbb{R}^2$},
  journal={Calc. Var. Partial Differential Equations},
  volume={54},
  date={2015},
  number={4},
  pages={3905--3925},
}

\bib{Mandel2}{article}{
  author={Mandel, R.},
  title={Explicit formulas, symmetry and symmetry breaking for Willmore surfaces of revolution},
  journal={Ann. Global Anal. Geom.},
  volume={54},
  date={2018},
  number={2},
  pages={187--236},
}

\bib{PS16}{book}{
  title={Moving interfaces and quasilinear parabolic evolution equations},
  author={Pr{\"u}ss, J.},
  author={Simonett, G.},
  series = {Monographs in Mathematics },
  volume={105},
  year={2016},
  publisher={Springer}
}

\bib{Alberta}{book}{
author = {Schmidt, A.},
author = {Siebert, K. G.},
title = {Design of Adaptive Finite Element Software: The Finite Element Toolbox
{ALBERTA}},
series = {Lecture Notes in Computational Science and Engineering},
volume = {42},
publisher = {Springer-Verlag},
address = {Berlin},
ISBN = {3-540-22842-X},
year = {2005},
PAGES = {xii+315},
}

\bib{WW24}{article}{
  author={Wheeler, G.},
  author={Wheeler, V.-M.},
  title={Curve diffusion and straightening flows on parallel lines},
  journal={Commun. Anal. Geom.},
  volume={32},
  date={2024},
  number={7},
  pages={1979--2034},
}

\end{biblist}
\end{bibdiv}

\end{document}